\newcommand{\R}{\mathbb{R}}
\newcommand{\Z}{\mathbb{Z}}
\newcommand{\X}{\mathbb{X}}
\newcommand{\pd}{\mathrm{PD}}
\newcommand{\bdsp}{D}
\newcommand{\subg}[2]{G_X(\succ_{#1} #2)}
\newcommand{\isubg}[2]{G_X(\succeq_{#1} #2)}
\newcommand{\isubgv}[2]{V_X(\succeq_{#1} #2)}
\newcommand{\isubge}[2]{E_X(\succeq_{#1} #2)}
\newcommand{\OV}{\mathrm{OV}}
\newcommand{\SV}{\mathrm{SV}}
\newcommand{\SVo}{\tilde{\mathrm{SV}}}
\newcommand{\Kv}{K_{\mathrm{V}}}
\newcommand{\Ke}{K_{\mathrm{E}}}
\newcommand{\pt}{\bar{G}}
\newtheorem{theorem}{Theorem}
\newtheorem{definition}[theorem]{Definition}
\newtheorem{prop}[theorem]{Proposition}
\newtheorem{lemma}[theorem]{Lemma}
\newtheorem{fact}[theorem]{Fact}
\newtheorem{cond}[theorem]{Condition}
\newtheorem{claim}[theorem]{Claim}
\newtheorem{remark}[theorem]{Remark}
\newtheorem{theoremext}{Theorem}
\newtheorem{propext}[theoremext]{Proposition}
\title{Stable Volumes for Persistent Homology}
\author{Ippei Obayashi}
\begin{document}

\maketitle

\begin{abstract}
  This paper proposes a stable volume and a stable volume variant, referred to as a stable sub-volume, for more reliable data analysis using persistent homology. 
  In prior research, an optimal cycle and similar ideas have been proposed to identify the homological structure
  corresponding to each birth-death pair in a persistence diagram.
  While this is helpful for data analysis using persistent homology,
  the results are sensitive to noise. In this paper, stable volumes and stable sub-volumes are proposed to solve this problem.
  For a special case, we prove that a stable volume is the robust part of an optimal volume against noise.
  We implemented stable volumes and sub-volumes on HomCloud, a data analysis software package based on persistent homology, and show
examples of stable volumes and sub-volumes.
\end{abstract}

\section{Introduction}\label{sec:intro}

Topological data analysis (TDA)\cite{eh,carlsson} is a data analysis method utilizing the mathematical concept of
topology. In recent years, persistent homology (PH)\cite{elz,zc} has become one of the most important tools of TDA.
PH is mathematically formalized by using homology on a filtration. We can characterize the
geometric information by encoding the information regarding the scale of the data on the filtration.
PH has developed rapidly over the most recent decade and has been applied in a variety of areas, including natural image analysis\cite{Carlsson2008}, biology \cite{virus}, geology \cite{Suzuki2021}, 
and materials science~\cite{Hiraoka28062016,granular,YoheiONODERA201919143,Hirata2020}.

PH information can be described by a \emph{persistence diagram} (PD) or a
persistence barcode\footnote{ A persistence diagram and a persistence barcode contain the same information. The difference is how the information is visualized.}.
A PD is a scatter plot on the XY plane, where each point (called a birth-death pair) on the plot
corresponds to a homological structure in the input data.

\subsection{Persistent homology and volume-optimal cycles}

Figure \ref{fig:ph5-1} can be used to intuitively explain how a PD is determined from data.
The input data in the example constitute a pointcloud with five points, as shown in Fig. \ref{fig:ph5-1}(a). The points appear as
a regular triangle and a square. Let $a$ be the length of edges of the regular triangle and the square.
Since the five points themselves do not carry any interesting topological information,
we construct a topological structure by putting circles with values of radii $r$ as indicated in Fig. \ref{fig:ph5-1}(b)(c)(d)(e).

\begin{figure}[hbtp]
  \centering
  \includegraphics[width=\hsize]{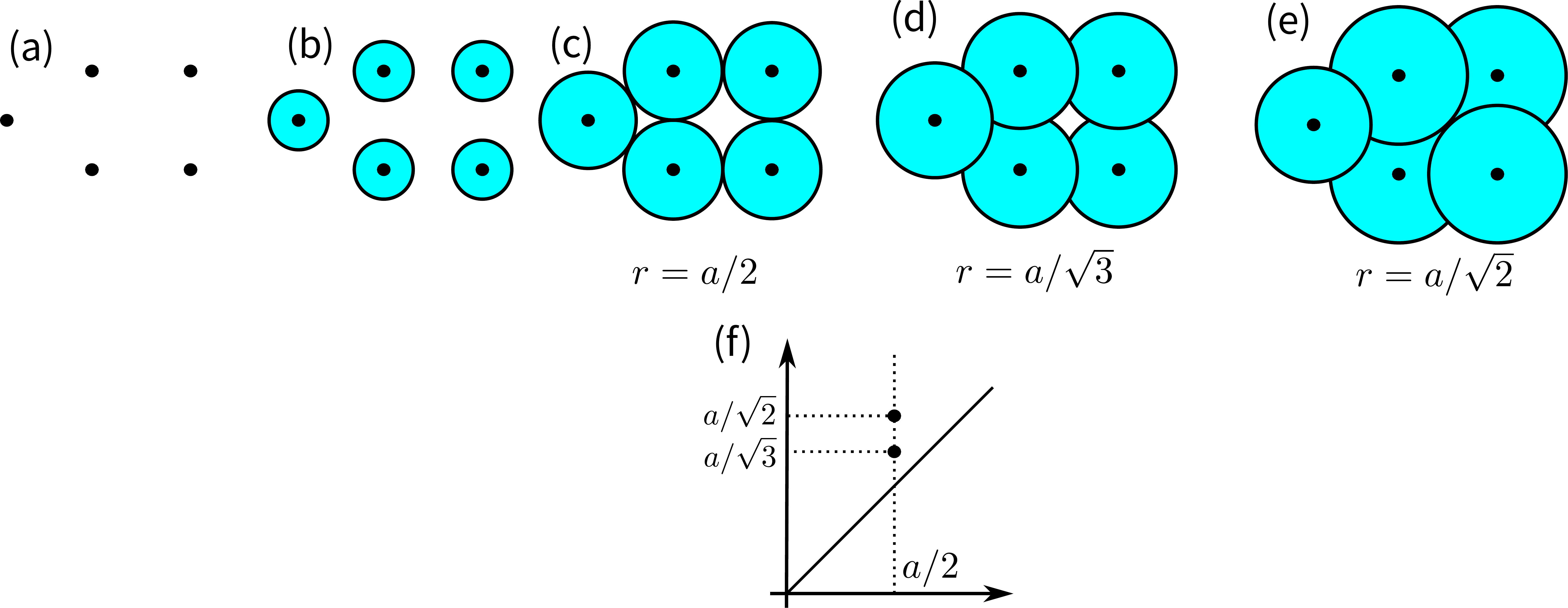}
  \caption{How to compute a Persistence diagram for 5 points
    (a) Input pointcloud (b) Pointcloud and circles with radii $r < a/2$ (c) $r = a/2$ (d) $r = a / \sqrt{3}$ (e) $r = a / \sqrt{2}$ (f) Persistence diagram }
  \label{fig:ph5-1}
\end{figure}

We now face the problem of how to determine the proper size of the circles. If the circles are too small, as in Fig.~\ref{fig:ph5-1}(b), the topology is simply equivalent to the five points. On the other hand, if the circles are too large, as in Fig.~\ref{fig:ph5-1}(e), the shape becomes acyclic, which makes it impossible to uncover any interesting topological information. PH solves this problem by considering the appearance and disappearance of homology generators associated with radii changes;
that is, PH considers the increasing sequence from (b) to (e).

In the Fig.~\ref{fig:ph5-1} example, two holes (homology generators in the 1st homology) appear at (c),
one of which disappears at (d). The other hole disappears at (e).
The pair of radii\footnote{The squares of radii are also often used in the literature of PH.}
of the appearance and disappearance of each homology generator is called a \emph{birth-death} pair,
and the multiset of all birth-death pairs is called a \emph{persistence diagram}.
In this example, the PD for the 1st homology is $\{(a/2, a/\sqrt{3}), (a/2, a/\sqrt{2})\}$. The persistence diagram
is visualized by a scatter plot or a 2D histogram (Fig.~\ref{fig:ph5-1}(f)).
The two pairs correspond to a regular triangle and a square. We can extract the geometric information of
the pointcloud in Fig.~\ref{fig:ph5-1}(a) using the PD.

It is beneficial in data analysis using PH to detect the original homological structures corresponding to each birth-death pair. This is sometimes called ``inverse analysis on PH''. Some applications of PH\cite{Hiraoka28062016, Hirata2020} already use 
inverse analysis on PH.
However, detection is not an easy task, as the representative cycle corresponding to a homology generator is not unique.

To solve this problem, various methods involving the solution of an optimization problem in homology algebra has been proposed\cite{Chen2011,Escolar2016,p-1-cycle,Chen2011,Erickson2005,optimal-Day,voc,sensor-l0-l1,Ben-thesis} in various settings.
For PH, optimal cycles\cite{Escolar2016}, volume-optimal cycles\cite{voc}, and persistent 1-cycles\cite{p-1-cycle} have been proposed. These studies give formalizations suitable to PH.
The ``tightest'' or ``minimal'' cycle is considered the best to interpret, and solving the optimization problem in homology algebra produces the tightest cycle.  As an example, consider the simplicial complex in Fig.~\ref{fig:tightest5}. The 1st homology group of the simplicial complex $H_1$ is isomorphic to $\Z$.
$z_1$ and $z_2$ give the same generator of $H_1$, but $z_1$ is superior since the $z_1$ surrounds the
hole more tightly than $z_2$. The above methods find $z_1$ by using the prescribed optimization technique.

It should be noted that there are two choices of functions to be optimized. One is the size of the cycle; the other is the internal volume of the cycle. In Fig.~\ref{fig:tightest5}, $z_1$ is tighter than $z_2$ in both senses; however, it is possible for the solutions to the two optimization problems to differ. The better choice depends on the problem. This paper mainly uses the internal volume as an optimization function since it is suitable to define stable volumes in Section~\ref{sec:stable-volume-n-1}.

\begin{figure}[hbpt]
  \centering
  \includegraphics[width=0.3\hsize]{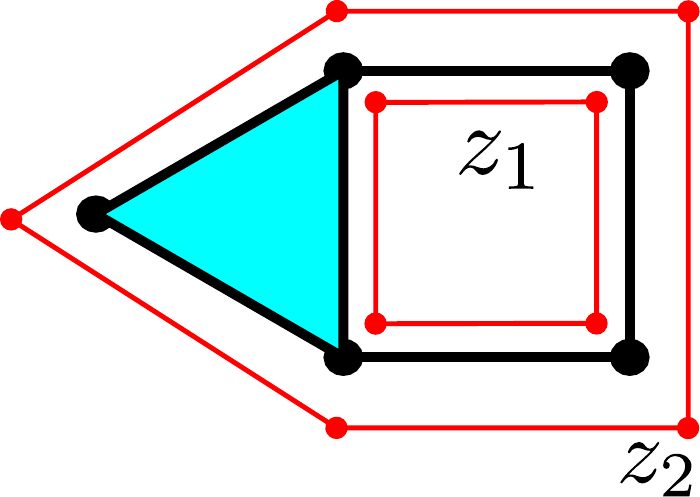}
  \caption{A simplicial complex with one hole}
  \label{fig:tightest5}
\end{figure}

\subsection{Problems of homology optimization}

The existing methods have the following two problems:
\begin{itemize}
\item The methods do not always give the minimal building blocks of the data
\item The result is unstable to noise
\end{itemize}
The example in Fig.~\ref{fig:ph5-1} can be used to demonstrate these problems.

The pointcloud in Fig.~\ref{fig:ph5-1} has two minimal building blocks, a triangle and a square.
However, the optimization technique sometimes fails to find these minimal building blocks when a small noise is added.
Figure~\ref{fig:ph5-2}(a) shows the pointcloud in Fig.~\ref{fig:ph5-1}(a) when a small noise is added.
This figure also shows circles, and, as can be seen, a pentagon appears before either a triangle or a square appears.
By applying the homology optimization technique to the data in Fig.~\ref{fig:ph5-2}(a), we produce a triangle and
a pentagon, as shown in Fig.~\ref{fig:ph5-2}(b). On the other hand, consider the pointcloud in Fig.~\ref{fig:ph5-2}(c).
The pointcloud here is also very close to that in Fig.~\ref{fig:ph5-1}(a); however, while a square appears first, the homology optimization technique gives a square and a triangle, as shown in Fig.~\ref{fig:ph5-2}(d).
\begin{figure}[hbpt]
  \centering
  \includegraphics[width=0.8\hsize]{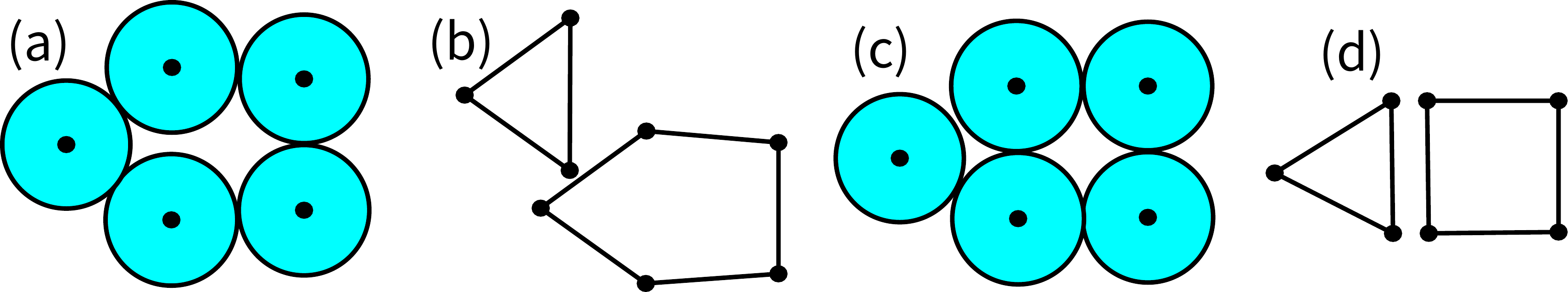}
  \caption{Two types of optimization results}
  \label{fig:ph5-2}
\end{figure}

The example demonstrates the following problems: (1) A small noise can change the result, and (2) The optimization technique sometimes fails to give minimal building blocks of the data. The first problem is related to the reliability of the analysis; the second problem is related to the interpretability of the result.

Previous studies~\cite{Cohen-Steiner2007, Chazal2009, Bauer2014, Lesnick2015} have proved the stability theorem for persistence diagrams.
The literature indicates that a PD is continuously changed by a small noise in the input data if we consider reasonable metrics.
The stability of other PH outputs has also been studied in the context of machine learning and PH\cite{JMLR:v16:bubenik15a, JMLR:v18:17-317, JMLR:v18:16-337} or pointcloud summary method\cite{kurlin2015one,smith2021skeletonisation}.
Although these types of stability  play an important role in the study of PH, such a stability theorem
does not hold for the solution of homology optimization problems.

Both in Fig.~\ref{fig:ph5-2}(a) and Fig.~\ref{fig:ph5-2}(c), the minimal building blocks are a triangle and a square, and it is more natural that an inverse analysis technique gives a triangle and a square for both data. Therefore, we would hope to detect such structures even if a small noise is added.

These problems are practically significant, especially when we analyze crystalline structures using PH. We can demonstrate the problems using the synthetic crystalline data in Fig.~\ref{fig:crystalline}.
Here, the pointcloud consists of 27 points, arranged in a 3x3x3 cubical lattice (Fig.~\ref{fig:crystalline}(a)). The distance between two vertices on the cube is $1$. A small noise is added to the pointcloud. Figure~\ref{fig:crystalline}(b) shows the 1st PD of the pointcloud. As can be seen here, some birth-death pairs are concentrated around $(1/4, 1/\sqrt{2}) \approx (0.5, 0.7)$. These pairs correspond to 1x1 squares in the lattice. By applying volume-optimal cycles\cite{voc}, we found some loops corresponding to the pairs in Fig.~\ref{fig:crystalline}(c) and (d). Some of the cycles shown in (c) are squares, as expected; however, others in (d) are larger structures resembling chairs. These larger structures were detected with the same mechanism in Fig.~\ref{fig:ph5-2}.

\begin{figure}[hbt]
  \centering
  \includegraphics[width=0.9\hsize]{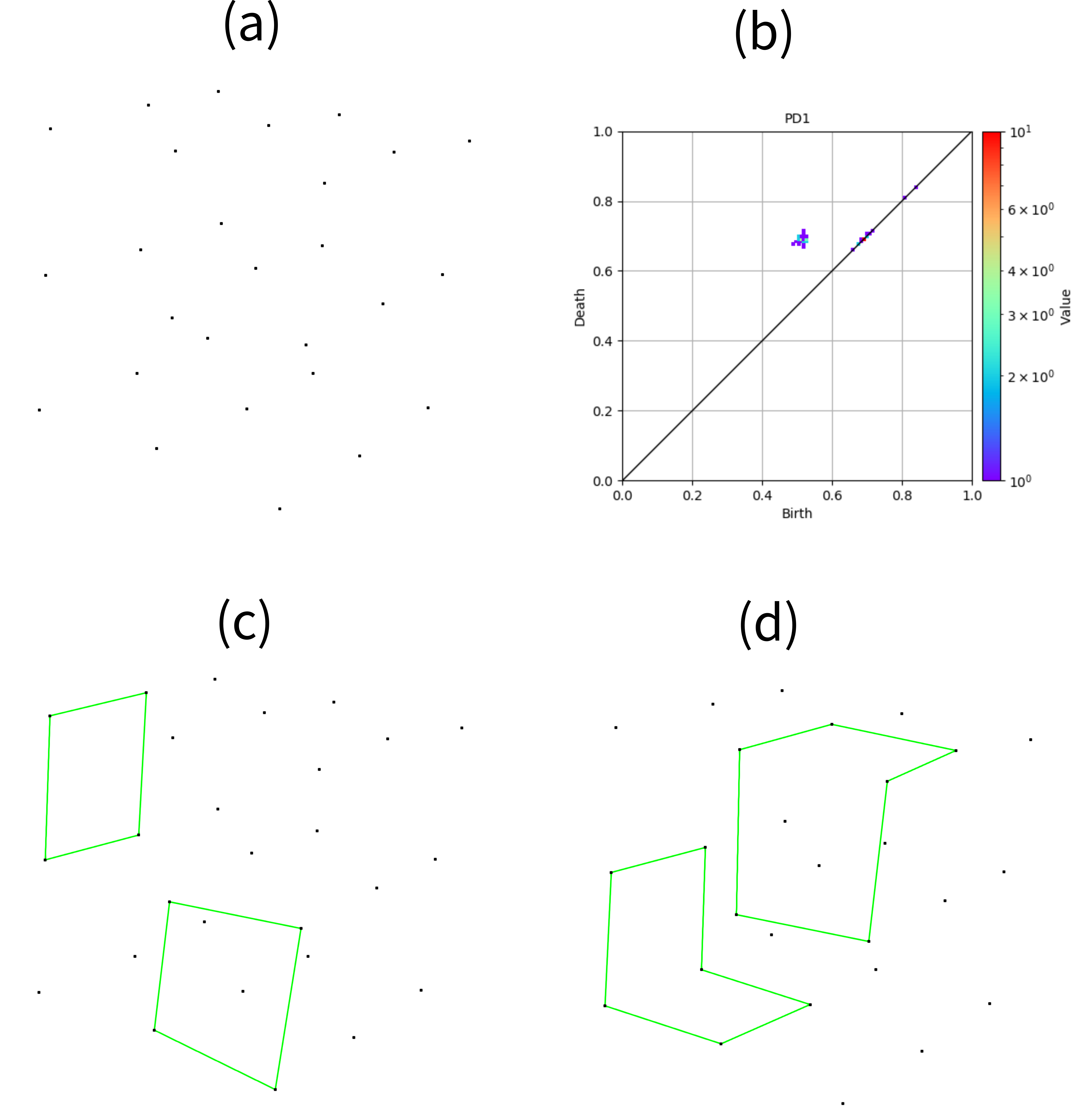}
  \caption{Optimal volumes for 3x3x3 cubical lattice. (a) The pointcloud, (b) 1st PD, (c) Optimal volumes of two pairs, (d) Optimal volumes of other two pairs}
  \label{fig:crystalline}
\end{figure}

The purpose of this paper is to present a new method to solve these problems. That is, we propose
a method to find minimal building blocks that is robust to noise.

\subsection{Statistical approach in previous research}\label{subsec:intro-stat-approach}
Bendich, Bubenik, and Wagner~\cite{stabilization} proposed a statistical approach for the problem,
which can be outlined as follows:
\begin{enumerate}
\item Computing a PD from the input data and choosing a birth-death pair to analyze
\item Adding a small noise to the input data, computing a PD, and applying inverse analysis  
\item Repeating (2) multiple times
\item Computing an average of the results of the inverse analysis
\end{enumerate}
We can flexibly change the method of inverse analysis and the way in which the average of the results is computed.
We can combine the statistical approach with optimal or volume-optimal cycles are as follows:

\begin{algorithm}
  \caption{Computing persistence trees by the merge-tree algorithm.}\label{alg:volopt-hd-compute}
  \label{alg:stat}
  \begin{enumerate}
  \item Computing a PD from the input pointcloud and choosing a birth-death pair to analyze
  \item Adding a small noise to the input data and computing the PD and optimal cycles or volume-optimal cycles
    of the corresponding birth-death pair
  \item Repeating (2) multiple times
  \item For each point in the pointcloud, computing the frequency of $(\text{the point}) \in (\text{optimal cycle})$
  \end{enumerate}
\end{algorithm}

By applying this method to the birth-death pair $(1/2, 1/\sqrt{2})$ in Fig.~\ref{fig:ph5-1}, we produce the result shown in Fig.~\ref{fig:ph5-3}. The result indicates that the four points on the square are robust to noise and that the leftmost point is less robust. This result is consistent with the fact that the pair $(1/2, 1/\sqrt{2})$ corresponds to a square.

\begin{figure}[hbpt]
  \centering
  \includegraphics[width=0.2\hsize]{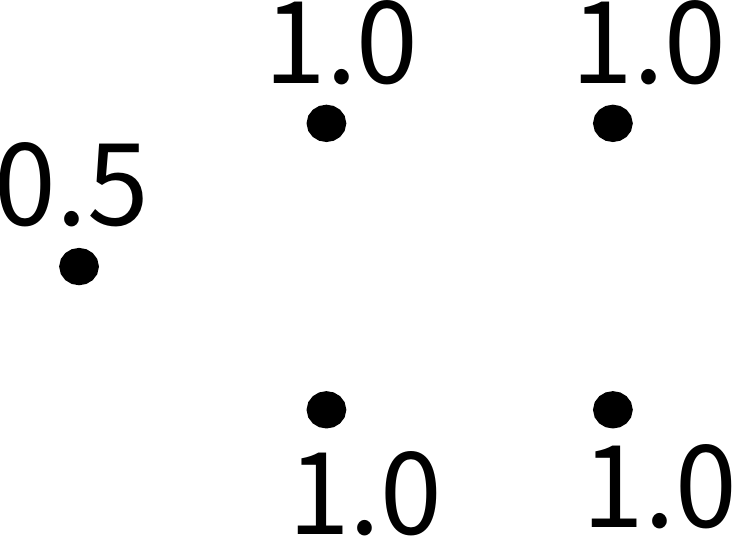}
  \caption{Result of applying the statistical approach to the birth-death pair $(1/2, 1/\sqrt{2})$}
  \label{fig:ph5-3}
\end{figure}

The stability of the method in a probabilistic sense was also proved by Bendich, Bubenik, and Wagner. Notably, the method provides a more reliable inverse analysis.
The idea is quite clever, simple, and easy to implement. However,
the method has a high computation cost since it requires the user to compute PDs and optimal cycles a large number of times. In \cite{stabilization}, the authors repeat the computation of the generators 1,000 or 10,000 times.
We suspect that fewer repetitions may be sufficient, but ten or a hundred trials will be necessary.
Multiple trials and errors are typically needed to tune the noise bandwidth in order to apply the method, and thus the cost is not ignorable.

\subsection{Reconstructed shortest cycles in Ripserer.jl}
The ``reconstructed shortest cycles'' functionality of Ripserer.jl\cite{cufar2020ripserer}\footnote{\url{https://github.com/mtsch/Ripserer.jl}} by Čufar
gives another solution of the problem.
The functionality reconstructs the tightest 1-cycle using the shortest path algorithm and the representative of persistent \emph{cohomology}.
The functionality accepts a noise bandwidth parameter and computes a tighter loop.

The advantage of reconstructed shortest cycles is its efficiency.
Since it uses the shortest path algorithm, the computational complexity is small.
However, reconstructed shortest cycles can be applied only to 1st PH since it uses the shortest path algorithm.
Another disadvantage is the lack of mathematical justification of the functionality.
Now\footnote{Dec. 4, 2021} the functionality is declared as experimental and gives no mathematical documentation.
We explain the algorithm in Appendix~\ref{sec:rsc}.

\subsection{Results}

In this paper, we propose stable volumes and a variant of stable volumes, called stable sub-volumes.
The proposed method is based on the volume-optimal cycles and optimal volumes shown in \cite{voc}.
Stable volumes produce minimal building blocks with a lower computation cost than the statistical approach. 
Below, we provide an outline of stable volumes.
The exact formalization will be introduced in Section~\ref{sec:preliminaries} and further developed in subsequent sections. 

Let $X$ be a simplicial complex and $X^{(k)}$ the set of all $k$-simplices of $X$. We define a \emph{level function} as follows.
\begin{definition}
  $\hat{r} : X \to \R$ is a \emph{level function} if $\sigma \subsetneq \sigma'$ implies $\hat{r}(\sigma) \leq \hat{r}(\sigma')$.
\end{definition}

Here, for simplicity, we assume that a level function $\hat{r}$ can distinguish every simplex; that is, we assume the following:
\begin{equation}\label{eq:distinguishability}
 \sigma \not = \sigma' \text{ implies } \hat{r}(\sigma) \not = \hat{r}(\sigma'). 
\end{equation}

From the definition of the level function, $X_{\hat{r}, t} = \{\sigma \mid \hat{r}(\sigma) < t \}$ is a subcomplex of $X$ and $t < t'$ implies $X_{\hat{r}, t} \subseteq X_{\hat{r}, t'}$. This means that $\{X_{\hat{r}, t}\}_{t \in \R}$ is a filtration of simplicial complexes. From the filtration, the $k$th PD, $\pd_k(\{X_{\hat{r}, t}\})$, is defined. Any birth-death pair in the diagram can be written as $(b, d) = (\hat{r}(\tau_b), \hat{r}(\omega_d))$, where $\tau_b$ is a $k$-simplex and $\omega_d$ is a $(k+1)$-simplex due to the theory of PH.

The \emph{optimal volume} of the pair $(b, d)$ is defined by the solution to the following minimization problem:
\begin{equation}
  \begin{aligned}
    &\text{minimize } \|z\|_0, \text{ s. t. } \\
    z &= \omega_d + \sum_{\omega \in \mathcal{F}_{k+1}} \alpha_\omega \omega \in C_{k+1}(X; \Bbbk), \\
    \tau^*(\partial z) &= 0 \text{ for every } \tau \in \mathcal{F}_k, \\
    \tau_b^*(\partial z) & \not = 0,
  \end{aligned}
\end{equation}
where $C_{k+1}(X; \Bbbk)$ is a $(k+1)$th chain complex whose coefficient field is $\Bbbk$, $\mathcal{F}_k = \{ \sigma \in X^{(k)} \mid \hat{r}(\tau_d) < \hat{r}(\sigma) < \hat{r}(\sigma_d) \}$, $\tau^*$ is an element of the dual basis of the cochain complex $C^k(X; \Bbbk)$, and $\|z\|_0 = \{\omega \mid \alpha_\omega \not = 0 \}$ is the $\ell^0$ norm of $z$\footnote{The $\ell^0$ norm is, in fact, a norm since the triangle inequality does not hold. However, this is often called the $\ell^0$ norm in the context of machine learning and sparse modeling.}. The optimal volume minimizes the internal volume surrounded by the loop. The optimal volume is defined on $\Bbbk = \Z / 2\Z$; however, in practice, optimal volumes are determined by using linear programming with a change of coefficient field from $\Z / 2\Z$ to $\R$ and an approximation of the optimization of the $\ell^0$ norm by the $\ell^1$ norm. For optimal volume $z$, $\partial z$ is called a \emph{volume-optimal cycle}. Reference~\cite{voc} shows that optimal volumes and volume-optimal cycles have good mathematical properties and provide good representations of a birth-death pair on a PD.

The \emph{stable volume} for the pair $(b, d)$ with a noise bandwidth parameter $\epsilon$ is defined by the solution to the following minimization problem:
\begin{equation}
  \begin{aligned}
    &\text{minimize } \|z\|_0, \text{ s. t. } \\
    z &= \omega_d + \sum_{\omega \in \mathcal{F}_{\epsilon, k+1}} \alpha_\omega \sigma \in C_{k+1}(X; \Bbbk), \\
    \tau^*(\partial z) &= 0 \text{ for every } \tau \in \mathcal{F}_{\epsilon, k}, \\
  \end{aligned}
\end{equation}
where $\mathcal{F}_{\epsilon, k} = \{ \sigma \in X^{(k)} \mid \hat{r}(\tau_d) + \epsilon \leq \hat{r}(\sigma) < \hat{r}(\omega_d)\}$. This is quite similar to the definition of optimal volumes. The following theorem states that the stable volume is considered to be the ``robust part'' against noise.

\begin{theorem}\label{thm:optimal-stable-codim1}
  Let $X$ be an $n$-dimensional simplicial complex, and let $\hat{r}$ be a level function. Assume that $X$ is embedded in $\R^n$. We consider $\Bbbk = \Z/2\Z$ as a coefficient field. For any $(b, d) \in \pd_{n-1}(\{X_{\hat{r}, t}\}_{t \in \R})$, $\OV(\hat{r}, b, d)$ and $\SV_{\epsilon}(\hat{r}, b, d)$  are defined as follows:
  \begin{equation}
    \begin{aligned}
      \OV(\hat{r}, b, d)  = & \{\omega \in X^{(n)} \mid \omega^*(z) \not = 0, \text{ $z$ is an optimal volume of $(b, d)$} \}, \\
      \SV_{\epsilon}(\hat{r}, b, d) =& \{\omega \in X^{(n)} \mid \omega^*(z) \not = 0, \ z \text{ is a stable volume of } (b, d) \\
      & \text{ with noise bandwidth parameter } \epsilon  \}.
    \end{aligned}
  \end{equation}
  Then the following holds for sufficiently small $\epsilon > 0$.
  \begin{equation}
    \begin{aligned}
      \SV_\epsilon(\hat{r}, b, d) = \cap_{\hat{q} \in \mathcal{R}_{\epsilon}} \OV(\hat{q}, b_{\hat{q}, \omega_d}, \hat{q}(\omega_d)),
    \end{aligned}
  \end{equation}
  where $\omega_d$, $\mathcal{R}_\epsilon$ and $b_{\hat{q}, \omega_d}$ are defined as follows:
  \begin{align}
    d &= \hat{r}(\omega_d), \\
      \mathcal{R}_\epsilon & = \{\hat{q}:
      \text{a level function satisfying the condition
        \eqref{eq:distinguishability}} \mid \nonumber \\
      & \ \  \|\hat{q} - \hat{r}\|_{\infty} < \epsilon/2,\text{ and } \nonumber \\
      & \ \ \hat{r}(\sigma) < \hat{r}(\omega_d) \text{ implies } \hat{q}(\sigma) < \hat{q}(\omega_d) \nonumber \\
      & \},  \\
    b_{\hat{q}, \omega_d} &: \text{ a birth time paired with } \hat{q}(\sigma_d) \text{ in } PD_{n-1}(\{X_{\hat{q},t}\}_{t \in \R}\}.
  \end{align}
\end{theorem}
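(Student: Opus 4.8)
The plan is to translate both optimization problems into the language of regions and cuts that the codimension-one embedding supplies, and then to prove the asserted set equality by two inclusions. Since $X$ is $n$-dimensional and embedded in $\R^n$, a $\Z/2\Z$-valued $n$-chain $z$ with $\omega_d^*(z)=1$ is the indicator of a set of $n$-simplices containing $\omega_d$, and the constraint $\tau^*(\partial z)=0$ says exactly that the $(n-1)$-face $\tau$ is not a wall of that region (both or neither of its two cofaces lie in the region). Under this dictionary the optimal volume for a level function $\hat q$ is the minimum-cardinality region $S\ni\omega_d$ whose bounding walls all lie below the paired birth, i.e. whose boundary avoids $\mathcal{F}_k^{\hat q}=\{\sigma\mid b_{\hat q}<\hat q(\sigma)<\hat q(\omega_d)\}$, while the stable volume is the minimum-cardinality region whose boundary avoids $\mathcal{F}_{\epsilon,k}$. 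First I would record, citing the structural results of \cite{voc}, that in this codimension-one situation each problem has a canonical smallest feasible region; this reduces $\OV$ and $\SV_\epsilon$ from unions over minimizers to the supports of these minimal regions and yields the basic monotonicity principle that forbidding fewer walls produces a smaller region.

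The technical heart is a dictionary between the $\epsilon$-eroded constraint set of the stable problem and the constraint sets of the perturbed optimal problems, namely that for sufficiently small $\epsilon$
\[
\mathcal{F}_{\epsilon,k}=\bigcap_{\hat q\in\mathcal{R}_\epsilon}\mathcal{F}_k^{\hat q}.
\]
The inclusion $\subseteq$ is a direct estimate: if $\hat r(\sigma)\ge b+\epsilon$ then $\|\hat q-\hat r\|_\infty<\epsilon/2$ gives $\hat q(\sigma)>b+\epsilon/2$, while $\hat r(\sigma)<d$ gives $\hat q(\sigma)<\hat q(\omega_d)$ by the monotonicity clause built into $\mathcal{R}_\epsilon$. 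To close this I need the paired birth to move little, and I would argue this directly rather than through bottleneck stability: the clause ``$\hat r(\sigma)<d$ implies $\hat q(\sigma)<\hat q(\omega_d)$'' forces the subcomplex below $\omega_d$ to be the same set for $\hat r$ and $\hat q$, so $\omega_d$ remains a death simplex under $\hat q$, is paired inside this common subcomplex, and hence $b_{\hat q}<b+\epsilon/2$ once $\epsilon$ is below the smallest relevant level gap.

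The reverse inclusion $\supseteq$ is where the real work lies, and it also powers the harder half of the theorem. Given a borderline face $\sigma$ with $b\le\hat r(\sigma)<b+\epsilon$, I must exhibit an explicit $\hat q\in\mathcal{R}_\epsilon$ that drags $\sigma$ below the new birth, so that $\sigma\notin\mathcal{F}_k^{\hat q}$. The difficulty is that $\hat q$ must simultaneously stay within $\epsilon/2$ of $\hat r$, remain a genuine level function (so that lowering $\hat q(\sigma)$ is compatible with the values on its faces and cofaces), satisfy the distinguishability condition, and preserve the below-$\omega_d$ order so that $\hat q\in\mathcal{R}_\epsilon$ and the pairing survives. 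I would build such $\hat q$ by lowering $\sigma$ together with a chosen down-set of its faces by a common small amount and then applying a generic tie-breaking perturbation to restore distinguishability. I expect this realization step to be the main obstacle, precisely because it must convert a combinatorial requirement on one wall back into a valid, monotone, order-preserving level function inside the $\epsilon/2$ budget.

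Finally I would assemble the inclusions. Since $\hat r\in\mathcal{R}_\epsilon$ and each $\hat q$-optimal region forbids at least the walls the stable problem forbids, the dictionary and monotonicity give $\SV_\epsilon\subseteq\OV(\hat q,b_{\hat q},\hat q(\omega_d))$ for every $\hat q$, hence $\SV_\epsilon\subseteq\bigcap_{\hat q}\OV(\cdot)$. For the opposite inclusion I would take $\omega\notin\SV_\epsilon$, fix a minimal stable cut separating $\omega$ from the $\omega_d$-side that uses only walls of level below $b+\epsilon$, and apply the construction above to the finitely many borderline walls appearing in that cut to produce a single $\hat q\in\mathcal{R}_\epsilon$ for which the same cut is admissible; then the $\hat q$-optimal region omits $\omega$ as well, so $\omega\notin\bigcap_{\hat q}\OV(\cdot)$. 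Throughout, ``sufficiently small $\epsilon$'' will mean smaller than the persistence $d-b$ and than the minimal gap between distinct $\hat r$-values, which is what keeps the pairing of $\omega_d$ and the identity of the below-$\omega_d$ subcomplex rigid across all $\hat q\in\mathcal{R}_\epsilon$.
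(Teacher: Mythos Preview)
Your high-level architecture---translate to the dual graph, prove two inclusions, and for the easy one use a ``birth-stability'' estimate $\hat q(\tau_{q,\omega_0})<\hat r(\tau_0)+\epsilon/2$---matches the paper's Claim~\ref{claim:easy} and Lemma~\ref{lem:birth-stability} closely, and that part is fine.

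The gap is in the hard direction. Your proposed perturbation \emph{only lowers} the offending walls (and their faces). But $\|\hat q-\hat r\|_\infty<\epsilon/2$ caps any one-sided move at $\epsilon/2$, while a borderline wall $\sigma$ can have $\hat r(\sigma)$ anywhere in $[b,b+\epsilon)$. If $\hat r(\sigma)=b+\epsilon_*$ with $\epsilon_*\in[\epsilon/2,\epsilon)$, lowering gives $\hat q(\sigma)>b+\epsilon_*-\epsilon/2\geq b$, so $\sigma$ never drops below an unchanged birth. The paper solves this by simultaneously \emph{raising} a carefully chosen set of edges by $\eta$ and \emph{lowering} the rest by $\eta$, with $\epsilon_*/2<\eta<\epsilon/2$; this doubles the usable budget and is why the construction in \eqref{eq:def-hatq-case-1} and \eqref{eq:def-hats} moves both ways.

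Even granting the budget, you do not control the \emph{new} birth simplex, and without that you cannot conclude that your cut lies below it. The paper's construction is not ``lower the cut''; it raises an entire connected substructure---either a path $P$ from $\omega_0$ to the $\tau_0$-endpoint together with $\tau_0$ and the external component $K(\subg{r}{\tau_0},\omega_1)$ (Case~1), or the $\tilde\omega$-side together with a path $T$ to that endpoint (Case~2)---precisely so that after perturbation one can \emph{prove} $\tau_{q,\omega_0}=\tau_0$ (Fact~\ref{fact:case-1-tau-q-omega-0}) or $\tau_{s,\omega_0}=\tilde\tau$. The Case~1/Case~2 dichotomy is topological: whether $\omega_0$ can reach the $\tau_0$-endpoint inside $\subg{r}{\tau_0}$ without entering $K(\subg{r}{\tilde\tau},\tilde\omega)$. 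Your outline does not detect this dichotomy, and your ``lower the down-set'' move does not supply the raised path that pins down the birth. Finally, note that the paper works with the single separating edge $\tilde\tau$ coming from the persistence tree, not an arbitrary cut; this is what makes the two-case construction tractable.
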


We can regard $\mathcal{R}_\epsilon$ as the set of all possible level functions with small noise $\epsilon/2$. Then the theorem states that the stable volume is the common area of all possible optimal volumes under small noise. This theorem does not hold for the PH of another dimension, but it suggests that stable volumes give a better result than optimal volumes.

For practical applications, assumption \eqref{eq:distinguishability} is overly strict. Vietoris-Rips, \v{C}ech, and alpha filtrations do not satisfy the condition. Therefore, we introduce an order with levels in Section~\ref{subsec:order-with-level} and formalize all of them using the order with levels.

As the stable volume method has already been implemented in HomCloud~\cite{homcloud},  stable volumes can now be used to analyze the data.

We can demonstrate stable volumes by using the same input data as in Fig.~\ref{fig:crystalline}. Figure~\ref{fig:crystalline-2} shows the
optimal volumes (Fig.~\ref{fig:crystalline-2}(a)) and stable volumes (Fig.~\ref{fig:crystalline-2}(b)) of the same two birth-death pairs. The birth-death pairs correspond to 1x1 squares in the lattice points, but Fig.~\ref{fig:crystalline-2}(a) shows larger loops than expected. In contrast Fig.~\ref{fig:crystalline-2}(b) shows the expected 1x1 squares. This example suggests that stable volumes give better results and help promote a better intuitive understanding of PDs. 

\begin{figure}[hbpt]
  \centering
  \includegraphics[width=0.7\hsize]{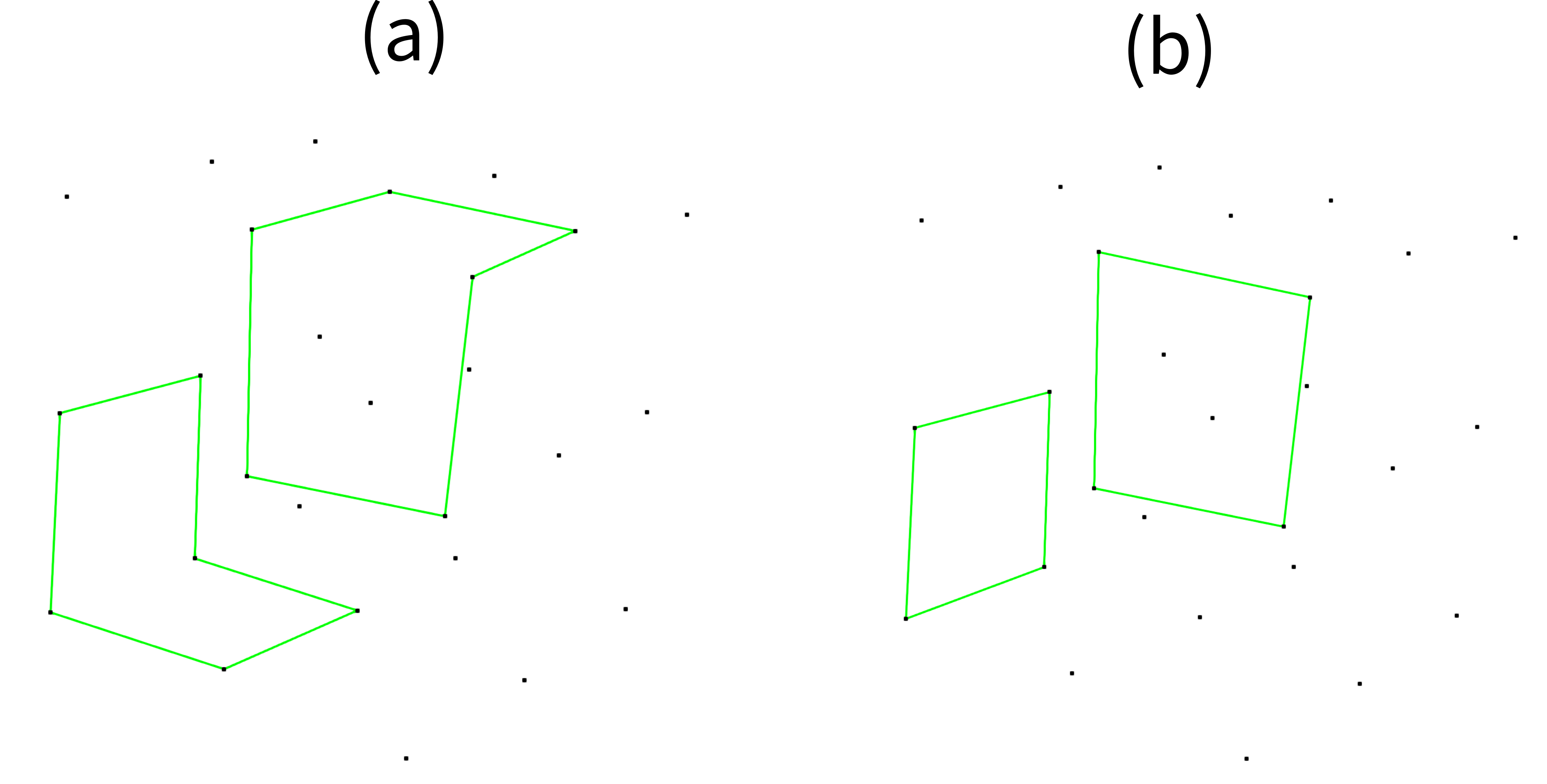}
  \caption{Stable volumes computed for 3x3x3 cubical lattice}
  \label{fig:crystalline-2}
\end{figure}

Figure~\ref{fig:lattice-2d} shows another example of stable volume. Here, we consider 2D lattice points with defects (Fig.~\ref{fig:lattice-2d} (a)). The configuration of the points is somewhat perturbed from the complete lattice, and some points are removed randomly. Figure~\ref{fig:lattice-2d}(b) shows the 1st PD of the points, and Fig.~\ref{fig:lattice-2d}(c) shows the optimal volume of (0.496, 4.371). Figure~\ref{fig:lattice-2d}(d) shows the stable volume of the same birth-death pair. The optimal volume in Fig.~\ref{fig:lattice-2d}(d) seems to surround the holes in the pointcloud more naturally than is the case in Fig.~\ref{fig:lattice-2d}(c).

\begin{figure}[htbp]
  \centering
  \includegraphics[width=0.95\hsize]{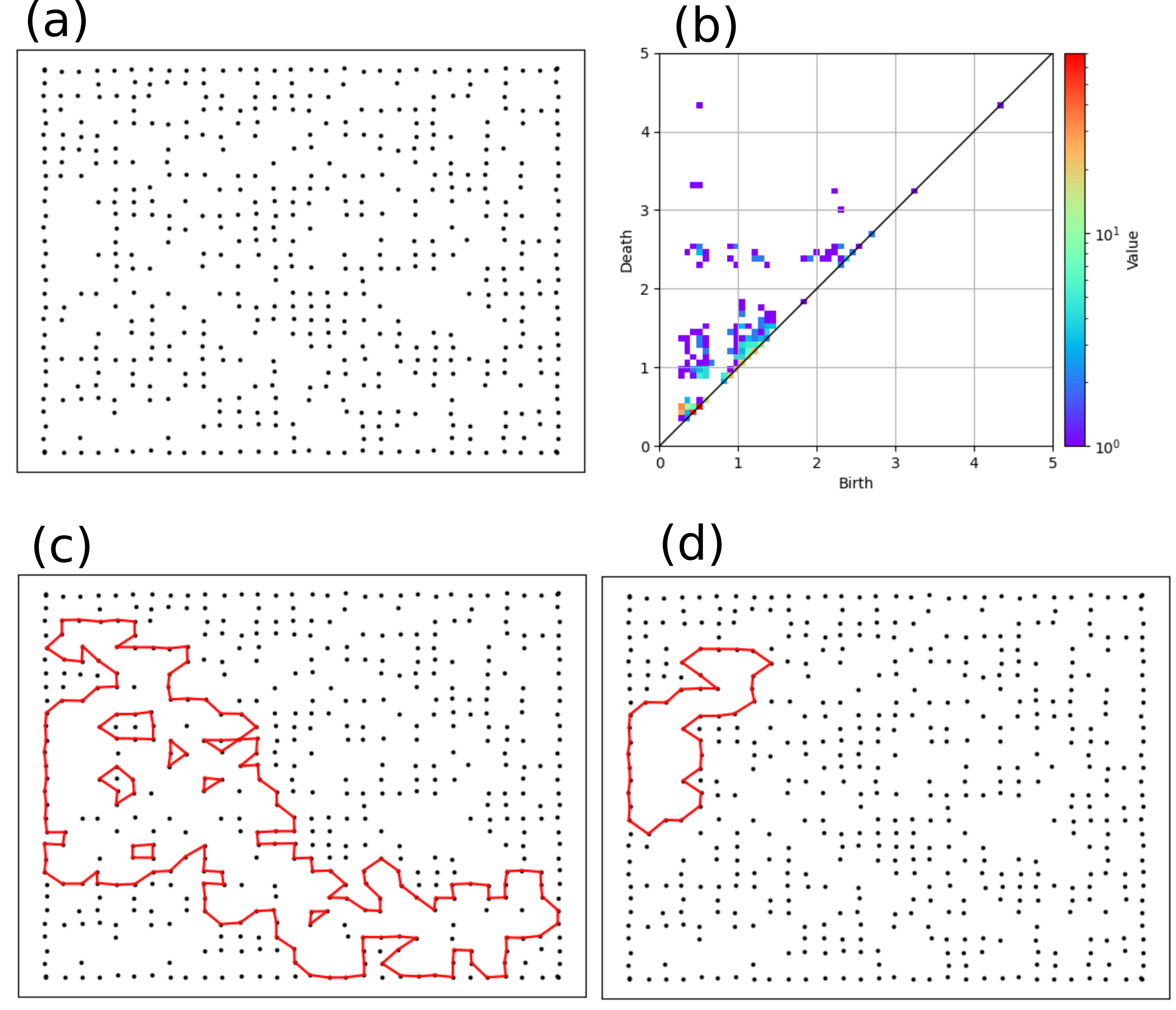}
  \caption{An optimal volume and a stable volume for a 2D lattice with defects. (a) The input pointcloud, (b) 1st PD, (c) The optimal volumes of (0.496, 4.371), (d) The stable volume of the same birth-death pair}
  \label{fig:lattice-2d}
\end{figure}

Additional demonstrations of stable volumes using synthetic and real data are given in Section~\ref{sec:example}.

% \subsection{Related works}

% Several papers\cite{Chen2011,optimal-Day,p-1-cycle} treat the computational complexity
% of homology optimization problems, showing that we sometimes encounter
% NP-hard problems when considering such problems.

% For PH, optimal cycles\cite{Escolar2016}, volume-optimal cycles\cite{voc}, and persistent 1-cycles\cite{p-1-cycle} have been proposed. These studies give formalizations suitable to PH.

% The stability of the output of PH has been widely studied. The stability of
% persistence diagrams was first studied by Cohen-Steiner, Edelsbrunner, and Harer\cite{Cohen-Steiner2007}. 
% Subsequent studies such as \cite{Chazal2009,Bauer2014, Lesnick2015} refined and extended the theory.

% Bendich, Bubenik, and Wagner~\cite{stabilization} proposed a statistical approach, as described in Section~\ref{subsec:intro-stat-approach}. Their method can be applied to the frequency of points and the spatial distribution of points. They also prove the stability of the method in a probabilistic sense and show various illustrative applications to synthetic and real data.

\subsection{Organization of the paper}

The remainder of the paper is organized as follows:
Section \ref{sec:preliminaries} introduces the concept of PH, optimal volumes, and volume-optimal cycles.
Section \ref{sec:stable-volume-n-1} defines stable volumes for the $(n-1)$th PH embedded in $\R^n$.
In this section we also discuss the limitation of stable volumes.
Section \ref{sec:stable-volume-optimization} shows an alternative formalization of stable volumes using mathematical optimization.
Section~\ref{subsec:stable-volume-optimization-n-1} provides a proof that the two formalizations are consistent for the $(n-1)$th PH when the simplicial complex is embedded in $\R^n$.
Section~\ref{subsec:stable-volume-optimization-general} extends the formalization to other cases.
Section \ref{sec:implementation} discusses the software implementation of stable volumes. 
Section~\ref{sec:example} gives examples using synthetic and real data.
This section also provies the comparison with previous methods, the statistical method and the reconstructed shortest cycles.
Section~\ref{sec:parameter-tuning} discusses the way in which the noise bandwidth parameter can be tuned.
Section~\ref{sec:comparison} compares stable volumes and sub-volumes.
Section \ref{sec:conclusion} summarizes the paper and offers concluding remarks. In this section we also discuss the differences between the proposed method and the methods previous works.

\section{Persistent homology, optimal volumes, and volume-optimal cycles}\label{sec:preliminaries}

In this section,
we introduce the mathematical concepts of 
PH, persistence diagrams, optimal volumes,
and volume-optimal cycles, which provide the foundation for our discussion.

\subsection{Persistent homology}

PH is defined on a filtration of topological spaces. Let $X$ be a finite simplicial complex\footnote{To simplify the explanation in this paper, we use a simplicial complex. However, the content of the paper can be easily extended to cubical or cell complexes.} and
\begin{equation}
  \X: \emptyset = X_0 \subsetneq X_1 \subsetneq \cdots \subsetneq X_N = X
\end{equation}
be a filtration of subcomplexes of $X$. The $k$th \emph{persistent homology} $H_k(\X; \Bbbk)$
with a coefficient field $\Bbbk$ is defined as 
\begin{equation}
  H_k(\X; \Bbbk): H_k(X_0; \Bbbk) \to H_k(X_1; \Bbbk) \to \cdots \to H_k(X_N; \Bbbk),
\end{equation}
where $H_k(X_i; \Bbbk)$ is the $k$th homology $\Bbbk$-vector space of $X_i$ and
$\to$ is the linear map induced by the inclusion maps. The fundamental theorem of PH is as follows.
\begin{theoremext}\label{thm:ph-fundamental}
  There exists a unique decomposition of $H_k(\X; \Bbbk)$ ,
  \begin{equation}
    H_k(\X; \Bbbk) \simeq \bigoplus_{m=1}^L I(b_m, d_m), 
  \end{equation}
  where $0 < b_m < d_m \leq \infty$, $\Bbbk \to \Bbbk$ is an identity map, $0 \to \Bbbk, \Bbbk \to 0$ are zero maps, and $I(b, d)$ are as shown below:
  \begin{align}
    I(b, d) = & 0 \to \cdots \to 0 \to \overbrace{\Bbbk}^{b\text{th}} \to \cdots \to \overbrace{\Bbbk}^{(d-1)\text{th}} \to \overbrace{0}^{d\text{th}}
    \to \cdots \to 0, \ \  \text{ if } d_m \not = \infty, \\
    I(b, d) = & 0 \to \cdots \to 0 \to \overbrace{\Bbbk}^{b\text{th}} \to \cdots \to \Bbbk, \ \  \text{ if } d_m = \infty.
  \end{align}
  
\end{theoremext}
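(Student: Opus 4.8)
The plan is to recognize $H_k(\X;\Bbbk)$ as a finite-dimensional representation of a type-$A$ quiver and then combine Krull--Schmidt uniqueness with a classification of the indecomposable summands. The diagram $H_k(X_0;\Bbbk) \to H_k(X_1;\Bbbk) \to \cdots \to H_k(X_N;\Bbbk)$ is precisely a representation of the linearly oriented quiver $A_{N+1}$: one vertex per filtration step, carrying the vector space $H_k(X_i;\Bbbk)$, and the induced inclusion maps on the arrows. Since $X$ is a finite simplicial complex, each $H_k(X_i;\Bbbk)$ is finite-dimensional, so we are working inside the category of finite-dimensional $A_{N+1}$-representations. That category is Krull--Schmidt (indecomposables have local endomorphism rings, by Fitting's lemma), so every object decomposes into indecomposables and the decomposition is unique up to isomorphism and reordering of summands. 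This already delivers the existence and the uniqueness asserted in the statement, \emph{provided} we identify the indecomposable summands.

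The key step is thus to show that the only indecomposable $A_{N+1}$-representations are the interval modules $I(b,d)$ of the statement, namely those with $\Bbbk$ in positions $b$ through $d-1$, identity maps between consecutive nonzero slots, and zero elsewhere. Rather than cite Gabriel's theorem in full, I would argue this directly and algorithmically, mirroring the standard persistence reduction: proceeding up the filtration, I would maintain at each stage a basis of $H_k(X_i;\Bbbk)$ adapted to the induced maps, so that every structure map sends each basis vector either to a basis vector or to $0$. Each basis vector then ``lives'' on a single half-open block of indices $[b,d)$, which exhibits the whole representation as a direct sum of interval modules; indecomposability forces exactly one such interval. The adapted-basis construction is where the real work lies.

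With the indecomposables identified as intervals, the constraints in the statement are read off by bookkeeping. The normalization $X_0 = \emptyset$ gives $H_k(X_0;\Bbbk)=0$, so no interval is supported at position $0$ and hence every birth satisfies $b_m > 0$; discarding trivial length-zero summands gives $b_m < d_m$; and the essential classes that survive to $X_N = X$ are recorded by the convention $d_m = \infty$. Separating finite from infinite types reproduces the two displayed forms of $I(b,d)$ and completes the argument.

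The main obstacle is the classification of indecomposables -- equivalently, producing the adapted bases realizing the decomposition -- since existence and uniqueness of a decomposition into indecomposables are soft consequences of finite-dimensionality. An equally valid alternative, which I would keep in reserve, is the Zomorodian--Carlsson route: view $\bigoplus_i H_k(X_i;\Bbbk)$ as a finitely generated graded module over the graded principal ideal domain $\Bbbk[t]$, with $t$ acting by the structure maps, and apply the structure theorem for finitely generated modules over a PID; there the cyclic summands correspond to intervals and the same accounting of $b_m$ and $d_m$ applies. I would present the quiver formulation as the primary line because it makes the uniqueness claim transparent.
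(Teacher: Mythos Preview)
Your proposal is mathematically sound and follows the standard modern route to this decomposition theorem: interpret the persistence module as a finite-dimensional representation of the equioriented $A_{N+1}$ quiver, invoke Krull--Schmidt for existence and uniqueness of the indecomposable decomposition, and then classify the indecomposables as interval modules. The alternative you sketch via graded $\Bbbk[t]$-modules is exactly the Zomorodian--Carlsson argument and is equally valid.

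However, there is nothing to compare against: the paper does not prove this statement. It is stated as Theorem~A in a separate theorem counter (\texttt{theoremext}) reserved for results quoted from the literature, and the paper proceeds directly to an intuitive remark about appearance and disappearance of generators without any argument. The result is treated as background, with the references to \cite{elz,zc} in the introduction serving as the source. So your write-up is not a comparison with the paper's proof but rather a self-contained proof of a result the paper takes for granted.

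One small quibble with your sketch: you say the adapted-basis construction ``is where the real work lies'' but do not actually carry it out, so as written the proposal is a plan rather than a proof. If you intend this as a complete argument you should either execute the inductive basis construction explicitly or else cite Gabriel's theorem (or the graded-PID structure theorem) outright. Either is acceptable here since the paper itself is content to cite the result.
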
 

On an intuitive level, the map $0 \to \Bbbk$ indicates the appearance of a homology generator, $\Bbbk \to \Bbbk$ indicates the persistence of the generator, and $\Bbbk \to 0$ indicates the disappearance of the generator.

We note that we should use a field as a coefficient ring of the homology module since the theorem holds only when $\Bbbk$ is a field.

\subsection{Order with level}\label{subsec:order-with-level}

We earlier introduced the idea of a level function. We now introduce the concept of total order consistent with the level function. Total order is used to rigorously describe the algorithms; the level function is needed to rigorously describe Theorem~\ref{thm:optimal-stable-codim1}. Simplices with the same level often appear in an alpha filtration or a Vietoris-Rips filtration; thus, the concept of total order is required to deal with such filtrations.
The pair consisting of the total order and the level function is called an \emph{order with level}. 

Let $\prec$ be a total order on the set of all simplices of $X$. We assume that
\begin{equation}\label{eq:simplices-ordering}
  \sigma \subsetneq \sigma' \text{ implies } \sigma \prec \sigma'.
\end{equation}
We can number all simplices in $X$ by this ordering as follows:
\begin{equation}\label{eq:X-order}
  \sigma_1 \prec \sigma_2 \prec \cdots \prec \sigma_N.
\end{equation}
From the condition \eqref{eq:simplices-ordering}, $X_i = \{\sigma_1, \ldots, \sigma_i\}$ is always
a subcomplex of $X$ and
\begin{equation}\label{eq:X-filtration}
  \emptyset = X_0 \subsetneq X_1 \subsetneq \ldots \subsetneq X_N = X
\end{equation}
is a filtration of a simplicial complex. In this filtration, the number of simplices is increased one by one, which simplifies the mathematical description. 

To include additional information in the filtration, we consider an order with level.
\begin{definition}
  A pair $r = (\hat{r}, \preceq_r)$ of a real-valued map on $X$ and a total order on $X$ satisfying the following conditions is called an \emph{order with level}.
  \begin{enumerate}
  \item $\sigma \preceq_r \sigma'$ implies $\hat{r}(\sigma) \leq \hat{r}(\sigma')$
  \item The order $\preceq_r$ satisfies \eqref{eq:simplices-ordering}
  \end{enumerate}
\end{definition}

From the total order $\preceq_r$, a filtration $\X_r$ is defined by \eqref{eq:X-order} and \eqref{eq:X-filtration}.
Theorem~\ref{thm:ph-fundamental} gives the unique decomposition
$H_k(\X_r; \Bbbk) \simeq \bigoplus_{m=1}^L I(b_m, d_m)$. For each $(b_m, d_m)$,
the simplex $\sigma_{b_m}$ is called a \emph{birth simplex}, $\sigma_{d_m}$ is called a
\emph{death simplex}, and the pair $(\sigma_{b_m}, \sigma_{d_m})$ is called a \emph{birth-death simplices pair}. When $d_m = \infty$, we write the pair as $(\sigma_{b_m}, \star)$ using the special symbol $\star$. $\bdsp_k(\X_r)$ denotes the set of all birth-death simplices pairs.
Moreover, $\hat{r}(\sigma_{b_m})$ is called a \emph{birth time}, $\hat{r}(\sigma_{d_m})$ is called
a \emph{death time}, and the pair of the birth time and death time, 
$(\hat{r}(\sigma_{b_m}), \hat{r}(\sigma_{d_m}))$, is called a \emph{birth-death pair}.
When $\sigma_{d_m} = \star$, $\hat{r}(\star)$ is defined as $+\infty$.
The multiset of birth-death pairs is called a \emph{persistence diagram}:
\begin{equation}
  \{(\hat{r}(\sigma_{b_m}), \hat{r}(\sigma_{d_m})) \mid m=1,\ldots,L; \hat{r}(\sigma_{b_m}) \not = \hat{r}(\sigma_{d_m})\}.
\end{equation}
We write the persistence diagram as $\pd_k(X, r)$.

We can readily show the following fact.
\begin{remark}
  For any $(\tau_0, \omega_0) \in \bdsp_k(\X_r)$, $\tau_0$ is a $k$-simplex and $\sigma_0$ is a $(k+1)$-simplex.
\end{remark}

% The function $\hat{r}$ is called a \emph{level function} and is used to express noise.

By using the level information, we can describe the stability theorem of persistence diagrams~\cite{Cohen-Steiner2007,Chazal2009,Bauer2014,Lesnick2015}. Let $\pd_1, \pd_2$ be two persistence diagrams. The bottleneck distance between two diagrams $d_B(\pd_1, \pd_2)$ is defined as
\begin{equation}
  \label{eq:bottleneck}
  d_B(\pd_1, \pd_2) = \inf_{\gamma} \sup_{x \in \pd_1 \cup \Delta} \|x - \gamma(x)\|_{\infty},
\end{equation}
where $\gamma$ ranges over all bijections from $\pd_1 \cup \Delta$ to $\pd_2 \cup \Delta$, $\Delta$ is the diagonal, and $\|\cdot\|_\infty$ is the $\ell^\infty$ norm. In our setting, the stability theorem of PH is described as below.
\begin{theoremext}[Stability theorem of PH]\label{thm:stability-ph}
  Let $X$ be a finite simplicial complex. For any two orders with levels $q =(\hat{q}, \preceq_q)$ and $r = (\hat{r}, \preceq_r)$ and any $k \geq 0$, the following inequality holds.
  \begin{equation}
    d_B(\pd_k(X, q), \pd_k(X, r)) \leq \|\hat{q} - \hat{r}\|_\infty := \max_{\sigma \in X} |\hat{q}(\sigma) - \hat{r}(\sigma)|
  \end{equation}
\end{theoremext}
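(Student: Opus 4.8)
The plan is to reduce the statement to the classical (algebraic) stability theorem for $\R$-indexed persistence modules by passing from the combinatorial ``order with level'' filtrations to the sublevel-set filtrations of $\hat{q}$ and $\hat{r}$. Write $\delta = \|\hat{q} - \hat{r}\|_\infty$. The two ingredients are (i) an identification of $\pd_k(X, r)$ with the persistence diagram of the $\R$-indexed filtration $\{X_{\hat{r}, t}\}_{t \in \R}$, and likewise for $q$, and (ii) a $\delta$-interleaving between these two $\R$-indexed modules coming from the pointwise bound $|\hat{q}(\sigma) - \hat{r}(\sigma)| \leq \delta$.

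First I would establish (i). For a level function $\hat{r}$ set $X_{\hat{r}, t} = \{\sigma \in X \mid \hat{r}(\sigma) < t\}$; by the defining property of a level function this is a subcomplex, so $\{X_{\hat{r}, t}\}_{t \in \R}$ is an $\R$-indexed filtration whose associated persistence module $\mathbb{H}_k^{\hat{r}}$ has a well-defined persistence diagram. The order $\preceq_r$ merely refines this filtration to one that inserts a single simplex at each step, so $\X_r$ differs from $\{X_{\hat{r}, t}\}$ only through a tie-breaking among simplices sharing a common level value. I would argue that the (off-diagonal) points of $\pd_k(X, r)$ coincide with the diagram of $\mathbb{H}_k^{\hat{r}}$: the multiplicity of a point $(b, d)$ with $b \neq d$ is computed from the ranks of the inclusion-induced maps $H_k(X_{\hat{r}, s}) \to H_k(X_{\hat{r}, t})$, and these maps depend only on the sublevel sets, hence only on $\hat{r}$ and not on $\preceq_r$. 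A birth-death simplices pair $(\sigma_b, \sigma_d)$ with $\hat{r}(\sigma_b) = \hat{r}(\sigma_d)$ contributes only a point with equal birth and death times, which is by definition excluded from $\pd_k(X, r)$. Since the bottleneck distance ignores the diagonal, it then suffices to bound $d_B$ between the diagrams of $\mathbb{H}_k^{\hat{r}}$ and $\mathbb{H}_k^{\hat{q}}$.

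Next I would build the interleaving. From $|\hat{q}(\sigma) - \hat{r}(\sigma)| \leq \delta$ for every $\sigma$ we get that $\hat{r}(\sigma) < t$ implies $\hat{q}(\sigma) \leq \hat{r}(\sigma) + \delta < t + \delta$, and symmetrically, so $X_{\hat{r}, t} \subseteq X_{\hat{q}, t + \delta}$ and $X_{\hat{q}, t} \subseteq X_{\hat{r}, t + \delta}$ for all $t$. Applying $H_k$ gives families of maps between $\mathbb{H}_k^{\hat{r}}$ and $\mathbb{H}_k^{\hat{q}}$ that, together with the internal structure maps, make the relevant triangles commute, each triangle commuting because it is induced by a genuine inclusion of subcomplexes. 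This is exactly a $\delta$-interleaving, and the algebraic stability theorem yields $d_B \leq \delta$ between the two sublevel diagrams. Combined with step (i) this gives the claim. Because $X$ is finite, all modules are pointwise finite-dimensional and tame, so the hypotheses of the algebraic stability theorem hold automatically.

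I expect step (i) --- the tie-breaking independence of the off-diagonal diagram --- to be the main obstacle, since it is where the paper's combinatorial definition of the persistence diagram (via a total order) must be reconciled with the genuinely $\R$-indexed sublevel picture on which the classical theorem is phrased. Once that identification is in place, the interleaving and the appeal to algebraic stability are routine. As an alternative to the rank argument, one may perturb $\hat{q}$ and $\hat{r}$ slightly so that they become injective, so that \eqref{eq:distinguishability} holds and the tie-breaking is vacuous, prove the bound in that case, and pass to the limit using the continuity of both sides of the inequality in $\hat{q}, \hat{r}$; the injective case is cleaner because the order is then forced by the level function.
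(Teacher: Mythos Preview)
The paper does not prove this statement: it is stated as Theorem~B (an external theorem, with citations to \cite{Cohen-Steiner2007, Chazal2009, Bauer2014, Lesnick2015}) and no proof is given. Your proposal is the standard reduction to the algebraic stability theorem via a $\delta$-interleaving of the sublevel-set modules, and it is correct; in particular your handling of step (i), showing that the choice of tie-breaking order $\preceq_r$ affects only diagonal pairs and hence not $\pd_k(X,r)$, is the right way to reconcile the paper's combinatorial definition with the $\R$-indexed picture on which algebraic stability is formulated.
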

The theorem ensures the robustness of PDs to noise.

\subsection{Optimal volumes}

Optimal volume is proposed in~\cite{voc} as a way to extract homological structures corresponding to a birth-death pair. For an order with levels $r = (\hat{r}, \preceq_r)$ and a birth-death simplices pair $(\tau_0, \omega_0) \in \bdsp_k(\X_r)$ with $\omega_0 \not = \star$, the \emph{optimal volume} for the pair is formalized as the solution to the following minimization problem:
\begin{equation}
  \begin{aligned}
    &\text{minimize } \|z\|_0, \text{ subject to } \\
    z &= \omega_0 + \sum_{\omega \in \mathcal{F}_{k+1}} \alpha_\omega \omega \in C_{k+1}(X; \Bbbk), \\
    \tau^*(\partial z) &= 0 \text{ for every } \tau \in \mathcal{F}_k, \\
    \tau_0^*(\partial z) & \not = 0,
  \end{aligned}
\end{equation}
where $C_{k+1}(X; \Bbbk)$ is a chain complex whose coefficient field is $\Bbbk$, $\mathcal{F}_k = \{ \sigma \in X^{(k)} \mid \tau_0 \prec_r \sigma \prec_r \omega_0\}$, $\tau^*$ is an element of the dual basis of cochain complex $C^k(X; \Bbbk)$, and $\|z\|_0 = \{ \omega \mid \alpha_\omega \not = 0 \}$ is the $\ell^0$ norm of $z$. For solution $z$, $\partial z$ is called a \emph{volume-optimal} cycle.

Reference~\cite{voc} shows the following fact, which indicates that the volume-optimal cycle is suitable for representing a birth-death pair.
\begin{fact}
  Let $(\tau_0, \omega_0) \in \bdsp_k(\X_r)$ be a birth-death simplices pair and $z$ an optimal volume of that pair. Then the following relations hold:
  \begin{align}
    \partial z & \not \in Z_k(\{\sigma \in X \mid \sigma \prec_r \tau_0\}), \\
    \partial z & \in Z_k(\{\sigma \in X \mid \sigma \preceq_r \tau_0\}), \\
    \partial z & \not \in B_k(\{\sigma \in X \mid \sigma \prec_r \sigma_0\}), \\
    \partial z & \in B_k(\{\sigma \in X \mid \sigma \preceq_r \sigma_0\}),
  \end{align}
  where $Z_k(\cdot)$ represents the cycles and $B_k(\cdot)$ represents the boundaries.
\end{fact}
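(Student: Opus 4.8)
The plan is to identify the four subcomplexes $\{\sigma \mid \sigma \prec_r \tau_0\}$, $\{\sigma \mid \sigma \preceq_r \tau_0\}$, $\{\sigma \mid \sigma \prec_r \omega_0\}$, $\{\sigma \mid \sigma \preceq_r \omega_0\}$ with the filtration steps $X_{b-1}, X_b, X_{d-1}, X_d$, where $b, d$ are the indices of $\tau_0, \omega_0$ in the ordering \eqref{eq:X-order}; condition \eqref{eq:simplices-ordering} guarantees each of these is genuinely a subcomplex. Three of the four relations then fall out of the constraints defining the optimal volume. First I would note that every $k$-simplex occurring in $\partial z$ is a codimension-one face of some $(k+1)$-simplex occurring in $z$, hence strictly precedes $\omega_0$; combined with the constraint $\tau^*(\partial z) = 0$ for all $\tau \in \mathcal{F}_k$, this forces every such $k$-simplex to satisfy $\tau \preceq_r \tau_0$, giving $\partial z \in Z_k(\{\sigma \mid \sigma \preceq_r \tau_0\})$, which is the second relation (recall $\partial z$ is automatically a cycle since $\partial\partial = 0$). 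The constraint $\tau_0^*(\partial z) \neq 0$ then rules out $\partial z \in Z_k(\{\sigma \mid \sigma \prec_r \tau_0\})$, since membership there would force the $\tau_0$-coefficient to vanish, which is the first relation. Finally, since $z \in C_{k+1}(\{\sigma \mid \sigma \preceq_r \omega_0\})$ by construction, $\partial z$ is a boundary in that subcomplex, which is the fourth relation.

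The only relation that genuinely uses the persistence pairing is the third, $\partial z \notin B_k(\{\sigma \mid \sigma \prec_r \omega_0\})$, and this is where I expect the real work to be. I would argue by contradiction: suppose $\partial z = \partial w$ for some $w \in C_{k+1}(\{\sigma \mid \sigma \prec_r \omega_0\})$. Since neither $z - \omega_0$ (a combination of simplices in $\mathcal{F}_{k+1}$, all $\prec_r \omega_0$) nor $w$ involves $\omega_0$, the chain $z - w$ is a $(k+1)$-cycle whose $\preceq_r$-maximal simplex is $\omega_0$ with coefficient $1$. Expanding $\partial(z-w) = 0$ then yields $\partial \omega_0 \in B_k(\{\sigma \mid \sigma \prec_r \omega_0\})$, i.e. $\partial \omega_0$ is already a boundary just before $\omega_0$ enters the filtration. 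By the standard characterization of the decomposition in Theorem~\ref{thm:ph-fundamental} via matrix reduction, this says exactly that $\omega_0$ is a creating (positive) simplex for $H_{k+1}$, contradicting the hypothesis that $\omega_0$ is the death simplex of the pair $(\tau_0, \omega_0) \in \bdsp_k(\X_r)$.

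The crux of the argument is thus the third relation, and the main obstacle is to make precise and justify the dichotomy that $\partial \omega_0 \in B_k(X_{d-1})$ holds if and only if $\omega_0$ creates a $(k+1)$-cycle (is positive), while otherwise $\omega_0$ destroys a $k$-cycle (is negative) and is paired as a death simplex. I would supply this either by invoking the reduction-algorithm description of the interval decomposition, or, equivalently, by a purely module-theoretic comparison of $\dim H_{k+1}$ and $\dim H_k$ across the single step $X_{d-1} \subsetneq X_d$. The remaining three relations require only bookkeeping with the defining constraints and the face-ordering condition \eqref{eq:simplices-ordering}, and introduce no new ideas.
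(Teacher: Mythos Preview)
The paper does not prove this statement; it is presented as a ``Fact'' with the proof delegated to Reference~\cite{voc} (the volume-optimal cycle paper). So there is no in-paper argument to compare against, and your proposal stands as an independent proof.

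Your argument is correct. The first, second, and fourth relations are indeed bookkeeping from the constraints defining an optimal volume together with the face-ordering condition~\eqref{eq:simplices-ordering}, exactly as you describe. For the third relation, your contradiction argument is sound: from $\partial z = \partial w$ with $w$ supported below $\omega_0$ you extract a $(k+1)$-cycle $z-w$ whose $\preceq_r$-maximal simplex is $\omega_0$, hence $\partial\omega_0 \in B_k(X_{d-1})$, which forces $\omega_0$ to be a positive (creating) simplex for $H_{k+1}$ rather than a negative (death) simplex for $H_k$. The positive/negative dichotomy you invoke---that across a one-simplex step $X_{d-1}\subsetneq X_d$ the added $(k+1)$-simplex $\omega_0$ either creates a $(k+1)$-cycle (iff $\partial\omega_0$ was already a boundary) or destroys a $k$-cycle---is the standard pairing lemma underlying the reduction-algorithm description of Theorem~\ref{thm:ph-fundamental}, and citing it here is entirely appropriate. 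One minor remark: the paper's statement writes $\sigma_0$ in the last two lines, which is a typo for $\omega_0$; you have silently corrected this.
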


\subsection{Optimal volumes for the $(n-1)$th PH and persistence trees}

In this section, we consider a triangulation of a convex set in $\R^n$ and its $(n-1)$th PH with $n \geq 2$. More precisely, we assume the following:
\begin{cond}\label{cond:rn}
  A simplicial complex $X$ in $\R^n$ satisfies two conditions.
  \begin{itemize}
  \item Any $k$-simplex ($k < n$) is a face of an $n$-simplex
  \item $|X| := \bigcup_{\sigma \in X}\sigma  $ is convex in $\R^n$
  \end{itemize}
\end{cond}

Schweinhart \cite{Ben-thesis} pointed out that the $(n-1)$th PH was isomorphic to the 0th persistent cohomology of the dual filtration
by Alexander duality. Zeroth cohomology is deeply related to the connected components in the dual filtration. This gives rise to another formalization of $(n-1)$th PH. Reference \cite{voc} generalizes the idea shown in \cite{Ben-thesis}.

We now examine an order with levels $r$ on $X$. Consider the filtration $\X_r$ in \eqref{eq:X-order} and \eqref{eq:X-filtration} given by $\preceq_r$. For simplicity, we will use $\Z_2$ as the coefficient field. Then the following proposition and theorems hold~\cite{Ben-thesis, voc}.

\begin{propext}
  For any birth-death pair in $\pd_{n-1}(X, r)$, the death time is not infinity.
\end{propext}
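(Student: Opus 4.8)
The plan is to reduce the statement to a single homological vanishing, namely $H_{n-1}(X; \Z_2) = 0$ for the whole complex $X = X_N$. Recall that by the structure theorem (Theorem~\ref{thm:ph-fundamental}) we have a decomposition $H_{n-1}(\X_r; \Z_2) \simeq \bigoplus_m I(b_m, d_m)$, and an interval summand carries a nonzero entry in the terminal slot $X_N = X$ exactly when $d_m = \infty$. Hence the number of birth-death simplices pairs with $\omega_0 = \star$ equals $\dim_{\Z_2} H_{n-1}(X_N; \Z_2) = \dim_{\Z_2} H_{n-1}(X; \Z_2)$. Since every such infinite bar contributes a pair $(\hat{r}(\sigma_{b_m}), +\infty)$ with finite birth, and finite birth is never equal to $+\infty$, this pair genuinely appears in $\pd_{n-1}(X, r)$; so the absence of infinite bars is precisely the conclusion "no infinite death time."

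First I would make the count of infinite bars precise. With $X_i = \{\sigma_1, \dots, \sigma_i\}$ the filtration determined by $\preceq_r$, the summand $I(b_m, d_m)$ places a copy of $\Z_2$ in the positions $b_m, \dots, d_m - 1$ and $0$ thereafter. Evaluating the decomposition at $i = N$ therefore yields $H_{n-1}(X_N; \Z_2) \simeq \bigoplus_{m : d_m = \infty} \Z_2$, because every finite death satisfies $d_m \le N$ and so its summand has already vanished at the last step. This is the standard identification of the infinite part of a barcode with the homology of the final complex, and it only needs to be spelled out in one line.

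Second I would invoke Condition~\ref{cond:rn}: since $|X|$ is convex in $\R^n$, its geometric realization is contractible, so $\tilde{H}_k(X; \Z_2) = 0$ for all $k$. Because $n \ge 2$ we have $n - 1 \ge 1$, and in positive degree reduced and ordinary homology agree, giving $H_{n-1}(X; \Z_2) = \tilde{H}_{n-1}(X; \Z_2) = 0$. Combining this with the count above, there is no summand with $d_m = \infty$; equivalently every $(\tau_0, \omega_0) \in \bdsp_{n-1}(\X_r)$ has $\omega_0 \neq \star$, so its death time $\hat{r}(\omega_0)$ is finite, which is what the proposition asserts.

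There is essentially no deep obstacle here: the entire content is the contractibility of $|X|$, and the only point requiring care is the bookkeeping that ties "infinite death in $\pd_{n-1}(X, r)$" to "nonzero homology of the terminal complex," together with the observation that the hypothesis $n \ge 2$ is exactly what places $(n-1)$-homology in positive degree — the analogous statement fails in degree $0$, where the single connected component always produces one infinite bar. I would also remark that the first clause of Condition~\ref{cond:rn}, that every simplex of dimension $k < n$ is a face of an $n$-simplex, is not actually used for this proposition; convexity alone suffices. An alternative route through the Alexander-duality picture recalled just above, identifying $(n-1)$th PH with the $0$th persistent cohomology of the dual filtration, would also close the argument, but the direct contractibility computation is shorter.
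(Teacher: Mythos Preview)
Your proof is correct. The paper does not actually supply a proof of this proposition in the text: it is stated as a known result from \cite{Ben-thesis, voc}, and the blanket remark ``We can prove the above theorems using Alexander duality'' is the only indication of method. Your direct argument via contractibility of $|X|$ is the natural and most economical route for this particular statement; the Alexander-duality reformulation (identifying $(n-1)$th PH with $0$th persistent cohomology of the dual filtration) is the machinery the paper invokes because it is what is needed for the harder results (Theorems~C, D, E on uniqueness and the tree structure of optimal volumes), but for mere finiteness of death times it is overkill, exactly as you note in your closing paragraph. Your observations that only the convexity clause of Condition~\ref{cond:rn} is used here, and that the hypothesis $n\ge 2$ is what keeps $(n-1)$ in positive degree, are both accurate and worth recording.
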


\begin{theoremext}
  The optimal volume for any birth-death pair is uniquely determined.
\end{theoremext}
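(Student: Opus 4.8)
The plan is to pass to the Alexander-dual description of the $(n-1)$th PH and reduce the optimization to a connectivity problem on the dual graph $\pt$, where uniqueness of the minimizer becomes transparent. Under Condition~\ref{cond:rn} I would take $\pt$ to be the graph whose vertices are the $n$-simplices of $X$ together with one extra vertex $\bar p$ for the unbounded complementary region, and whose edges are the $(n-1)$-simplices: an interior $(n-1)$-simplex joins its two cofaces, while a boundary $(n-1)$-simplex joins its unique coface to $\bar p$ (convexity of $|X|$ guarantees the complement is connected, so a single $\bar p$ suffices). Over $\Bbbk = \Z_2$, an $n$-chain $z = \omega_0 + \sum_{\omega \in \mathcal{F}_n} \alpha_\omega \omega$ is identified with the vertex set $S = \{\omega \mid \alpha_\omega = 1\} \ni \omega_0$, and $\partial z$ is identified with the edge cut $\delta(S)$, i.e. the set of $(n-1)$-simplices incident to exactly one vertex of $S$. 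This is the standard duality dictionary used in \cite{Ben-thesis,voc}, and I would first record it as a lemma.

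With this dictionary I would rewrite the two constraints. The condition $\tau^*(\partial z) = 0$ for all $\tau \in \mathcal{F}_{n-1}$ says that no edge of level strictly between $\hat r(\tau_0)$ and $\hat r(\omega_0)$ crosses the cut; since $z$ is supported on $n$-simplices $\preceq_r \omega_0$, its boundary carries no edge of level $\geq \hat r(\omega_0)$, so in fact no edge of level above $\hat r(\tau_0)$ crosses the cut. Equivalently, $S$ is a union of connected components of the subgraph $\subg{r}{\tau_0}$, with vertex set $\subgv{r}{\tau_0}$ and edge set $\subge{r}{\tau_0}$. The condition $\tau_0^*(\partial z) \neq 0$ says $\tau_0 \in \delta(S)$, i.e. exactly one endpoint of the edge $\tau_0$ lies in $S$. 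Minimizing $\|z\|_0 = |S|$ is therefore the problem of choosing the smallest union of components of $\subg{r}{\tau_0}$ that contains $\omega_0$ and separates the two endpoints of $\tau_0$.

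I would then single out the candidate minimizer: the connected component $C_{\omega_0}$ of $\omega_0$ in $\subg{r}{\tau_0}$. The key step is to show that $C_{\omega_0}$ is feasible, which amounts to proving that $\tau_0$ joins $C_{\omega_0}$ to a different component. This is exactly the elder-rule description of the persistence pairing: translating the $(n-1)$th PH of $\X_r$ into the $0$th persistence of the dual filtration obtained by inserting the vertices and edges of $\pt$ in decreasing $\preceq_r$ order, the pair $(\tau_0, \omega_0)$ records the event in which the component born at the vertex $\omega_0$ is merged into an older component by the edge $\tau_0$. Hence $\tau_0$ has exactly one endpoint in $C_{\omega_0}$, so $C_{\omega_0}$ is feasible. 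Finally, any feasible $S$ is a union of components of $\subg{r}{\tau_0}$ containing $\omega_0$, so $S \supseteq C_{\omega_0}$; if $S \neq C_{\omega_0}$ then $S$ contains a further component and $|S| > |C_{\omega_0}|$. Thus $C_{\omega_0}$ is the unique minimizer, proving the theorem. I expect the main obstacle to be the rigorous identification of the abstract persistence pairing with this graph merge event, namely establishing that Alexander duality is compatible with the one-simplex-at-a-time filtration order and that the elder rule assigns $\omega_0$'s component precisely to $\tau_0$; once this correspondence is in place, the feasibility of $C_{\omega_0}$ and the minimality argument are routine.
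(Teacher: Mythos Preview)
Your proposal is correct and follows essentially the same route as the paper: pass to the dual graph via Alexander duality, identify feasible $z$'s with unions of connected components of $\subg{r}{\tau_0}$, and recognize the optimal volume as the single component $K(\subg{r}{\tau_0},\omega_0)$ containing $\omega_0$, with the elder-rule/merge-tree description of the pairing supplying feasibility of this candidate. The paper packages the argument through Algorithm~\ref{alg:volopt-hd-compute} and Theorem~\ref{thm:vochd-alg} (details deferred to~\cite{voc*}), but the underlying mechanism---and in particular the identification recorded in Proposition~\ref{prop:connect-two-comp}---is exactly what you describe.
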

  
\begin{theoremext}\label{thm:vochd-tree}
  If $z$ and $z'$ are the optimal volumes for two different birth-death pairs,
  one of the following holds:
  \begin{itemize}
  \item $z \cap z' = \emptyset$
  \item $z \subsetneq z'$
  \item $z \supsetneq z'$
  \end{itemize}
  Note that we can naturally regard any
  $z = \sum_{\omega \in X^{(n)}} k_{\omega} \sigma \in C_{n}(X)$ as a subset of $n$-simplices of $X$,
  $\{\omega \in X^{(n)} \mid k_{\omega} \not = 0\}$,
  since we use $\Z_2$ as the homology coefficient field.
\end{theoremext}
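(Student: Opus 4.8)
The plan is to recast the statement through the Alexander-duality picture already invoked above, showing that each optimal volume is a single connected component (a ``basin'') of a dual graph; the trichotomy then becomes an elementary statement about how the component partition of a graph refines as edges are deleted.

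First I would fix the dual graph $G$ whose nodes are the $n$-simplices of $X$ together with an extra node $\infty$ for the exterior, and whose edges are the $(n-1)$-simplices: the edge dual to $\tau$ joins the one or two $n$-simplices having $\tau$ as a facet, joining to $\infty$ when $\tau$ lies on the topological boundary of $|X|$ (well defined because $|X|$ is a convex, pure $n$-dimensional body, hence a triangulated ball). Since a triangulated ball has $Z_n(X;\Z_2)=0$, the map $\partial\colon C_n(X;\Z_2)\to C_{n-1}(X;\Z_2)$ is injective, so every $n$-chain $z$ is identified with its support $S\subseteq X^{(n)}$ (a node set with $\infty\notin S$), and $\partial z$ is exactly the edge cut $\delta(S)$ of $G$, i.e. the $(n-1)$-simplices with precisely one cofacet in $S$. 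I would then record the translation of the optimal-volume constraints: $\omega_0\in S$; every node of $S$ satisfies $\tau_0\prec_r\omega\preceq_r\omega_0$; the cut $\delta(S)$ contains $\tau_0$ but no $\tau$ with $\tau_0\prec_r\tau\prec_r\omega_0$; and we minimise $|S|$.

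The key step is the characterisation: the optimal volume of $(\tau_0,\omega_0)$ is the connected component $S^\star$ of $\omega_0$ in the subgraph $G_{\succ\tau_0}$ obtained by keeping only the edges $\tau$ with $\tau_0\prec_r\tau$. I would prove this in two parts. For feasibility, note that since faces precede cofaces, every edge incident to a node $\omega$ has lower level than $\omega$; hence every node of $S^\star$ has level above $\hat{r}(\tau_0)$, and $\omega_0$ is the $\prec_r$-maximal node of $S^\star$ (this is the elder-rule statement that $\omega_0$ represents the component that dies when $\tau_0$ is inserted, and it is where finiteness of deaths and uniqueness of optimal volumes are used). Thus $S^\star$ meets all constraints, with $\tau_0\in\delta(S^\star)$ because $\tau_0$ joins $S^\star$ to the older component. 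For minimality, I would observe that the support of any feasible $z$ consists of nodes of level $\preceq_r\omega_0$, so all its cut edges have level $\prec_r\omega_0$; combined with the constraint that the cut avoids the open interval $(\tau_0,\omega_0)$, no cut edge exceeds $\tau_0$, i.e. the support is a union of components of $G_{\succ\tau_0}$. Any feasible support contains $\omega_0$, hence contains $S^\star$, so the single component $S^\star$ is the sparsest feasible choice; by the uniqueness theorem above, the optimal volume equals $S^\star$.

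Granting the characterisation, the trichotomy is purely graph-theoretic. Given two distinct pairs with optimal volumes $S^\star$ and $S'^\star$, assume without loss of generality $\hat{r}(\tau_0)<\hat{r}(\tau_0')$, so $G_{\succ\tau_0'}\subseteq G_{\succ\tau_0}$ and the component partition of $G_{\succ\tau_0'}$ refines that of $G_{\succ\tau_0}$. If $\omega_0'\in S^\star$, then $S'^\star$, being connected in the sparser graph, lies in the single $G_{\succ\tau_0}$-component containing $\omega_0'$, which is $S^\star$, giving $S'^\star\subseteq S^\star$; if $\omega_0'\notin S^\star$, the same refinement forces $S^\star\cap S'^\star=\emptyset$, since any common node would put $\omega_0'$ into $S^\star$. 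Strictness follows because $S^\star=S'^\star$ would equate the maximal nodes $\omega_0=\omega_0'$ and hence, by uniqueness of the pairing, the pairs themselves. The main obstacle is the characterisation step, specifically proving rigorously that $\omega_0$ is the maximal node of its component and that the $\ell^0$-minimiser selects exactly one basin; once the Alexander-duality dictionary between $n$-chains and node sets, and between boundaries and cuts, is in place, the nesting itself is a one-line consequence of partition refinement.
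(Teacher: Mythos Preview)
Your proposal is correct and follows essentially the same route the paper takes (via the cited \cite{voc*}): pass to the dual graph by Alexander duality, identify each optimal volume with the connected component $K(\subg{r}{\tau_0},\omega_0)$ (this is Proposition~\ref{prop:connect-two-comp} here), and then read off the trichotomy from the nesting of components as the edge threshold varies---the paper packages this last step as the tree structure of Theorem~\ref{thm:vochd-alg}, where optimal volumes are descendant sets $\mathrm{dec}(\omega,\pt_*)$, but that is exactly your partition-refinement argument viewed constructively. One small correction: write ``$\tau_0\prec_r\tau_0'$'' rather than ``$\hat r(\tau_0)<\hat r(\tau_0')$'', since the level function need not be injective and it is the total order $\preceq_r$ that controls the subgraph inclusion $G_{\succ\tau_0'}\subseteq G_{\succ\tau_0}$.
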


From Theorem~\ref{thm:vochd-tree}, we know that
$D_{n-1}(\X_r)$ can be regarded as a forest (i.e., the union of distinct trees)
by the inclusion relation. In~\cite{Ben-thesis}, the forest is called \textit{persistence trees}.
Moreover, we can compute the persistence trees by the merge-tree algorithm (Algorithm~\ref{alg:volopt-hd-compute}).

To describe the algorithm, consider the one-point compactification space $\R^n \cup \{\infty\} \simeq S^n$.
The following facts are well known.
\begin{fact}
  $X \cup \{\omega_\infty\}$ is a cell decomposition of $\R^n \cup \{\infty\}$ where
  $\omega_\infty = \R^n \cup \{\infty\} \backslash |X|$.
\end{fact}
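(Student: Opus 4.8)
The plan is to verify directly that the collection $X \cup \{\omega_\infty\}$ satisfies the defining properties of a cell decomposition of $S^n = \R^n \cup \{\infty\}$: that its members partition $S^n$, that each is an open cell of the stated dimension, and that the image of each attaching map lands in the union of lower-dimensional cells. The open simplices of $X$ already constitute a cell decomposition of $|X|$, and since $|X|$ is closed in $S^n$ (it is compact, $X$ being finite) its complement is open; as $\omega_\infty$ is defined precisely as $S^n \setminus |X|$, the open simplices of $X$ together with $\omega_\infty$ partition $S^n$. The only real content is therefore to show that $\omega_\infty$ is an open $n$-cell whose attaching map lands in the simplices of dimension at most $n-1$.

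For this I would first record that $|X|$ is a convex body: it is compact and convex by Condition~\ref{cond:rn}, and it has nonempty interior because Condition~\ref{cond:rn} forces the existence of at least one $n$-simplex, whose interior is open in $\R^n$. Fix $p_0 \in \mathrm{int}\,|X|$. By convexity, every ray from $p_0$ meets $\partial|X|$ in exactly one point, at some radius $\rho(u) > 0$ in the direction $u \in S^{n-1}$, and $\rho$ is continuous and bounded away from $0$ and $\infty$ (it is governed by the Minkowski gauge of $|X| - p_0$). Writing points of $D^n$ as $su$ with $u \in S^{n-1}$ and $0 \le s \le 1$, define $\Phi \colon D^n \to S^n$ by $\Phi(0) = \infty$ and $\Phi(su) = p_0 + (\rho(u)/s)\,u$ for $0 < s \le 1$. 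A short check shows $\Phi$ is a continuous bijection onto $\overline{\omega_\infty} = S^n \setminus \mathrm{int}\,|X|$, carrying $\partial D^n$ onto $\partial|X|$ and $\mathrm{int}\,D^n$ onto $\omega_\infty$; being a continuous bijection from a compact space to a Hausdorff space it is a homeomorphism. Thus $\omega_\infty$ is an open $n$-cell with characteristic map $\Phi$.

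It remains to check the boundary condition, namely that $\Phi(\partial D^n) = \partial|X|$ lies in the union of the lower cells. This is immediate: the open interior of any $n$-simplex is open in $\R^n$ and contained in $|X|$, hence contained in $\mathrm{int}\,|X|$, so $\partial|X|$ meets no open $n$-simplex; since every point of $|X|$ lies in the relative interior of a unique simplex, $\partial|X|$ is covered by the simplices of dimension at most $n-1$. For the simplices themselves the boundary condition is the usual fact that the faces of a simplex are lower-dimensional members of $X$. Assembling these observations yields the claim.

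The step I expect to be the main obstacle is the construction and verification of the characteristic map $\Phi$ for $\omega_\infty$: this is where convexity is genuinely used (to guarantee a single, continuously varying boundary crossing $\rho(u)$ along each ray) and where one must handle the point at infinity correctly, checking continuity of $\Phi$ at the center of $D^n$ and that its image is exactly $\overline{\omega_\infty}$. Everything else is bookkeeping.
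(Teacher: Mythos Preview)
Your argument is correct. Note, however, that the paper does not actually prove this statement: it is introduced with the phrase ``The following facts are well known'' and no argument is given. Your write-up supplies precisely the standard verification the paper omits---using convexity of $|X|$ and a radial parametrization from an interior point to build a characteristic map for $\omega_\infty$, and then checking that the attaching map lands in the $(n-1)$-skeleton because open $n$-simplices lie in $\mathrm{int}\,|X|$. This is the expected elementary argument and there is nothing to compare against; you have simply filled in what the paper takes for granted.
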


\begin{fact}
  For any $\tau \in X^{(n-1)}$, the proper cofaces\footnote{ If $\tau$ is a face of $\omega$, $\omega$ is called a \emph{coface} of
  $\tau$. If the difference of the dimensions is one, $\omega$ is a \emph{proper coface} of $\tau$.} of $\tau$ are just two $n$-cells in $X \cup \{\omega_\infty\}$.
\end{fact}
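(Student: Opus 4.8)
The plan is to reduce the statement to a local analysis at a point $p$ in the relative interior of $\tau$, using the affine hyperplane spanned by $\tau$ together with the cell decomposition $X \cup \{\omega_\infty\}$ of $S^n$ supplied by the preceding fact. Write $H = \mathrm{aff}(\tau)$ for the affine hull of the $(n-1)$-simplex $\tau$; since an $(n-1)$-simplex is full-dimensional in $H$, this $H$ is a hyperplane in $\R^n$, and it cuts a small ball $B$ around $p$ into two open half-balls, on the two sides $H^+$ and $H^-$. The first thing I would establish is that any $n$-simplex $\sigma \in X$ having $\tau$ as a facet has its apex (the unique vertex off $\tau$) strictly on one side of $H$, so that for $B$ small enough $\sigma \cap B = \overline{H^+} \cap B$ (or the $H^-$ analogue), i.e. the tangent cone of $\sigma$ at $p$ is exactly the closed half-space bounded by $H$ on the apex side.

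From this local normal form I would deduce the key combinatorial dichotomy. Because the $n$-simplices of a geometric simplicial complex have pairwise disjoint relative interiors, two $n$-simplices containing $\tau$ cannot have apices on the same side of $H$: otherwise both would fill the same open half-ball near $p$ and their interiors would overlap. Hence at most one $n$-simplex of $X$ lies on each side, so $\tau$ is a facet of at most two $n$-simplices of $X$. On the other hand, Condition~\ref{cond:rn} guarantees that $\tau$ is a face of at least one $n$-simplex, so at least one side is occupied. The count of $n$-simplices of $X$ containing $\tau$ is therefore exactly one or two, and this splits $\tau$ into an \emph{interior} facet (both sides occupied) or a \emph{boundary} facet (one side occupied). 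Here I would note that every simplex of $X$ whose closure contains $p$ is a coface of $\tau$, since $p$ lies in the relative interior of a unique face of each simplex containing it; consequently, for small $B$, $|X| \cap B$ is exactly the union of $\tau$ with the $n$-cofaces of $\tau$, so the vacant side in the boundary case is genuinely disjoint from $|X|$ and lies in the complement.

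It remains to account for the exterior cell $\omega_\infty = (\R^n \cup \{\infty\}) \setminus |X|$, which the preceding fact (using convexity of $|X|$) guarantees is a single $n$-cell. If $\tau$ is interior, then near $p$ the set $|X|$ fills the whole ball $B$, so the relative interior of $\tau$ lies in the interior of $|X|$ and hence $\tau \not\subseteq \overline{\omega_\infty}$; the two proper cofaces are precisely the two $n$-simplices of $X$, for a total of two. If $\tau$ is a boundary facet, it has exactly one proper coface in $X$, while the vacant open half-ball lies in $\omega_\infty$; since its closure contains the relative interior of $\tau$ and $\overline{\omega_\infty}$ is closed, all of $\tau$ lies in $\overline{\omega_\infty}$, so $\tau$ is an $(n-1)$-dimensional face of the $n$-cell $\omega_\infty$, giving a second proper coface. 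In both cases $\tau$ has exactly two proper cofaces among the $n$-cells of $X \cup \{\omega_\infty\}$. As a sanity check, this is exactly the pseudomanifold property of the closed manifold $S^n \simeq \R^n \cup \{\infty\}$: in any cell decomposition of a closed $n$-manifold each $(n-1)$-cell bounds exactly two $n$-cells.

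The step I expect to be the main obstacle is the local tangent-cone argument underpinning the ``one or two'' dichotomy: making precise, via the normal form $\sigma \cap B = \overline{H^+} \cap B$ and the disjointness of simplex interiors, that no two $n$-simplices sit on the same side of $H$, and then certifying in the boundary case that the vacant side really belongs to $\omega_\infty$ rather than to some other part of $|X|$. Alternatively, one could shortcut this by citing the standard PL-topology fact that a triangulated $n$-manifold with boundary (which $|X|$ is, being a convex body, hence an $n$-ball) has each $(n-1)$-simplex on exactly one or two $n$-simplices; but proving that fact is itself the local argument above.
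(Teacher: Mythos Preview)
Your argument is correct, but it is worth noting that the paper does not actually supply a proof of this statement at all: it is recorded as one of two ``well-known'' facts and is simply asserted, so there is nothing against which to compare at the level of technique. What you have written is therefore not an alternative to the paper's proof but a filling-in of a gap the paper deliberately leaves.

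As to the content of your proposal, the local half-space analysis at an interior point $p$ of $\tau$ is the standard route and is carried out cleanly. The only place I would tighten is the sentence ``every simplex of $X$ whose closure contains $p$ is a coface of $\tau$'': this is true because $p$ lies in the relative interior of a unique simplex of $X$ (namely $\tau$), and that unique simplex must be a face of any simplex whose closure contains $p$. You use this implicitly to conclude that $|X|\cap B$ is exactly the union of $\tau$ with its $n$-cofaces near $p$; making the uniqueness step explicit removes the last possible worry in the boundary case that some stray simplex of $X$ not containing $\tau$ might protrude into the ``vacant'' half-ball. With that one clarification, the interior/boundary dichotomy and the count of two proper $n$-cofaces in $X\cup\{\omega_\infty\}$ go through exactly as you describe, and your closing remark about the pseudomanifold property of $S^n$ is an apt sanity check.
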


We can extend the order with levels $r$ onto $X \cup \{\omega_\infty\}$ by regarding $\omega_\infty$ as the maximum element and
$\hat{r}(\omega_\infty) = +\infty$.
To describe the algorithm, 
we consider a directed graph $\pt$ whose nodes are $n$-cells in $X \cup \{\omega_{\infty}\}$.
An edge has extra data
in $X^{(n-1)}$, and we can write the edge from $\omega$ to $\omega'$ with
extra data $\tau$ as $(\omega \xrightarrow{\tau} \omega')$.
The directed graph $\pt$ is increasingly updated in Algorithm~\ref{alg:volopt-hd-compute}.

Since the graph is always a forest throughout the algorithm\cite{voc},
we can find a root
of a tree that contains an $n$-cell $\omega$ in the graph $\pt$
by recursively following the edges from $\omega$.
We call this procedure \textproc{Root}($\omega, \pt$).

\begin{algorithm}
  \caption{Computing persistence trees by the merge-tree algorithm.}\label{alg:volopt-hd-compute}
  \begin{algorithmic}
    \Procedure{Compute-Tree}{$\X_r$}
    \State initialize $\pt = \{\omega_\infty\}$
    \For{$\sigma \in X$ in $(\preceq_r)$-descending order} \Comment (LOOP)
      \If{$\sigma$ is an $n$-simplex}
        \State add $\sigma$ to $\pt$ as a vertex
      \ElsIf{$\sigma$ is an $(n-1)$-simplex}
        \State let $\omega_1$ and $\omega_2$ be two cofaces of $\sigma$
        \State $\omega_1' \gets \textproc{Root}(\omega_1, \pt)$
        \State $\omega_2' \gets \textproc{Root}(\omega_2, \pt)$
        \If{$\omega_1' = \omega_2'$}
          \State \textbf{continue}
        \ElsIf{$\omega_1' \succ_r  \omega_2'$}
          \State Add $(\omega_2' \xrightarrow{\sigma} \omega_1')$ to $\pt$ as an edge
        \Else
          \State Add $(\omega_1' \xrightarrow{\sigma} \omega_2')$ to $\pt$ as an edge
        \EndIf
      \EndIf
    \EndFor
    \Return $\pt$
    \EndProcedure
  \end{algorithmic}
\end{algorithm}

The following theorem gives the interpretation of
the result of the algorithm as persistence information.
\begin{theoremext}\label{thm:vochd-alg}
  Let $\pt_*$ be the result of Algorithm~\ref{alg:volopt-hd-compute}. Then
  the following holds:
  
  \begin{enumerate} % [leftmargin=2cm,label=({\roman*})]
  \item $\pt_*$ is a tree whose root is $\omega_\infty$
  \item $D_{n-1}(\X_r) = \{(\tau, \omega) \mid (\omega \xrightarrow{\tau} *) \text{ is an edge of } \pt_*\}$. Here
    $*$ means another vertex
  \item If there is an edge $\omega' \xrightarrow{\tau} \omega$ is in $\pt_*$,  we have $\omega' \prec_r \omega$ 
  \item The optimal volume for $(\tau, \omega) \in D_{n-1}(\X_r)$ is $\mathrm{dec}(\omega, \pt_*)$, where
    $\mathrm{dec}(\omega, \pt_*)$ the set of all descendant nodes of $\omega$ in $\pt_*$ including $\omega$ itself
  \item $\pt_*$ gives the persistence trees. That is,
    $(\tau, \omega)$ is the parent of $(\tau', \omega')$ in the persistence trees
    if and only if there are edges
    $\omega' \xrightarrow{\tau'} \omega$
  \end{enumerate}
\end{theoremext}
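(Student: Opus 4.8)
The plan is to recognize Algorithm~\ref{alg:volopt-hd-compute} as a union--find computation of connected components on the dual adjacency graph and to transport it to $(n-1)$th persistent homology through the Alexander/Lefschetz duality invoked just before Condition~\ref{cond:rn}. I would let $G$ be the graph whose vertices are the $n$-cells of $X \cup \{\omega_\infty\}$ and whose edges are the $(n-1)$-simplices, each joining its two cofaces (well defined by the Fact that every $(n-1)$-simplex has exactly two cofaces in $X\cup\{\omega_\infty\}$). Reading simplices in $\preceq_r$-descending order then adds vertices (for each $n$-simplex) and edges (for each $(n-1)$-simplex) in that order, and $\pt$ is exactly the merge forest recording the components of the graph built so far. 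I would record the standing invariant, by a trivial induction over the loop, that $\textproc{Root}(\omega,\pt)$ is always the $\preceq_r$-maximum $n$-cell of its component, since each merge keeps the larger of the two roots; this gives part (3) immediately, because each edge is drawn from the smaller root to the larger one. It also gives part (1): $\pt$ stays a forest, and at termination $G$ is connected (the $n$-cells tile the connected space $S^n=\R^n\cup\{\infty\}$ and each $(n-1)$-simplex is shared by exactly two of them), so $\pt_*$ is a single tree whose root is the global $\preceq_r$-maximum, namely $\omega_\infty$.

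Part (2) is the heart and is where the Elder rule enters. When an $(n-1)$-simplex $\tau$ is read and its two coface-roots $\omega_1' \succ_r \omega_2'$ differ, the two components merge, and in the dual $0$th picture the younger component dies. By the root invariant $\omega_2'$ is the $\preceq_r$-maximum of the dying component, i.e.\ the first vertex of that component born in the descending sweep, so the $0$th pair is (birth $=\omega_2'$, death $=\tau$). Transporting through the duality --- which swaps homological degree $n-1$ with $0$ and reverses the sweep direction --- turns this into the $(n-1)$th birth--death simplices pair whose birth $(n-1)$-simplex is $\tau$ and whose death $n$-simplex is $\omega_2'$. Since $\omega_2'$ is precisely the source of the edge $(\omega_2' \xrightarrow{\tau} \omega_1')$ added by the algorithm, and $\tau$ is its label, every edge $(\omega \xrightarrow{\tau} *)$ yields a pair $(\tau,\omega)\in\bdsp_{n-1}(\X_r)$, and none is missed because each $\preceq_r$-descending merge records exactly one pair.

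For part (4) I would verify directly that $z:=\sum_{\omega'\in\mathrm{dec}(\omega,\pt_*)}\omega'$ is feasible and then appeal to the uniqueness theorem stated above (``the optimal volume for any birth--death pair is uniquely determined''). The subtree rooted at $\omega$ is completed before $\tau$ is read, so every edge inside it carries a label $\succ_r\tau$; as each label is a proper face of an incident $n$-cell, every $\omega'\in\mathrm{dec}(\omega)\setminus\{\omega\}$ satisfies $\tau\prec_r\omega'\prec_r\omega$, placing $z$ in the admissible form $\omega+\sum_{\mathcal F_{n}}\alpha_{\omega'}\omega'$. Each $(n-1)$-face interior to the subtree is shared by two of its $n$-cells and cancels in $\partial z$, so $\tau'^{*}(\partial z)=0$ for all $\tau'\in\mathcal F_{n-1}$, while the merging facet forces $\tau^{*}(\partial z)\neq0$; hence $z$ is feasible and, being supported exactly on the enclosed region, minimal, so by uniqueness it is the optimal volume. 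Part (5) then follows by reading off $\pt_*$: since each optimal volume is its node's descendant set, inclusion of optimal volumes coincides with the ancestor relation of $\pt_*$, and $(\tau,\omega)$ is the immediate $\supsetneq$-parent of $(\tau',\omega')$ exactly when $\omega$ is the parent node of $\omega'$, i.e.\ when $\omega'\xrightarrow{\tau'}\omega$ is an edge, matching Theorem~\ref{thm:vochd-tree}.

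The main obstacle is the dictionary used in part (2): making the Alexander/Lefschetz duality precise at the level of the whole filtration and checking that the reversal of the sweep together with the Elder rule aligns exactly, so that the edge \emph{source} is the death $n$-simplex and the edge \emph{label} is the birth $(n-1)$-simplex. Once that identification is pinned down, the root invariant and the uniqueness of optimal volumes reduce parts (1), (3), (4), and (5) to routine bookkeeping.
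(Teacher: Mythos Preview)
Your approach is correct and is exactly the one the paper indicates: the paper does not actually prove Theorem~\ref{thm:vochd-alg} in the text but simply states that it follows from Alexander duality, referring to Appendix~A of the preprint~\cite{voc*} for the details. Your union--find reading of the dual graph, the root-is-$\preceq_r$-maximum invariant, and the Elder rule transported through duality constitute precisely that argument; the only place worth tightening is part~(4), where feasibility together with the uniqueness theorem does not by itself force your $z$ to be optimal---the clean fix is the connected-component description (cf.\ the paper's Proposition~\ref{prop:connect-two-comp}), which identifies the optimal volume with $\Kv(\subg{r}{\tau},\omega)$ and hence with $\mathrm{dec}(\omega,\pt_*)$ via your merge-forest invariant.
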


We can prove the above theorems using Alexander duality. Appendix A in~\cite{voc*}\footnote{ This paper is the preprint version of
\cite{voc}. The contents of Appendix A in \cite{voc*} are omitted from~\cite{voc}; thus, we sometimes refer to the preprint version.}
provides a detailed discussion.

% \begin{theorem}[Alexander duality]
%   The following isomorphic holds:
%   \begin{equation}
%     \tilde{H}_q(Y) \simeq \tilde{H}^{n-q-1}(\bar{Y}),
%   \end{equation}
%   where $\tilde{H}_q$ and $\tilde{H}^{n-q-1}$ are the relative homology and cohomology,
%   $Y$ is a subcomplex of $X$, and $\bar{Y}$ is the dual complex of $Y$.
%   Intuitively saying, $\bar{Y}$ is $(\R^n \cup \{\infty\})\backslash Y$.
%   Especially for $q=n-1$, we have the
%   following isomorphism.
%   \begin{equation}
%     H_{n-1}(Y) \simeq \tilde{H}^{0}(\bar{Y})
% \  \end{equation}
% \end{theorem}

\section{Stable volumes for the (n-1)th PH}\label{sec:stable-volume-n-1}

We can now describe stable volume for $(n-1)$th PH using persistence trees under Condition~\ref{cond:rn}.
The parameter $\epsilon$ in the following definition is called a \emph{bandwidth parameter}.

\begin{definition}\label{def:sv-tree}
  Let $X$ be a simplicial complex and $r$ an order with levels on $X$, and $\X_r$ be
  the filtration of $X$ given by \eqref{eq:X-order} and \eqref{eq:X-filtration}.
  Let $\pt_*$ be persistence trees of the filtration $\X_r$.
  For $(\tau_0, \omega_0) \in D_{n-1}(\X_r)$ and a positive number $\epsilon$, the stable volume of the birth-death simplices pair
  with a noise bandwidth parameter $\epsilon$,
  $\SV_\epsilon(r, \tau_0, \omega_0)$,
  is defined as follows:
  \begin{equation}\label{eq:stablevolume}
    \SV_\epsilon(r, \tau_0, \omega_0) = \{\omega_0\} \cup
    \left(\bigcup_{\omega \in C_\epsilon(\tau_0, \omega_0)} \mathrm{dec}(\omega, \pt_*) \right),
  \end{equation}
  where $C_\epsilon(\tau_0, \omega_0)$ is given as 
  \begin{align}
    C_\epsilon(\tau_0, \omega_0) &= \{ \omega \in V \mid 
    \omega \xrightarrow{\tau} \omega_0,\text{ and }
    \hat{r}(\tau) \geq \hat{r}(\tau_0) + \epsilon
    \}.  \label{eq:c-epsilon}
  \end{align}
\end{definition}
We specify the following \emph{$(r, \omega_0)$-order condition} in order to describe
the main theorem.
\begin{definition}
  Let $r$ and $q$ be two orders with levels on a simplicial complex $X$
  and $\omega_0$ be an $n$-simplex.
  Then $q$ satisfies an \emph{$(r, \omega_0)$-order condition} if
  $ \sigma \prec_{r} \omega_0$ implies $\sigma \prec_q \omega_0$
  for all $\sigma \in X$.
\end{definition}

We also define the symbol $\tau_{q, \omega_0}$.
For an order with levels $q$ and an $n$-simplex $\omega_0$,
$\tau_{q, \omega_0}$ is the birth simplex paired with $\omega_0$ in $D_{n-1}(\X_q)$.
That is, $(\tau_{q, \omega_0}, \omega_0) \in D_{n-1}(\X_q)$.

The following theorem is the first main theorem of the paper.
It states that the stable volume is an invariant part of optimal volumes in the presence of small noise.

\begin{theorem}\label{thm:stable-tree}
  \begin{equation}
    \SV_{\epsilon}(r, \tau_0, \omega_0) = \bigcap_{q \in \mathcal{R}_\epsilon} \OV(q, \omega_0),
  \end{equation}
  where
  \begin{equation}
    \begin{aligned}
      \mathcal{R}_\epsilon = \{& q = (\hat{q}, \preceq_{q}): \text{ an order with levels } \mid \\
      & \|\hat{q} - \hat{r}\|_{\infty} < \epsilon / 2 \text{ and }
      q \text{ satisfies $(r, \omega_0)$-order condition} \},
    \end{aligned} 
  \end{equation}
  and
  \begin{equation}
    \begin{aligned}
      \OV(q, \omega_0) & =  \text{the optimal volume of } (\tau_{q, \omega_0}, \omega_0) \in D_{n-1}(\X_q).
    \end{aligned}
  \end{equation}
\end{theorem}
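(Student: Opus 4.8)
The plan is to translate both sides of the identity into statements about connected components of the \emph{dual graph} furnished by the merge-tree description of Theorem~\ref{thm:vochd-alg}, and then to compare the two descriptions through careful control of how this graph reacts to an $\epsilon/2$-perturbation of the levels. Write $\bar X = X\cup\{\omega_\infty\}$. For a level function $\hat q$ and $t\in\R$, let $G_{\hat q}(t)$ be the graph whose vertices are the $n$-cells of $\bar X$ and whose edges are the $(n-1)$-simplices $\tau$ with $\hat q(\tau)>t$, each joining the two $n$-cells it is a face of (there are exactly two, by the Fact preceding Algorithm~\ref{alg:volopt-hd-compute}); let $G^{\ge}_{\hat q}(t)$ be the variant with $\hat q(\tau)\ge t$, and let $[\omega]_{\hat q}(t)$, $[\omega]^{\ge}_{\hat q}(t)$ be the connected component of an $n$-cell $\omega$. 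Since Algorithm~\ref{alg:volopt-hd-compute} is exactly union--find run in $\preceq_q$-descending order, Theorem~\ref{thm:vochd-alg}(4) yields, with $b_q:=\hat q(\tau_{q,\omega_0})$,
\[ \OV(q,\omega_0)=\mathrm{dec}(\omega_0,\pt_*^{q})=[\omega_0]_{\hat q}(b_q), \]
the cluster of $\omega_0$ the instant before it acquires a $\succ_q$-larger root. Reading off which children of $\omega_0$ in $\pt_*$ are attached through edges of level $\ge b_0+\epsilon$ (write $b_0:=\hat r(\tau_0)$), Definition~\ref{def:sv-tree} and \eqref{eq:c-epsilon} likewise give
\[ \SV_\epsilon(r,\tau_0,\omega_0)=[\omega_0]^{\ge}_{\hat r}(b_0+\epsilon); \]
here one uses that every edge leaving a fragile subtree $\mathrm{dec}(\omega,\pt_*)$ (attaching level $<b_0+\epsilon$) has level at most that attaching level, so no fragile subtree is reconnected to $\omega_0$ at the threshold $b_0+\epsilon$.

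The engine of the proof is the bound $b_q\le b_0+\epsilon/2$ for every $q\in\mathcal{R}_\epsilon$. I would prove it by contradiction: if $b_q>b_0+\epsilon/2$, then at level $b_q$ the cluster of $\omega_0$ is joined to some $\omega_\#\succ_q\omega_0$ by a path of edges $\tau$ with $\hat q(\tau)\ge b_q$; since $\|\hat q-\hat r\|_\infty<\epsilon/2$, each such $\tau$ has $\hat r(\tau)>b_q-\epsilon/2>b_0$, so the same path lives in $G_{\hat r}(b_0)$ and places $\omega_\#$ in $[\omega_0]_{\hat r}(b_0)=\OV(r,\omega_0)$, whose $\preceq_r$-maximum is $\omega_0$; thus $\omega_\#\preceq_r\omega_0$, contradicting the contrapositive of the $(r,\omega_0)$-order condition, which gives $\omega_\#\succ_q\omega_0\Rightarrow\omega_\#\succ_r\omega_0$. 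Granting this, the inclusion $\SV_\epsilon\subseteq\bigcap_q\OV(q,\omega_0)$ is immediate: a cell $\omega\in\SV_\epsilon$ is joined to $\omega_0$ by edges of $\hat r$-level $\ge b_0+\epsilon$, hence of $\hat q$-level $>b_0+\epsilon/2\ge b_q$, so the path survives in $G_{\hat q}(b_q)$ and $\omega\in[\omega_0]_{\hat q}(b_q)=\OV(q,\omega_0)$ for every $q$.

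For the reverse inclusion I would argue contrapositively. Since $r\in\mathcal{R}_\epsilon$, the intersection lies in $\OV(r,\omega_0)$, so it suffices, for each $\omega'\in\OV(r,\omega_0)\setminus\SV_\epsilon$, to exhibit one $q\in\mathcal{R}_\epsilon$ with $\omega'\notin\OV(q,\omega_0)$. Set $S=[\omega']^{\ge}_{\hat r}(b_0+\epsilon)$; then $\omega_0\notin S$ and every edge crossing the cut $(S,\bar X\setminus S)$ has $\hat r$-level $<b_0+\epsilon$. The construction is to raise the death edge $\tau_0$ to level $b_0+\epsilon/2-\eta$ and to lower every edge crossing $S$ by almost $\epsilon/2$ (with $\eta$ chosen below all level gaps and below $\hat r(\omega_0)-b_0$), leaving the remaining levels fixed. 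For sufficiently small $\epsilon$ this keeps $\|\hat q-\hat r\|_\infty<\epsilon/2$, respects the order-with-level axioms, and preserves the $(r,\omega_0)$-order condition because every altered edge stays $\prec_q\omega_0$. Under such a $q$ one wants $\omega_0$ to die at level $\approx b_0+\epsilon/2$, strictly above the new levels of all edges crossing $S$; then no edge crosses $S$ in $G_{\hat q}(b_q)$, so $\omega'\in S$ is severed from $\omega_0\notin S$, giving $\omega'\notin[\omega_0]_{\hat q}(b_q)=\OV(q,\omega_0)$.

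The hard part is precisely this last verification: that raising $\tau_0$ actually detonates the death of $\omega_0$ at the intended high level, rather than merely killing an intermediate fragile sub-cluster while $\omega_0$ survives and re-absorbs $S$ lower down. Controlling this means following the component of $\omega_0$ through the perturbed filtration, and the genuinely delicate situation is when the interior coface of $\tau_0$ is tied to $\omega_0$ only through fragile edges lying outside $S$; there one must additionally raise the edges along a suitable $\omega_0$-to-$\tau_0$ path that avoids $S$, and argue that such a path exists inside $\mathrm{dec}(\omega_0,\pt_*)$ and can be lifted within the $\epsilon/2$ budget. I expect this component-tracking argument, together with the bookkeeping needed to rule out re-attachment of $S$, to be the main obstacle. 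The factor $\epsilon/2$ is forced here: severing a connection made at level just below $b_0+\epsilon$ while simultaneously lifting the death to $b_0+\epsilon/2$ requires a relative shift approaching $\epsilon$, which two opposite $\epsilon/2$-perturbations just realize, and this is exactly why the stable-volume threshold in $C_\epsilon$ is $b_0+\epsilon$.
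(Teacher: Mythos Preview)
Your translation of both sides into connected components of the dual graph, and your proof of the inclusion $\SV_\epsilon\subseteq\bigcap_q\OV(q,\omega_0)$ via the bound $b_q<b_0+\epsilon/2$, are exactly the paper's Claim~\ref{claim:easy} and Lemma~\ref{lem:birth-stability}. That half is fine.

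The gap is in the reverse inclusion. You propose to raise $\tau_0$ (and, if needed, a path from $\omega_0$ to the interior coface $\omega_e$ of $\tau_0$ that avoids $S$) so that $\tau_0$ becomes the new birth simplex, and you assert that such an $S$-avoiding path exists in $\mathrm{dec}(\omega_0,\pt_*)$. It need not. The set $S=[\omega']^{\ge}_{\hat r}(b_0+\epsilon)$ can separate $\omega_0$ from $\omega_e$ inside $\OV(r,\omega_0)$; indeed $\omega_e$ itself may lie in $S$. In that situation every path you could raise crosses the very cut you are trying to lower, and your construction collapses: you cannot simultaneously keep $\omega_e$ attached to $\omega_0$ above level $b_0+\epsilon/2$ and sever $S$ below that level.

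The paper's proof of the corresponding Claim~\ref{claim:hard} meets exactly this obstruction with a two-case split. It first isolates a single separating edge $\tilde\tau$ with $b_0\le\hat r(\tilde\tau)<b_0+\epsilon$ and works with the component $K(\subg{r}{\tilde\tau},\tilde\omega)$ rather than with your $S$. In Case~1 an $\omega_0$--$\omega_e$ path avoiding that component exists, and the construction is close to yours (though the paper also raises all edges in the exterior component $K(\subg{r}{\tau_0},\omega_1)$, which you omit; without this the other coface of $\tau_0$ need not reach $\omega_1$ at the raised level, and $\tau_0$ need not be the new birth simplex). In Case~2 no such path exists, and the paper does \emph{not} try to make $\tau_0$ the birth simplex at all: it instead raises the edges of $K(\subg{r}{\tilde\tau},\tilde\omega)$ together with a path $T$ from that component to $\omega_e$ and on through $\tau_0$ into $\omega_1$'s side, thereby attaching $\tilde\omega$'s component directly to the exterior and forcing $\tilde\tau$ to be the new birth simplex $\tau_{q,\omega_0}$. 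The point is that in this case one must \emph{route the death through the bad component}, not around it. Your sketch does not anticipate this alternative, and without it the argument cannot close.
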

The theorem treats two different filtrations, $\X_r$ and $\X_q$, on the same simplicial complex $X$. It should be noted that the reader needs to treat these filtrations carefully.

The following two claims immediately imply Theorem~\ref{thm:stable-tree} and are discussed in the next three subsections.
\begin{claim}\label{claim:easy}
  For any $q \in \mathcal{R}_\epsilon$, the following relation holds:
  \begin{equation}\label{eq:sv_subset_ov}
    \SV_\epsilon(r, \tau_0, \omega_0) \subseteq \OV(q, \omega_0).
  \end{equation}
\end{claim}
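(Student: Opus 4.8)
The plan is to reduce everything to the union-find reading of the persistence trees supplied by Theorem~\ref{thm:vochd-alg} and then track a single $n$-cell $\omega^\ast \in \SV_\epsilon(r,\tau_0,\omega_0)$ across the change of filtration. First I would record a graph-theoretic restatement of Theorem~\ref{thm:vochd-alg}(4): for any order with levels $s$ and any $(\tau',\omega') \in D_{n-1}(\X_s)$, the optimal volume $\mathrm{dec}(\omega',\pt_*)$ equals the connected component of $\omega'$ in the graph $G_X(\succ_s \tau')$ whose vertices are the $n$-cells of $X\cup\{\omega_\infty\}$ and whose edges are the $(n-1)$-simplices $\sigma\succ_s\tau'$ (each joining its two cofaces). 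Since Algorithm~\ref{alg:volopt-hd-compute} always makes the larger root the parent, $\omega'$ is the $\prec_s$-maximum cell of this component, so in particular $\omega_\infty$ lies outside it. This is immediate from the algorithm, because $\mathrm{dec}(\omega')$ is precisely the component that $\omega'$ roots just before its death edge $\tau'$ is processed.

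Next I would extract the defining feature of the stable volume. If $\omega^\ast\neq\omega_0$, then by \eqref{eq:stablevolume} there is a child $\omega\in C_\epsilon(\tau_0,\omega_0)$ with $\omega\xrightarrow{\tau}\omega_0$, $\hat r(\tau)\geq\hat r(\tau_0)+\epsilon$, and $\omega^\ast\in\mathrm{dec}(\omega,\pt_*)$. By the first step $\omega^\ast$ is joined to $\omega$ inside $G_X(\succ_r\tau)$, so every edge of that connecting path is an $(n-1)$-simplex of level $\geq\hat r(\tau)\geq\hat r(\tau_0)+\epsilon$; appending the edge $\tau$ yields a path from $\omega^\ast$ to $\omega_0$ all of whose edges $\sigma$ satisfy $\hat r(\sigma)\geq\hat r(\tau_0)+\epsilon$. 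Thus every element of $\SV_\epsilon(r,\tau_0,\omega_0)$ is connected to $\omega_0$ through $(n-1)$-simplices of level at least $\hat r(\tau_0)+\epsilon$, with no intermediate cell equal to $\omega_\infty$.

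The core step is to bound the death time of $\omega_0$ in the perturbed filtration, namely to show $\hat q(\tau_{q,\omega_0})\leq\hat r(\tau_0)+\epsilon/2$. Suppose not. Then just after its death in $\X_q$ the component of $\omega_0$ in $G_X(\succeq_q\tau_{q,\omega_0})$ contains a cell $\mu\succ_q\omega_0$ (the new, larger root), reached from $\omega_0$ through edges of $\hat q$-level $\geq\hat q(\tau_{q,\omega_0})>\hat r(\tau_0)+\epsilon/2$. By $\|\hat q-\hat r\|_\infty<\epsilon/2$ each such edge $\sigma$ has $\hat r(\sigma)>\hat r(\tau_0)$, hence $\sigma\succ_r\tau_0$ by the level axiom; so this path lives in $G_X(\succ_r\tau_0)$ and places $\mu$ in $\mathrm{dec}(\omega_0,\pt_*)$. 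But $\omega_0$ is the $\prec_r$-maximum of that component, so $\mu\prec_r\omega_0$, and the $(r,\omega_0)$-order condition then forces $\mu\prec_q\omega_0$, contradicting $\mu\succ_q\omega_0$. This is the only place where both defining hypotheses of $\mathcal R_\epsilon$ are genuinely used, and I expect it to be the main obstacle: it requires controlling which merges can occur before $\omega_0$ dies in $\X_q$, over a filtration whose simplex order differs from that of $\X_r$, and it is exactly why the order condition (not merely bottleneck stability) is built into $\mathcal R_\epsilon$.

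Finally I would assemble the pieces. The case $\omega^\ast=\omega_0$ is trivial since $\omega_0\in\OV(q,\omega_0)$. Otherwise, along the path of the second step each edge $\sigma$ satisfies $\hat q(\sigma)>\hat r(\sigma)-\epsilon/2\geq\hat r(\tau_0)+\epsilon/2\geq\hat q(\tau_{q,\omega_0})$, so $\hat q(\sigma)>\hat q(\tau_{q,\omega_0})$ and the level axiom gives $\sigma\succ_q\tau_{q,\omega_0}$. Hence the whole path lies in $G_X(\succ_q\tau_{q,\omega_0})$, placing $\omega^\ast$ in the component of $\omega_0$ there, which by the first step is exactly $\mathrm{dec}(\omega_0,\pt_*)=\OV(q,\omega_0)$ for the filtration $\X_q$. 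Since $\omega^\ast$ and $q\in\mathcal R_\epsilon$ were arbitrary, this establishes \eqref{eq:sv_subset_ov}.
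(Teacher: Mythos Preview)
Your proof is correct and follows essentially the same approach as the paper: both arguments identify $\SV_\epsilon$ and $\OV(q,\omega_0)$ with connected components in the dual graph, establish the birth-time bound $\hat q(\tau_{q,\omega_0}) < \hat r(\tau_0)+\epsilon/2$ (the paper's Lemma~\ref{lem:birth-stability}, your ``core step'') by the same contradiction via the $(r,\omega_0)$-order condition and the $\preceq_r$-maximality of $\omega_0$ in $K(\subg{r}{\tau_0},\omega_0)$, and then use this bound to show the level-$\geq \hat r(\tau_0)+\epsilon$ subgraph lies inside $\subg{q}{\tau_{q,\omega_0}}$. Your presentation tracks an individual path while the paper phrases the last step as a subgraph inclusion \eqref{eq:Kv-inclusion-relation}, but this is only cosmetic.
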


\begin{claim}\label{claim:hard}
  For any $\tilde{\omega} \not\in \SV_\epsilon(r, \tau_0, \omega_0)$,
  there is an order with levels $q \in \mathcal{R}_\epsilon$ satisfying
  $\tilde{\omega}  \not \in \OV(q, \omega_0)$.
\end{claim}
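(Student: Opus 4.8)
The plan is to prove the two claims separately, after recasting the optimal volume in purely connectivity-theoretic terms. My first step is to record the characterization, implicit in Theorem~\ref{thm:vochd-alg}, that for any order with levels $s$ the optimal volume of $(\tau_{s,\omega_0},\omega_0)$ equals $\mathrm{dec}(\omega_0,\pt_*)=\mathrm{Comp}_s(b_s,\omega_0)$, where $b_s=\hat s(\tau_{s,\omega_0})$ is the birth time and $\mathrm{Comp}_s(t,\omega_0)$ denotes the set of $n$-cells joined to $\omega_0$ by a path of $(n-1)$-faces all of $s$-level strictly above $t$ (equivalently, the tree containing $\omega_0$ just before it is absorbed in the descending merge-tree run). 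I would also record the key reduction that the merge-tree algorithm branches only on $(n-1)$- and $n$-simplices, so the tree $\pt_*$, and hence every optimal volume, depends on $q$ only through the relative $\preceq_q$-order of the top two dimensions; lower-dimensional simplices may be moved freely, subject only to monotonicity, without changing any optimal volume.

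For Claim~\ref{claim:easy}, I would first show that every cell of the stable volume reaches $\omega_0$ robustly. If $\omega'\in\SV_\epsilon(r,\tau_0,\omega_0)$ lies in the subtree of a child $\omega$ of $\omega_0$ whose entry edge $\tau$ satisfies $\hat r(\tau)\ge b+\epsilon$, then, using $\mathrm{dec}(\omega)=\mathrm{Comp}_r(\hat r(\tau),\omega)$ together with the fact that the $\omega_0$-side coface of $\tau$ lies in $\mathrm{Comp}_r(\hat r(\tau),\omega_0)$, one obtains a path from $\omega'$ to $\omega_0$ whose faces all have $r$-level $\ge b+\epsilon$. Since $\|\hat q-\hat r\|_\infty<\epsilon/2$, these same faces have $q$-level $>b+\epsilon/2$. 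It then remains to bound the perturbed birth time by $b_q\le b+\epsilon/2$: for any $t>b+\epsilon/2$, a $q$-face of level $>t$ has $r$-level $>b$, so $\mathrm{Comp}_q(t,\omega_0)\subseteq\mathrm{Comp}_r(b,\omega_0)=\mathrm{dec}(\omega_0,\pt_*)$, and $\omega_0$ is the $\preceq_r$-maximum of the latter, hence by the $(r,\omega_0)$-order condition also its $\preceq_q$-maximum; thus $\omega_0$ is not yet absorbed above level $b+\epsilon/2$, giving $b_q\le b+\epsilon/2$. Consequently the faces of our path exceed $b_q$, so $\omega'\in\mathrm{Comp}_q(b_q,\omega_0)=\OV(q,\omega_0)$.

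For Claim~\ref{claim:hard}, if $\tilde\omega\notin\OV(r,\omega_0)$ I simply take $q=r\in\mathcal{R}_\epsilon$. Otherwise $\tilde\omega$ lies in the subtree of a non-robust child $\omega$ of $\omega_0$, whose entry edge $\tau$ satisfies $b<\hat r(\tau)<b+\epsilon$; write $A=\mathrm{dec}(\omega)$. I would build $q$ so that in $\X_q$ the whole block $A$ detaches from $\omega_0$. Concretely, I would lower every $(n-1)$-face crossing between $A$ and its complement—each has $r$-level $<\hat r(\tau)<b+\epsilon$, so it can be pushed strictly below $b+\epsilon/2$ while moving by less than $\epsilon/2$—and simultaneously raise the birth of $\omega_0$ by choosing $\hat q(\tau_0)$ strictly between the new maximal crossing level $M<b+\epsilon/2$ and $b+\epsilon/2$, keeping every cell $\succ_r\omega_0$ above $\omega_0$ so that $b_q=\hat q(\tau_0)$. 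Lower-dimensional faces are lowered along with the crossing faces purely to preserve monotonicity; by the reduction above they do not affect the tree. Then $A$ is joined to $\omega_0$ only through faces of $q$-level $\le M<b_q$, so $\tilde\omega\in A$ is excluded from $\mathrm{Comp}_q(b_q,\omega_0)=\OV(q,\omega_0)$, and the two claims together yield Theorem~\ref{thm:stable-tree}.

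I expect the construction of $q$ in Claim~\ref{claim:hard} to be the main obstacle: one must exhibit a genuine order with levels—a monotone $\hat q$ within $\epsilon/2$ of $\hat r$ together with a compatible total order satisfying the $(r,\omega_0)$-order condition—that realizes the intended detachment without any lower-level crossing face sneaking back in to reconnect $A$, and without disturbing the pairing of $\omega_0$. The feasibility hinges entirely on the bound $\hat r(\tau)<b+\epsilon$ for a non-robust child, which is exactly what leaves room to separate all crossing levels from the birth within a budget of $\epsilon/2$ on each side; the degenerate case of small persistence, $d<b+\epsilon/2$, where $\omega_0$ itself may have to be raised, would be treated separately.
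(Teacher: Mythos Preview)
Your plan for Claim~\ref{claim:hard} has a genuine gap. You propose to detach the non-robust subtree $A=\mathrm{dec}(\omega)$ by lowering every $(n-1)$-face crossing between $A$ and its complement, while raising $\tau_0$ so that $b_q=\hat q(\tau_0)$ stays above all crossing levels. But you have not checked that $\tau_0$ remains the birth simplex of $\omega_0$ after this surgery, and in general it does not. Let $\omega_e$ be the coface of $\tau_0$ lying in $\OV(r,\omega_0)$. If $\omega_e\in A$, then $\tau_0$ itself is a crossing edge between $A$ and its complement, and your two instructions for $\tau_0$ (lower it as a crossing edge, raise it as the birth) conflict outright. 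More generally, whenever every path in $\subg{r}{\tau_0}$ from $\omega_0$ to $\omega_e$ passes through $A$, your lowering of the $A$-boundary also severs $\omega_0$ from $\omega_e$; then in $\isubg{q}{\tau_0}$ the edge $\tau_0$ merges the component of $\omega_e$ (now disjoint from $\omega_0$) into the $\omega_1$-side, and $\omega_0$ is not absorbed at $\tau_0$ at all. Its $q$-birth drops to some level $\le M$, and you lose the inequality $b_q>M$ that your argument needs to exclude $\tilde\omega$ from $\OV(q,\omega_0)$. The sentence ``keeping every cell $\succ_r\omega_0$ above $\omega_0$ so that $b_q=\hat q(\tau_0)$'' is precisely the step that fails: the $(r,\omega_0)$-order condition controls which cells can dominate $\omega_0$, but not whether $\tau_0$ still performs the merge.

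The paper confronts exactly this dichotomy. It splits into Case~1 (there is a path from $\omega_0$ to $\omega_e$ in $\subg{r}{\tau_0}$ avoiding the bad component) and Case~2 (every such path passes through it). Your construction is morally the Case~1 argument: when an avoiding path $P$ exists, one raises the edges along $P\cup\{\tau_0\}\cup K(\subg{r}{\tau_0},\omega_1)$ and lowers the rest, which both keeps $\tau_{q,\omega_0}=\tau_0$ and pushes the separating edge below it. In Case~2 the paper does something essentially different: rather than insisting on $\tau_0$ as the birth, it raises the edges of $K(\subg{r}{\tilde\tau},\tilde\omega)\cup T\cup\{\tau_0\}\cup K(\subg{r}{\tau_0},\omega_1)$ (with $T$ a path from the bad component to $\omega_e$ avoiding $\omega_0$'s side) so that the \emph{new} birth simplex of $\omega_0$ becomes $\tilde\tau$, and $\tilde\omega$ lands on the $\omega_1$-side of that merge. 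You will need an analogous second construction; simply ``treating the degenerate case separately'' does not cover it, since the obstruction arises already when the persistence is large.
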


\subsection{Dual graph and its subgraphs}

To show the theorem, we introduce a dual graph of $X$ and its subgraphs.
\begin{definition}
  $V_X$ and $E_X$ are defined as follows:
  \begin{equation}
    \begin{aligned}
      V_X &= X^{(n)} \cup \{\omega_\infty\}, \\
      E_X &= X^{(n-1)}.
    \end{aligned}
  \end{equation}
  $G_X = V_X \cup E_X$ is an undirected graph when the two endpoints of $\tau \in E_X$ are defined as
  the two cofaces of $\tau$.

\end{definition}

We call the graph $G_X$ the \emph{dual graph} of $X$.
We define the subgraphs of $G_X$, $\isubg{r}{\sigma}$ and
$G_X(\hat{r} \geq s)$, as 
\begin{equation}
  \begin{aligned}
    \isubg{r}{\sigma} &= \isubgv{r}{\sigma} \cup \isubge{r}{\sigma}  \\
    \isubgv{r}{\sigma} &= \{\omega \in V_X \mid
   \omega \succeq_r \sigma \},\\
    \isubge{r}{\sigma} &= \{ \tau \in E_X \mid
    \tau \succeq_r \sigma \}, \\
    G_X(\hat{r} \geq s) &= V_X(\hat{r} \geq s) \cup E_X(\hat{r} \geq s), \\
    V_X(\hat{r} \geq s) &= \{\omega \in V_X \mid \hat{r}(\omega) \geq s \}, \\
    E_X(\hat{r} \geq s) &= \{ \tau \in E_X \mid \hat{r}(\tau) \geq s \},
  \end{aligned}
\end{equation}
where $\sigma$ is a simplex in $X$ and $s$ is a real number.
The condition \eqref{eq:simplices-ordering} ensures that $\isubg{r}{\sigma}$ and
$G_X(\hat{r} \geq s)$
are subgraphs of $G_X$. We also define $\subg{r}{\sigma}$ by replacing $\succeq_r$ with $\succ_r$.

We introduce the notation $\pt_\sigma$ as $\pt$ in Algorithm~\ref{alg:volopt-hd-compute} when the inside of the (LOOP) is finished at $\sigma$. 
The following propositions are essential
for the proof of Theorem~\ref{thm:stable-tree}.
The facts are shown as Fact 17 and Fact 18 in \cite{voc*}.
\begin{fact}\label{fact:connectivity}
  The topological connectivity of vertices 
  in $\pt_\sigma$ is the same as $\isubg{r}{\sigma}$. That is,
  $\omega_1, \ldots, \omega_k \in \isubgv{r}{\sigma}$ are all vertices
  of a connected component in $\isubg{r}{\sigma}$ if and only if
  there is a tree in $\pt_\sigma$ whose vertices are
  $\omega_1, \ldots, \omega_k$.
\end{fact}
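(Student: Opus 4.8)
The plan is to prove the stronger invariant that, at every stage of the loop in Algorithm~\ref{alg:volopt-hd-compute}, the partition of the current vertex set induced by the trees of $\pt_\sigma$ coincides with the partition induced by the connected components of $\isubg{r}{\sigma}$; the stated ``if and only if'' is exactly this equality of partitions. First I would observe that, since the loop runs in $\preceq_r$-descending order, ``the loop has finished at $\sigma$'' means precisely that the processed simplices are $\{\sigma' \in X \mid \sigma' \succeq_r \sigma\}$. Consequently the vertices present in $\pt_\sigma$ are exactly the $n$-cells $\omega \succeq_r \sigma$ together with $\omega_\infty$, i.e.\ the vertex set $\isubgv{r}{\sigma}$, and the edges added to $\pt_\sigma$ form a subset of $\isubge{r}{\sigma}$. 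Thus $\pt_\sigma$ is a subgraph of $\isubg{r}{\sigma}$ on the full vertex set, and the goal is to show it is a spanning forest realizing the same components.

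I would then argue the invariant by induction along the descending sequence of processed simplices, writing $\sigma^{-}$ for the simplex processed immediately before $\sigma$ and splitting the inductive step by the dimension of $\sigma$. If $\sigma$ has dimension $k < n-1$, then $\sigma \notin V_X \cup E_X$, so neither $\pt$ (the algorithm does nothing) nor $\isubg{r}{\sigma}$ changes, and the invariant is inherited verbatim from $\pt_{\sigma^{-}}$ and $\isubg{r}{\sigma^{-}}$. If $\sigma$ is an $n$-simplex, the algorithm inserts it as an isolated vertex; on the graph side $\sigma$ is likewise an isolated vertex of $\isubg{r}{\sigma}$, because any $(n-1)$-face $\tau$ of $\sigma$ satisfies $\tau \prec_r \sigma$ by condition~\eqref{eq:simplices-ordering} and hence is not yet an edge of the subgraph, so a new singleton component appears on both sides and the rest is untouched. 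The base case is the first iteration, which (using Condition~\ref{cond:rn} to guarantee the maximal simplex is an $n$-simplex) adds one top $n$-cell and leaves $\{\omega_\infty, \sigma\}$ as two singletons, matching the edgeless two-vertex graph $\isubg{r}{\sigma}$.

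The heart of the argument is the $(n-1)$-simplex case, which is exactly the correctness of a Kruskal/union--find merge step. When $\sigma \in X^{(n-1)}$ is processed, its two proper cofaces $\omega_1, \omega_2$ are $n$-cells strictly above $\sigma$ in the order, so by the inductive hypothesis each already lies in a tree of $\pt_{\sigma^{-}}$ whose vertex set is its component in $\isubg{r}{\sigma^{-}}$. Adding the edge $\sigma$ to the graph merges the components of $\omega_1$ and $\omega_2$, or leaves the partition unchanged if they already coincide; correspondingly, the algorithm compares $\textproc{Root}(\omega_1)$ and $\textproc{Root}(\omega_2)$ and either skips (same root, same component) or links the two roots (distinct roots, distinct components), producing the identical merge of tree vertex sets. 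Here I would invoke the fact, used throughout the algorithm, that $\pt$ remains a forest, so that $\textproc{Root}$ is well defined and linking two distinct roots yields a single tree rather than a cycle; with that, the partition update on the algorithm side matches the component update on the graph side, closing the induction.

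The main obstacle I anticipate is not conceptual depth but bookkeeping: one must be careful that ``finished at $\sigma$'' corresponds to the $\succeq_r$ (not $\succ_r$) subgraph, that $\omega_\infty$ is consistently treated as an always-present maximal vertex, and, crucially, that the two cofaces of an $(n-1)$-simplex are already present as vertices when its edge is processed, which is precisely where condition~\eqref{eq:simplices-ordering} is needed. Once these points are pinned down, the statement reduces to the standard invariant of incremental connected-component maintenance.
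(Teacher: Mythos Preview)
Your inductive argument is correct and is exactly the standard union--find/Kruskal invariant one would expect here: the case split by dimension is right, the use of \eqref{eq:simplices-ordering} to ensure that cofaces precede faces (so that an $n$-simplex enters as an isolated vertex and the two cofaces of an $(n-1)$-simplex are already present) is the key point, and your handling of $\omega_\infty$ as the always-present maximal vertex matches the paper's convention. Note, however, that the paper does not give its own proof of this statement; it is quoted as a known fact from the preprint \cite{voc*} (Fact~17 there), so there is nothing in the present paper to compare your argument against.
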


\begin{fact}\label{fact:root}
  For each tree in $\pt_\sigma$, the root of the tree is
  the $(\preceq_r)$-maximum vertex in the tree.
\end{fact}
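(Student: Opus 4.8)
The plan is to prove the assertion by induction on the steps of the merge-tree algorithm, following the snapshot $\pt_\sigma$ as $\sigma$ runs over $X$ in $(\preceq_r)$-descending order. The single property I would carry through the induction is an \emph{edge-orientation invariant}: every directed edge present in $\pt_\sigma$ runs from its $\preceq_r$-smaller endpoint to its $\preceq_r$-larger endpoint, i.e. each edge $(\omega \xrightarrow{\tau} \omega')$ satisfies $\omega \prec_r \omega'$. I take the forest structure of $\pt_\sigma$ as given, since it is stated in the text and attributed to \cite{voc}; with that in hand, only the orientation of the edges needs to be controlled, and the ``root is the maximum'' conclusion will follow formally.

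For the base case, $\pt$ is initialized as the single vertex $\omega_\infty$ with no edges, so the invariant holds vacuously. For the inductive step I would check the two branches of the loop body. If $\sigma$ is an $n$-simplex, it is inserted as an isolated vertex and no edge is created, so the invariant persists. If $\sigma$ is an $(n-1)$-simplex, the only possible change is the addition of one edge between the roots $\omega_1'$ and $\omega_2'$ of the two cofaces of $\sigma$, and only when those roots differ. Here I would simply read off the algorithm: the branch $\omega_1' \succ_r \omega_2'$ adds $(\omega_2' \xrightarrow{\sigma} \omega_1')$ and the complementary branch adds $(\omega_1' \xrightarrow{\sigma} \omega_2')$, so in both cases the new edge points from the smaller root to the larger one. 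All earlier edges are untouched, so the invariant is preserved.

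To pass from the invariant to the statement, I would first note that every newly added edge emanates from a former root, i.e. from a vertex of out-degree $0$, raising its out-degree to $1$; hence every vertex of $\pt_\sigma$ has out-degree at most $1$. In a tree with $k$ vertices and $k-1$ edges this forces exactly one vertex of out-degree $0$, namely the unique sink, which is precisely the vertex returned by \textproc{Root}. Now fix a tree of $\pt_\sigma$ and an arbitrary non-root vertex $v$; repeatedly following its outgoing edge yields a path $v \prec_r v_1 \prec_r v_2 \prec_r \cdots$ along which $\preceq_r$ strictly increases by the invariant, and since the tree is finite this path ends at the sink $\rho$, giving $v \prec_r \rho$. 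As $v$ was arbitrary, $\rho$ is the $(\preceq_r)$-maximum vertex of the tree, which is the claim.

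I do not expect a genuine obstacle, as the argument is essentially bookkeeping on the algorithm. The only point demanding care is terminological: one must confirm that the vertex computed by \textproc{Root} (the sink reached by following outgoing edges) is the same object called the ``root'' in the statement and that it is unique within each tree. This is exactly what the out-degree bound above secures, which is why I would prove that bound in the same inductive pass rather than invoking it separately.
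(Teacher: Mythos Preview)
Your argument is correct. The paper does not actually prove this fact here; it cites it as Fact~18 from the preprint~\cite{voc*}, so there is no in-paper proof to compare against. Your edge-orientation invariant is exactly the natural mechanism, and in fact the paper later records the same invariant (for the final tree $\pt_*$) as item~3 of Theorem~\ref{thm:vochd-alg}; your induction simply establishes it at every intermediate stage $\pt_\sigma$ and then reads off the claim.
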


The next proposition describes the role of the birth and death simplices. 
$K(G, \omega)$ denotes the connected component of a graph $G$ which
contains a vertex $\omega$. $\Kv(G, \omega)$ and $\Ke(G, \omega)$ denote
all the vertices and all the edges of $K(G, \omega)$.

\begin{prop}\label{prop:connect-two-comp}
  Let $(\tau_0, \omega_0) \in \bdsp_{n-1}(\X_r)$.
  Then $\omega_0$ is the $(\preceq_r)$-maximum $n$-simplex in
  $K(\subg{r}{\tau_0}, \omega_0)$ and
  the optimal volume of $(\tau_0, \omega_0)$ is $K(\subg{r}{\tau_0}, \omega_0)$.
  Furthermore, 
  there exists $\omega_1 \in V_X$ satisfying
  the following conditions:
  \begin{itemize}
  \item $\omega_0 \xrightarrow{\tau_0} \omega_1$ in $\pt_*$
  \item $\omega_1 \succ_r \omega_0$
  \item $K(\isubg{r}{\tau_0}, \omega_0) = K(\subg{r}{\tau_0}, \omega_0) \cup
    K(\subg{r}{\tau_0}, \omega_1) \cup \{\tau_0\}$.
    That is, $\tau_0$ is the edge between
    $K(\subg{r}{\tau_0}, \omega_0)$ and $K(\subg{r}{\tau_0}, \omega_1)$
  \end{itemize} 
  
\end{prop}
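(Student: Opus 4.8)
The plan is to prove Proposition~\ref{prop:connect-two-comp} by unwinding the merge-tree algorithm (Algorithm~\ref{alg:volopt-hd-compute}) at the exact moment the death simplex $\tau_0$ is processed, using Fact~\ref{fact:connectivity} and Fact~\ref{fact:root} as the bridge between the evolving forest $\pt_\sigma$ and the static subgraphs $\subg{r}{\tau_0}$ and $\isubg{r}{\tau_0}$. The key observation is that $\isubg{r}{\tau_0}$ differs from $\subg{r}{\tau_0}$ only by the single edge $\tau_0$ (since $\hat{r}$ distinguishes simplices and $\tau_0$ is the unique edge with $\tau_0 \succeq_r \tau_0$ but not $\tau_0 \succ_r \tau_0$), so the birth-death pairing is precisely the event where adding $\tau_0$ merges two connected components of $\subg{r}{\tau_0}$ into one component of $\isubg{r}{\tau_0}$.

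First I would fix $\sigma$ to be the simplex immediately preceding $\tau_0$ in the $(\preceq_r)$-order — equivalently, I work with $\pt_\sigma$ just \emph{before} $\tau_0$ is processed — so that at this stage the vertex sets of $\pt_\sigma$ correspond to the connected components of $\subg{r}{\tau_0}$ via Fact~\ref{fact:connectivity}. When $\tau_0$ is processed, its two cofaces $\omega_0$ and $\omega_1$ lie in distinct trees (otherwise $\tau_0$ would create no edge and $(\tau_0,\omega_0)$ would not be a death pair, contradicting $(\tau_0,\omega_0) \in \bdsp_{n-1}(\X_r)$). By Fact~\ref{fact:root}, the two roots $\omega_0' = \textproc{Root}(\omega_0,\pt_\sigma)$ and $\omega_1' = \textproc{Root}(\omega_1,\pt_\sigma)$ are the $(\preceq_r)$-maxima of their respective components $K(\subg{r}{\tau_0},\omega_0)$ and $K(\subg{r}{\tau_0},\omega_1)$. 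The algorithm orients the new edge from the $(\preceq_r)$-smaller root to the larger one, and by the theory recalled in Theorem~\ref{thm:vochd-alg}, the death simplex of the newly-merged pair is the smaller root; so I identify $\omega_0$ with this smaller root, giving both $\omega_0 \xrightarrow{\tau_0}\omega_1$ in $\pt_*$ and $\omega_1 \succ_r \omega_0$. This also shows $\omega_0$ is the $(\preceq_r)$-maximum $n$-simplex in its component $K(\subg{r}{\tau_0},\omega_0)$, and Theorem~\ref{thm:vochd-alg}(4) identifies the optimal volume with $\mathrm{dec}(\omega_0,\pt_*)$, which equals this same component by Fact~\ref{fact:connectivity}.

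For the final decomposition, I note that $\isubg{r}{\tau_0} = \subg{r}{\tau_0} \cup \{\tau_0\}$ as graphs, and that $\tau_0$ joins a vertex of $K(\subg{r}{\tau_0},\omega_0)$ to a vertex of $K(\subg{r}{\tau_0},\omega_1)$. Since these two components are disjoint in $\subg{r}{\tau_0}$ and $\tau_0$ is the unique new edge, the component of $\omega_0$ in $\isubg{r}{\tau_0}$ is exactly their union together with $\tau_0$, which is the asserted identity $K(\isubg{r}{\tau_0},\omega_0) = K(\subg{r}{\tau_0},\omega_0) \cup K(\subg{r}{\tau_0},\omega_1) \cup \{\tau_0\}$.

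The main obstacle I expect is the bookkeeping that correctly synchronizes the three descriptions — the time-indexed forest $\pt_\sigma$, the static subgraphs at threshold $\tau_0$, and the final tree $\pt_*$ — in particular justifying that the root-orientation rule of the algorithm agrees with the birth-death pairing of $\bdsp_{n-1}(\X_r)$ and that $\mathrm{dec}(\omega_0,\pt_*)$ does not change after step $\tau_0$ (i.e.\ that later-added simplices only attach above $\omega_1$, never inside the descendant set of $\omega_0$). This monotonicity follows from Theorem~\ref{thm:vochd-alg}(3) (edges always point from smaller to larger), but stating it cleanly and invoking Facts~\ref{fact:connectivity}--\ref{fact:root} at the right instant is where the care is needed; the rest is essentially a direct reading of the algorithm.
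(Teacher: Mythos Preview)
Your approach is correct and is exactly what the paper does: it simply states that the proposition follows from Algorithm~\ref{alg:volopt-hd-compute} together with Facts~\ref{fact:connectivity} and~\ref{fact:root}, which is precisely the strategy you spell out. Two minor slips to clean up in execution: it is the total order $\preceq_r$, not $\hat{r}$, that guarantees $\isubg{r}{\tau_0}\setminus\subg{r}{\tau_0}=\{\tau_0\}$ (the framework of orders with levels does \emph{not} assume $\hat{r}$ distinguishes simplices), and since the loop in Algorithm~\ref{alg:volopt-hd-compute} runs in $(\preceq_r)$-\emph{descending} order, the state ``just before $\tau_0$ is processed'' is $\pt_{\sigma}$ for $\sigma$ the immediate $\preceq_r$-\emph{successor} of $\tau_0$, not its predecessor.
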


We can easily prove the proposition by considering Algorithm~\ref{alg:volopt-hd-compute}
and Facts~\ref{fact:connectivity} and \ref{fact:root}. 

\subsection{Proof of Claim \ref{claim:easy}}

The following equality holds
from the definition of stable volume \eqref{eq:stablevolume}, Theorem~\ref{thm:vochd-alg},
Facts \ref{fact:connectivity} and \ref{fact:root}, and Proposition \ref{prop:connect-two-comp}.
\begin{equation}\label{eq:sv-by-kv}
  \SV_\epsilon(r, \tau_0, \omega_0) = \Kv(G_X(\hat{r} \geq \hat{r}(\tau_0) + \epsilon), \omega_0).
\end{equation}
The following equality also holds from Proposition \ref{prop:connect-two-comp}.
\begin{equation}
  \OV(q, \omega_0) = \Kv(\subg{q}{\tau_{q,\omega_0}}, \omega_0).
\end{equation}
Therefore, we can show the following relationship to prove Claim~\ref{claim:easy}.
\begin{equation}\label{eq:Kv-inclusion-relation}
  \Kv(G_X(\hat{r} \geq \hat{r}(\tau_0) + \epsilon), \omega_0) \subseteq
  \Kv(\subg{q}{\tau_{q,\omega_0}}, \omega_0).
\end{equation}

The following lemma is essential to prove Claim~\ref{claim:easy}.
\begin{lemma}\label{lem:birth-stability}
  Let $(\tau_0, \omega_0) \in D_{n-1}(\X_r)$ be a birth-death simplices pair and
  $q \in \mathcal{R}_\epsilon$.
  Then the following inequality holds:
  \begin{equation}
    \hat{q}(\tau_{q, \omega_0}) - \epsilon / 2 < \hat{r}(\tau_0).
  \end{equation}
\end{lemma}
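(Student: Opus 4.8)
The plan is to reduce the whole statement to a single inequality about one fixed subgraph of the dual graph $G_X$, and then to exploit separately the two defining properties of $q \in \mathcal{R}_\epsilon$: the sup-norm bound $\|\hat{q} - \hat{r}\|_\infty < \epsilon/2$ and the $(r,\omega_0)$-order condition. Since $\|\hat{q} - \hat{r}\|_\infty < \epsilon/2$ already yields $\hat{q}(\tau_{q,\omega_0}) - \epsilon/2 < \hat{r}(\tau_{q,\omega_0})$ for free, the lemma would follow from $\hat{r}(\tau_{q,\omega_0}) \le \hat{r}(\tau_0)$; but that intermediate inequality is awkward to access directly, so I would instead prove the equivalent bound $\hat{q}(\tau_{q,\omega_0}) < \hat{r}(\tau_0) + \epsilon/2$ and work throughout with the single cutoff level $s := \hat{r}(\tau_0) + \epsilon/2$.

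The heart of the argument is the following sub-claim: $\omega_0$ is the $\preceq_q$-maximum vertex of its connected component in $G_X(\hat{q} \ge s)$ (vacuously true if $\omega_0$ is isolated or absent there). To prove it, I would first show the subgraph containment $G_X(\hat{q} \ge s) \subseteq \subg{r}{\tau_0}$: if $\hat{q}(\sigma) \ge s$ then $\hat{r}(\sigma) > \hat{q}(\sigma) - \epsilon/2 \ge \hat{r}(\tau_0)$, so $\hat{r}(\sigma) > \hat{r}(\tau_0)$ and hence $\sigma \succ_r \tau_0$. Consequently every vertex reachable from $\omega_0$ in $G_X(\hat{q} \ge s)$ already lies in $\Kv(\subg{r}{\tau_0}, \omega_0) = \OV(r, \omega_0)$, where Proposition~\ref{prop:connect-two-comp} (applied to $r$) guarantees that $\omega_0$ is the $\preceq_r$-maximum vertex. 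Thus any such reachable vertex $\omega$ satisfies $\omega \preceq_r \omega_0$, and the $(r,\omega_0)$-order condition upgrades this to $\omega \preceq_q \omega_0$, which is exactly the sub-claim.

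I would then conclude by a short contradiction, invoking Proposition~\ref{prop:connect-two-comp} this time for the perturbed order $q$: it produces a vertex $\omega_1 \succ_q \omega_0$ together with a path from $\omega_0$ to $\omega_1$ lying inside $K(\isubg{q}{\tau_{q,\omega_0}}, \omega_0)$, every simplex $\sigma$ of which satisfies $\sigma \succeq_q \tau_{q,\omega_0}$ and hence $\hat{q}(\sigma) \ge \hat{q}(\tau_{q,\omega_0})$. If $\hat{q}(\tau_{q,\omega_0}) \ge s$ held, this entire path would lie in $G_X(\hat{q} \ge s)$, so $\omega_0$ would reach the strictly larger vertex $\omega_1$ within that subgraph, contradicting the sub-claim. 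Therefore $\hat{q}(\tau_{q,\omega_0}) < s = \hat{r}(\tau_0) + \epsilon/2$, which is precisely the desired inequality.

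The main obstacle I anticipate is careful bookkeeping rather than a deep new idea. Because $\hat{r}$ and $\hat{q}$ are merely order-with-level functions (ties in level are allowed), I must pass between level inequalities and the total orders $\preceq_r, \preceq_q$ only through the order-with-level axioms, and in particular I should not assume $\tau_0$ is a face of $\omega_0$. I also need to dispose cleanly of the case $\omega_0 \notin G_X(\hat{q} \ge s)$ (here $\omega_0$ has no incident edge at level $\ge s$, since every face of $\omega_0$ is $\preceq_q$-below it, so the sub-claim holds trivially), and to make sure that applying Proposition~\ref{prop:connect-two-comp} to $q$ is legitimate even though $\X_q$ is a genuinely different filtration on the same complex $X$.
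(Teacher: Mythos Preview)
Your argument is correct and follows essentially the same route as the paper's proof: both assume $\hat{q}(\tau_{q,\omega_0}) \ge \hat{r}(\tau_0) + \epsilon/2$, use the sup-norm bound to place $\isubg{q}{\tau_{q,\omega_0}}$ inside $\subg{r}{\tau_0}$, invoke Proposition~\ref{prop:connect-two-comp} for $q$ to produce $\omega_1 \succ_q \omega_0$ connected to $\omega_0$, and then combine the $(r,\omega_0)$-order condition with the $\preceq_r$-maximality of $\omega_0$ in $K(\subg{r}{\tau_0},\omega_0)$ to reach a contradiction. Your only cosmetic difference is factoring the argument through the intermediate subgraph $G_X(\hat{q} \ge s)$ and a separate sub-claim, whereas the paper works directly with $\isubg{q}{\tau_{q,\omega_0}}$; the logical content is the same.
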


\begin{proof}
  We assume that $\hat{q}(\tau_{q, \omega_0}) - \epsilon / 2 \geq \hat{r}(\tau_0)$
  and will show a contradiction.
  Since $\|\hat{q} - \hat{r}\|_{\infty} < \epsilon / 2$,
  for any $\sigma \in G_X$
  with $\tau_{q, \omega_0} \preceq_q \sigma$,
  we have
  \begin{equation}
    \hat{r}(\tau_0) \leq \hat{q}(\tau_{q, \omega_0}) - \epsilon / 2
    \leq \hat{q}(\sigma) - \epsilon /2
    < (\hat{r}(\sigma) + \epsilon/2) - \epsilon /2
    = \hat{r}(\sigma).
  \end{equation}
  From the definition of order with levels,
  the inequality immediately implies
  $\tau_0 \prec_r \sigma$ and
  $\isubg{q}{\tau_{q, \omega_0}}$ is a subgraph of $\subg{r}{\tau_0}$.

  Since $(\tau_{q, \omega_0}, \omega_0) \in \bdsp_{n-1}(\X_q)$,
  Proposition~\ref{prop:connect-two-comp} gives $\omega_1 \in V_X$
  satisfying
  \begin{align}
    \omega_1 & \succ_q \omega_0, \label{eq:q-order} \\
    K(\isubg{q}{\tau_{q, \omega_0}}, \omega_0) &=
    K(\subg{q}{\tau_{q, \omega_0}}, \omega_0) \cup
    K(\subg{q}{\tau_{q, \omega_0}}, \omega_1) \cup
    \{\tau_{q, \omega_0}\}. \label{eq:q-conn-two}
  \end{align}
  \eqref{eq:q-order} leads to $\omega_1 \succ_r \omega_0$
  from the $(r, \omega_0)$-order condition. At the same time,
  \eqref{eq:q-conn-two} leads to $\omega_1 \in K(\subg{r}{\tau_0}, \omega_0)$
  since $\isubg{q}{\tau_{q, \omega_0}}$ is a
  subgraph of $\subg{r}{\tau_0}$.
  These facts lead to a contradiction since $\omega_0$ is 
  the $(\preceq_r)$-maximum vertex in $K(\subg{r}{\tau_0}, \omega_0)$,
  but $K(\subg{r}{\tau_0}, \omega_0)$ contains $\omega_1$ and
  $\omega_1 \succ_r \omega_0 $.
\end{proof}

Using Lemma~\ref{lem:birth-stability}, we prove that
$G_X(\hat{r} \geq \hat{r}(\tau_0) + \epsilon)$ is a subgraph of
$\subg{q}{\tau_{q, \omega_0}}$ to show \eqref{eq:Kv-inclusion-relation}.
For any $ \sigma \in G_X(\hat{r} \geq \hat{r}(\tau_0) + \epsilon)$, we have
\begin{equation}
  \hat{q}(\sigma) > \hat{r}(\sigma) - \epsilon / 2 \geq
  (\hat{r}(\tau_0) + \epsilon) - \epsilon / 2
  = \hat{r}(\tau_0) + \epsilon/2
  > \hat{q}(\tau_{q, \omega_0}),
\end{equation}
and hence $\sigma \succ_q \tau_{q, \omega_0}$. This means that
$G_X(\hat{r} \geq \hat{r}(\tau_0) + \epsilon)$ is a subgraph of
$\subg{q}{\tau_{q, \omega_0}}$.

\subsection{Proof of Claim \ref{claim:hard}}

If $\tilde{\omega} \not \in OV(r, \omega_0)$, the conclusion of Claim~\ref{claim:hard}
is trivial. Therefore, we assume $\tilde{\omega} \in OV(r, \omega_0)$.

Since
\begin{align*}
  \tilde{\omega} &\in \OV(r, \omega_0) = \Kv(\subg{r}{\tau_0}, \omega_0), \\
  \tilde{\omega} &\not \in \SV_\epsilon(r, \tau_0, \omega_0) = \Kv(G_X(\hat{r} \geq \hat{r}(\tau_0) + \epsilon), \omega_0), 
\end{align*}
and $G_X(\hat{r} > \hat{r}(\tau_0) + \epsilon)$ is a subgraph of $\subg{r}{\tau}$,
there is $\tilde{\tau} \in E_X$ satisfying the following conditions:
\begin{equation} \label{eq:tau-start}
  \begin{aligned}
    \tau_0 & \prec_r \tilde{\tau}, \\
    \hat{r}(\tau_0) & \leq \hat{r}(\tilde{\tau}) < \hat{r}(\tau_0) + \epsilon, \\
    \tilde{\omega} & \not \in \Kv(\subg{r}{\tilde{\tau}}, \omega_0),  \\
    \tilde{\omega} & \in \Kv(\isubg{r}{\tilde{\tau}}, \omega_0).
  \end{aligned}
\end{equation}
Let $\epsilon_* = \hat{r}(\tilde{\tau}) - \hat{r}(\tau_0)$ and $\eta = (\epsilon_* + \epsilon) /4$.
From \eqref{eq:tau-start}, we have
\begin{equation}
  \begin{aligned}
    \epsilon /2 & > \eta > \epsilon_* / 2 \geq 0.
  \end{aligned}
\end{equation}

Proposition~\ref{prop:connect-two-comp} gives $\omega_1 \in V_X$ satisfying
\begin{align}
  \omega_1 &\succ_r \omega_0, \text{ and } \label{eq:omega-1-succ-omega-0} \\
  K(\isubg{r}{\tau}, \omega_0) & = K(\subg{r}{\tau_0}, \omega_0) \cup
                                   K(\subg{r}{\tau_0}, \omega_1) \cup
                                   \{\tau_0\}.
\end{align}
Let  $\omega_e$ be the endpoint of $\tau_0$ contained in
$K(\subg{r}{\tau_0}, \omega_0)$. A path from $\omega_0$ to $\omega_e$
exists in $K(\subg{r}{\tau_0}, \omega_0)$. We consider the following
two cases regarding the path.
\begin{description}
\item[Case 1] There is a path $P$ from $\omega_0$ to $\omega_e$ in $\subg{r}{\tau_0}$
  without passing through $K(\subg{r}{\tilde{\tau}}, \tilde{\omega})$.
\item[Case 2] Any path
  $\omega_0$ to $\omega_e$ in $K(\subg{r}{\tau_0}$ passes through
  $K(\subg{r}{\tilde{\tau}}, \tilde{\omega})$.
\end{description}
We will divide the proof into the above two cases.

\subsubsection{Case 1}
Figure~\ref{fig:case-1-figure} shows the relationship between
connected components and the path $P$ in Case 1. We can construct an order with levels $q$ satisfying
$\tilde{\omega} \not \in \Kv(\subg{q}{\tau_{q, \omega_0}}, \omega_0)$ based on this relationship.

\begin{figure}[htbp]
  \centering
  \includegraphics[width=0.8\hsize]{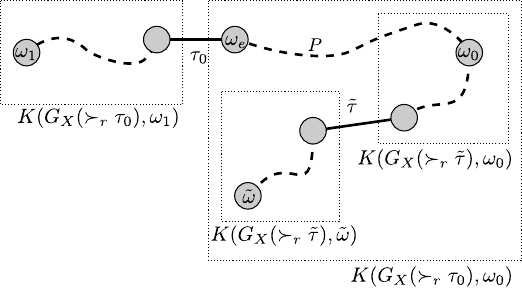}
  \caption{
    The relationship between connected components and path $P$ in
    Case 1.
    Circles represent vertices, rectangles by dotted lines represent
    connected components, solid lines represent edges between connected
    components, and dashed curves represent paths.
  }
  \label{fig:case-1-figure}
\end{figure}

Using $\eta$, we define a function $\hat{q}$ on $X$ as follows:
\begin{equation}\label{eq:def-hatq-case-1}
  \hat{q}(\sigma) = \left\{
    \begin{array}{ll}
      \hat{r}(\sigma) + \eta & \text{if } \sigma \in X^{(n)},  \\
      \hat{r}(\sigma) + \eta & \text{if } \sigma \in Q_{\mathrm{E}}, \\
      \hat{r}(\sigma) - \eta & \text{if } \sigma \in X^{(n-1)} \backslash Q_{\mathrm{E}}, \\
      \hat{r}(\sigma) - \eta & \text{if } \sigma \in X^{(0)} \cup \cdots \cup X^{(n-2)},
    \end{array} \right.
\end{equation}
where
$Q = P \cup \{\tau_0\} \cup K(\subg{r}{\tau}, \omega_1)$, and
$Q_{\mathrm{E}}$ is the set of all edges in $Q$.
We can easily show that $\hat{q}$ is a level function from the fact that
$\hat{r}$ is a level function.
We define the order on $X \cup \{\omega_\infty\}$, $\preceq_q$, as follows:
\begin{equation}
  \label{eq:def-q-case-1}
  \begin{aligned}
    &\sigma \prec_q \sigma' \text{ if and only if} \\
    &\hat{q}(\sigma) < \hat{q}(\sigma'), \text{ or }\\
    &\hat{q}(\sigma) = \hat{q}(\sigma') \text{ and } \sigma \subsetneq \sigma', \text{ or } \\
    &\hat{q}(\sigma) = \hat{q}(\sigma') \text{ and } \sigma \cap \sigma' = \emptyset \text{ and }
    \sigma \prec_r \sigma'.
  \end{aligned}
\end{equation}
This is a kind of lexicographic order by the total preorder $(\sigma, \sigma') \mapsto \hat{q}(\sigma) \leq \hat{q}(\sigma')$,
the partial order $\subseteq$, and the total order $\preceq_r$. Therefore $\preceq_q$ is
a total order.

The following two facts are easily shown.

\begin{fact}\label{fact:case-1-q-owl}
  $q = (\preceq_q, \hat{q})$ is an order with levels.
\end{fact}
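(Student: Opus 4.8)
The plan is to verify the two defining conditions of an order with levels directly from the definitions \eqref{eq:def-hatq-case-1} and \eqref{eq:def-q-case-1}, taking as given the fact (already noted in the text) that the lexicographic construction \eqref{eq:def-q-case-1} makes $\preceq_q$ a total order on $X \cup \{\omega_\infty\}$, with $\omega_\infty$ the maximum and $\hat{q}(\omega_\infty) = +\infty$. The first condition, that $\sigma \preceq_q \sigma'$ implies $\hat{q}(\sigma) \leq \hat{q}(\sigma')$, I would dispatch immediately by reading off \eqref{eq:def-q-case-1}: each of its three clauses forces either $\hat{q}(\sigma) < \hat{q}(\sigma')$ (first clause) or $\hat{q}(\sigma) = \hat{q}(\sigma')$ (second and third clauses), and the case $\sigma = \sigma'$ is trivial.

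The substance lies in the second condition, namely that $\sigma \subsetneq \sigma'$ implies $\sigma \prec_q \sigma'$, which is exactly \eqref{eq:simplices-ordering} for $\preceq_q$. For this I would first establish that $\hat{q}$ is a level function, i.e. that $\sigma \subsetneq \sigma'$ implies $\hat{q}(\sigma) \leq \hat{q}(\sigma')$. Given that, the conclusion follows by a short dichotomy: if $\sigma \subsetneq \sigma'$ then $\hat{q}(\sigma) \leq \hat{q}(\sigma')$, so either the inequality is strict and the first clause of \eqref{eq:def-q-case-1} yields $\sigma \prec_q \sigma'$, or $\hat{q}(\sigma) = \hat{q}(\sigma')$ and the second clause applies because $\sigma \subsetneq \sigma'$; in both cases $\sigma \prec_q \sigma'$.

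The only real work, then, is checking that $\hat{q}$ is a level function, and here I would argue by contraposition: a face relation $\sigma \subsetneq \sigma'$ could be inverted (i.e. $\hat{q}(\sigma) > \hat{q}(\sigma')$) only if $\sigma$ received the $+\eta$ shift while $\sigma'$ received the $-\eta$ shift. By \eqref{eq:def-hatq-case-1} the $+\eta$ shift is assigned precisely to $n$-simplices and to edges in $Q_{\mathrm{E}}$, whereas the $-\eta$ shift is assigned to edges outside $Q_{\mathrm{E}}$ and to all simplices of dimension $\leq n-2$. Since $\sigma \subsetneq \sigma'$ forces $\dim \sigma < \dim \sigma'$, a $+\eta$-simplex $\sigma$ is either an $n$-simplex (which has no proper coface in $X$, as $n$ is the top dimension) or a $Q_{\mathrm{E}}$-edge (whose only proper cofaces are $n$-simplices, which themselves receive $+\eta$). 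In neither situation can $\sigma'$ receive $-\eta$, so the inversion is impossible; every remaining combination of dimensions and $Q_{\mathrm{E}}$-membership either preserves $\hat{r}(\sigma) \le \hat{r}(\sigma')$ or strengthens it by the $\pm\eta$ offsets (using $\eta > 0$), and $\hat{q}$ is a level function.

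I expect this last verification to be the main obstacle, not because it is deep but because it is the one step requiring genuine case bookkeeping; the crux, which I would state explicitly, is that a simplex carrying the $+\eta$ shift is never a proper face of a simplex carrying the $-\eta$ shift. Once that is in hand, the two conditions of the definition together with the total-order property complete the proof that $q = (\hat{q}, \preceq_q)$ is an order with levels.
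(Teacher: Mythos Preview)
Your proposal is correct and follows exactly the route the paper indicates: the paper states immediately after \eqref{eq:def-hatq-case-1} that ``we can easily show that $\hat{q}$ is a level function from the fact that $\hat{r}$ is a level function,'' asserts after \eqref{eq:def-q-case-1} that $\preceq_q$ is a total order, and then records Fact~\ref{fact:case-1-q-owl} without proof under ``the following two facts are easily shown.'' You have simply filled in the omitted case analysis, and your key observation---that a simplex receiving the $+\eta$ shift can never be a proper face of one receiving the $-\eta$ shift, because the $+\eta$ class sits in dimensions $n-1$ and $n$ while the $-\eta$ class in dimension $\geq n-1$ is disjoint from $Q_{\mathrm{E}}$---is exactly what makes the level-function claim go through.
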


\begin{fact}\label{fact:order-coincidence-q}
  The order $\preceq_q$ coincides with $\preceq_r$ on $V_X$. The same is true on $Q_{\mathrm{E}}$ or $X^{(n-1)} \backslash Q_{\mathrm{E}}$.
\end{fact}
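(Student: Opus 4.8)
The plan is to exploit the single structural feature of the construction \eqref{eq:def-hatq-case-1}: the perturbation $\hat{q} - \hat{r}$ is \emph{constant} on each of the three sets named in the statement. Reading \eqref{eq:def-hatq-case-1} line by line, every element of $V_X = X^{(n)} \cup \{\omega_\infty\}$ is raised by $+\eta$, every element of $Q_{\mathrm{E}}$ (which consists of $(n-1)$-simplices, being the edges of $Q$) is raised by $+\eta$, and every element of $X^{(n-1)} \setminus Q_{\mathrm{E}}$ is lowered by $-\eta$. The formal cell $\omega_\infty$ carries $\hat{r}(\omega_\infty) = +\infty$ and remains the maximum under both orders, so it needs no separate treatment. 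First I would record this uniform-shift observation for each of the three sets in turn.

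The immediate consequence is that within any one of these sets $S$ the comparison by $\hat{q}$ agrees verbatim with the comparison by $\hat{r}$: for $\sigma, \sigma' \in S$ we have $\hat{q}(\sigma) < \hat{q}(\sigma') \iff \hat{r}(\sigma) < \hat{r}(\sigma')$ and $\hat{q}(\sigma) = \hat{q}(\sigma') \iff \hat{r}(\sigma) = \hat{r}(\sigma')$, since both sides are shifted by the same constant. I would then compare the lexicographic definition \eqref{eq:def-q-case-1} of $\preceq_q$ with the order-with-levels axiom, which makes $\preceq_r$ refine $\hat{r}$ (so $\hat{r}(\sigma) < \hat{r}(\sigma')$ forces $\sigma \prec_r \sigma'$ by totality). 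When $\hat{r}$ already separates $\sigma$ and $\sigma'$, the first clause of \eqref{eq:def-q-case-1} fires and points in the same direction as $\preceq_r$; hence the two orders agree on every such pair, for all three sets.

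The only point requiring care — and the step I would treat as the crux — is the tie case $\hat{r}(\sigma) = \hat{r}(\sigma')$, where $\preceq_q$ must fall back on its tie-breaking clauses. Here I would use that each of the three sets consists of simplices of a \emph{single} dimension ($n$ for $V_X$, and $n-1$ for both $Q_{\mathrm{E}}$ and $X^{(n-1)} \setminus Q_{\mathrm{E}}$), so two distinct elements of $S$ are never related by $\subsetneq$. Consequently the containment clause of \eqref{eq:def-q-case-1} is vacuous on $S$, and the decision between a tied same-dimension pair can only be made by the remaining clause, which defers to $\prec_r$. This gives $\sigma \prec_q \sigma' \iff \sigma \prec_r \sigma'$ on each set and proves the Fact. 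I would flag that the delicate verification is precisely that for every tied, same-dimension pair the tie-break of $\preceq_q$ reduces to $\prec_r$ (so that $\preceq_q$ restricted to $S$ is total and equals $\preceq_r$); this is where the single-dimensionality of $S$ does the essential work, since it removes the containment alternative and, together with the total order $\preceq_q$ already established, leaves $\prec_r$ as the sole arbiter.
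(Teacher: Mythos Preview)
Your argument is correct and is exactly what the paper intends when it declares the fact ``easily shown'': the shift $\hat q-\hat r$ is constant on each of $V_X$, $Q_{\mathrm E}$, and $X^{(n-1)}\setminus Q_{\mathrm E}$, so strict $\hat r$-comparisons are preserved verbatim, and in the tie case equal dimension kills the containment clause while the already-stated totality of $\preceq_q$ forces the remaining clause and hence agreement with $\prec_r$. The paper supplies no proof beyond that one-line remark, so there is nothing further to compare.
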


\begin{fact}\label{fact:q-in-R-epsilon}
  $q \in \mathcal{R}_\epsilon$.
\end{fact}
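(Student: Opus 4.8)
The plan is to unfold membership in $\mathcal{R}_\epsilon$ into its three defining requirements and check each: that $q$ is an order with levels, that $\|\hat{q}-\hat{r}\|_\infty < \epsilon/2$, and that $q$ satisfies the $(r,\omega_0)$-order condition. The first requirement is precisely Fact~\ref{fact:case-1-q-owl}, so nothing new is needed there. For the sup-norm bound, I would note that \eqref{eq:def-hatq-case-1} perturbs $\hat{r}$ by exactly $\pm\eta$ on every simplex of $X$, so $|\hat{q}(\sigma)-\hat{r}(\sigma)| = \eta$ for all $\sigma \in X$ and hence $\|\hat{q}-\hat{r}\|_\infty = \eta$. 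Since \eqref{eq:tau-start} gives $\epsilon_* = \hat{r}(\tilde{\tau}) - \hat{r}(\tau_0) < \epsilon$, we obtain $\eta = (\epsilon_*+\epsilon)/4 < \epsilon/2$, which is exactly the required strict bound.

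The substance lies in the $(r,\omega_0)$-order condition, i.e. in showing $\sigma \prec_r \omega_0 \Rightarrow \sigma \prec_q \omega_0$ for every $\sigma \in X$. Since $\omega_0 \in X^{(n)}$ is shifted upward, $\hat{q}(\omega_0) = \hat{r}(\omega_0)+\eta$. I would first establish the uniform inequality $\hat{q}(\sigma) \le \hat{q}(\omega_0)$ for all $\sigma \prec_r \omega_0$: the shift satisfies $\hat{q}(\sigma) \le \hat{r}(\sigma)+\eta$, while the order-with-levels condition applied to $\sigma \prec_r \omega_0$ gives $\hat{r}(\sigma) \le \hat{r}(\omega_0)$, so $\hat{q}(\sigma) \le \hat{r}(\omega_0)+\eta = \hat{q}(\omega_0)$. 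Whenever this is strict, the first clause of \eqref{eq:def-q-case-1} immediately yields $\sigma \prec_q \omega_0$, disposing of most cases.

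It remains to treat the tie $\hat{q}(\sigma) = \hat{q}(\omega_0)$, which forces both inequalities above to be equalities, so that $\sigma$ is upward-shifted (i.e.\ $\sigma \in X^{(n)} \cup Q_{\mathrm{E}}$) and $\hat{r}(\sigma) = \hat{r}(\omega_0)$. If $\sigma \in X^{(n)}$, then $\sigma,\omega_0 \in V_X$, and Fact~\ref{fact:order-coincidence-q} tells us that $\preceq_q$ agrees with $\preceq_r$ on $V_X$, so $\sigma \prec_r \omega_0$ propagates to $\sigma \prec_q \omega_0$. If $\sigma \in Q_{\mathrm{E}}$, then $\sigma$ is an $(n-1)$-simplex and $\omega_0$ an $n$-simplex: when $\sigma \subsetneq \omega_0$ the second clause of \eqref{eq:def-q-case-1} gives $\sigma \prec_q \omega_0$, and when $\sigma$ and $\omega_0$ are not nested the tie is resolved by $\prec_r$, so $\sigma \prec_r \omega_0$ again gives $\sigma \prec_q \omega_0$.

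The hard part is the last branch. The tie-break in \eqref{eq:def-q-case-1} is literally phrased through the disjointness condition $\sigma \cap \sigma' = \emptyset$, whereas an $(n-1)$-simplex $\sigma \in Q_{\mathrm{E}}$ with $\hat{q}(\sigma)=\hat{q}(\omega_0)$ may meet $\omega_0$ in a proper face without being a facet of it. I therefore expect the main obstacle to be confirming that, in exactly these non-nested equal-$\hat{q}$ configurations, the total order produced in Fact~\ref{fact:case-1-q-owl} breaks the tie consistently with $\prec_r$, so that $\sigma \prec_r \omega_0$ is preserved as $\sigma \prec_q \omega_0$; the downward-shifted simplices and the strict-inequality cases are routine by comparison.
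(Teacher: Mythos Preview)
Your argument tracks the paper's proof almost verbatim: the paper also notes $\|\hat q-\hat r\|_\infty\le\eta<\epsilon/2$, then assumes $\sigma\prec_r\omega_0$, deduces $\hat q(\sigma)\le\hat q(\omega_0)$ from $\hat q(\omega_0)=\hat r(\omega_0)+\eta$, and splits into the four cases $\hat q(\sigma)<\hat q(\omega_0)$, equality with $\sigma\subsetneq\omega_0$, equality with $\sigma\supsetneq\omega_0$ (impossible), and equality with $\sigma\cap\omega_0=\emptyset$.

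The issue you flag in your last paragraph is real, and it is not a defect of your write-up but of the paper's definition \eqref{eq:def-q-case-1} itself: as literally stated, the three clauses do not cover the case $\hat q(\sigma)=\hat q(\sigma')$ with $\sigma,\sigma'$ incomparable under inclusion yet sharing vertices, so the relation would fail to be total and Fact~\ref{fact:case-1-q-owl} could not hold. The paper's own case analysis in the proof of Fact~\ref{fact:q-in-R-epsilon} has exactly the same omission. The evident repair is to read the third clause of \eqref{eq:def-q-case-1} with ``$\sigma$ and $\sigma'$ are $\subseteq$-incomparable'' in place of ``$\sigma\cap\sigma'=\emptyset$''; this is the only reading under which $\preceq_q$ is a genuine lexicographic refinement and hence total, and with it your remaining branch ($\sigma\in Q_{\mathrm E}$, $\hat q(\sigma)=\hat q(\omega_0)$, $\sigma\not\subsetneq\omega_0$) is resolved immediately by the $\prec_r$ tie-break. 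So your proof is correct and essentially identical to the paper's, with the added merit that you noticed the imprecision the paper glosses over.
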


\begin{proof}
  From the definition, we have $\|\hat{q} - \hat{r}\| \leq \eta < \epsilon / 2$. To show the $(r, \omega_0)$-order condition,
  we assume $\sigma \prec_r \omega_0$ and show $\sigma \prec_r \omega_0$. From the assumption,
  we have $\hat{r}(\sigma) \leq \hat{r}(\omega_0)$.
  Since $\omega_0$ is $n$-simplex, $\hat{q}(\omega_0) = \hat{r}(\omega_0) + \eta$, and so $\hat{q}(\omega_0) \geq \hat{q}(\sigma)$.
  Therefore, we consider the following cases:
  \begin{enumerate}
  \item $\hat{q}(\omega_0) > \hat{q}(\sigma)$. In this case, we can see $\omega_0 \succ_q \sigma $ from the definition of
    order with levels.
  \item $\hat{q}(\omega_0) = \hat{q}(\sigma)$ and $\omega_0 \supsetneq \sigma$. In this case, we have $\omega_0 \succ_q \sigma $
    from the definition of $\preceq_q$.
  \item $\hat{q}(\omega_0) = \hat{q}(\sigma)$ and $\omega_0 \subsetneq \sigma$. In this case, the condition $\omega_0 \subsetneq \sigma$ 
    leads to a contradiction since $\prec_r$ satisfies \eqref{eq:simplices-ordering} but $\sigma \prec_r \omega_0$.
  \item $\hat{q}(\omega_0) = \hat{q}(\sigma)$ and $\omega_0 \cap \sigma = \emptyset$. In this case, we have $\omega_0 \succ_q \sigma$
    from the assumption of $\sigma \prec_r \omega_0$.
  \end{enumerate}
  In all cases except 3, we have $\sigma \prec_r \omega_0$.
\end{proof}

The following fact comes from Fact~\ref{fact:order-coincidence-q}.
\begin{fact}\label{fact:tau-0-q-minimum-in-Q}
  $\tau_0$ is the $(\preceq_q)$-minimum element in $Q$.
\end{fact}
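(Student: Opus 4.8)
The plan is to establish $\tau_0 \prec_q \sigma$ for every $\sigma \in Q$ with $\sigma \neq \tau_0$, reducing each such $\preceq_q$-comparison to a $\preceq_r$-comparison that we already control. The structural observation driving everything is that every element of $Q$ is either an $n$-simplex or an edge lying in $Q_{\mathrm{E}}$; by \eqref{eq:def-hatq-case-1} both of these classes are shifted by $+\eta$, so $\hat{q}(\sigma) = \hat{r}(\sigma) + \eta$ holds uniformly on $Q$. Hence, for $\sigma, \sigma' \in Q$, comparing $\hat{q}$-values is the same as comparing $\hat{r}$-values, and in particular $\hat{q}(\sigma) = \hat{q}(\sigma')$ exactly when $\hat{r}(\sigma) = \hat{r}(\sigma')$.

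First I would dispose of the edges of $Q$. Every edge of $Q$ other than $\tau_0$ belongs to $P$ or to $K(\subg{r}{\tau_0}, \omega_1)$, both of which are subgraphs of $\subg{r}{\tau_0}$; such an edge $\tau$ therefore satisfies $\tau \succ_r \tau_0$. Since $\tau_0$ and all these edges lie in $Q_{\mathrm{E}}$, Fact~\ref{fact:order-coincidence-q} (coincidence of $\preceq_q$ and $\preceq_r$ on $Q_{\mathrm{E}}$) immediately gives $\tau_0 \prec_q \tau$.

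Next I would treat the vertices of $Q$. Each vertex $\omega \in Q$ lies in $K(\subg{r}{\tau_0}, \omega_0)$ or $K(\subg{r}{\tau_0}, \omega_1)$, so $\omega \succ_r \tau_0$ and thus $\hat{r}(\omega) \geq \hat{r}(\tau_0)$, whence $\hat{q}(\omega) \geq \hat{q}(\tau_0)$ by the uniform shift. If this inequality is strict, the first clause of \eqref{eq:def-q-case-1} yields $\tau_0 \prec_q \omega$. If instead $\hat{q}(\omega) = \hat{q}(\tau_0)$, I invoke the lexicographic tie-breaking: when $\tau_0 \subsetneq \omega$ the second clause applies, and otherwise the third clause combined with $\tau_0 \prec_r \omega$ (which holds since $\omega \in \subg{r}{\tau_0}$) again gives $\tau_0 \prec_q \omega$. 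Together the edge and vertex analyses show that $\tau_0$ precedes every other element of $Q$, so $\tau_0$ is the $(\preceq_q)$-minimum element of $Q$.

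The step requiring the most care is the tie case $\hat{q}(\omega) = \hat{q}(\tau_0)$ in the vertex analysis, because the cross-dimensional comparison between the edge $\tau_0$ and an $n$-simplex $\omega$ is not covered by Fact~\ref{fact:order-coincidence-q} and must be settled through the explicit tie-breaking in \eqref{eq:def-q-case-1}. The key is that this tie-breaking is consistent with $\preceq_r$: the face relation $\subseteq$ used in the second clause is refined by $\preceq_r$ through \eqref{eq:simplices-ordering}, and the third clause defers to $\preceq_r$ directly. In effect this shows that $\preceq_q$ and $\preceq_r$ agree on all of $Q$, and since $\tau_0$ is the unique element of $Q$ not lying in $\subg{r}{\tau_0}$—so that every other element is $\succ_r \tau_0$—its $(\preceq_q)$-minimality is immediate.
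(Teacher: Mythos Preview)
Your argument is correct and follows the same line the paper indicates (it simply says the fact ``comes from Fact~\ref{fact:order-coincidence-q}''); you have just filled in the details, especially the cross-dimensional comparison between $\tau_0$ and the $n$-simplices of $Q$, which Fact~\ref{fact:order-coincidence-q} alone does not cover. The one place to be slightly careful is your ``otherwise'' in the tie case: the third clause of \eqref{eq:def-q-case-1} literally requires $\tau_0 \cap \omega = \emptyset$, not merely $\tau_0 \not\subsetneq \omega$, but this is a looseness in the paper's own definition of the lexicographic order rather than in your reasoning.
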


The above fact leads to the following.
\begin{fact}\label{fact:P-subgraph}
  $Q$ is a subgraph of $K(\isubg{q}{\tau_0}, \omega_0)$.
\end{fact}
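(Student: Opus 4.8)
The plan is to split the statement into two independent verifications: that every vertex and edge of $Q$ lies in the subgraph $\isubg{q}{\tau_0}$, and that $Q$ is connected and contains $\omega_0$. Once both are in hand, $Q$ is a connected subgraph of $\isubg{q}{\tau_0}$ passing through $\omega_0$, so it must be contained in the connected component $K(\isubg{q}{\tau_0}, \omega_0)$, which is exactly the claim.

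For the containment $Q \subseteq \isubg{q}{\tau_0}$, I would invoke Fact~\ref{fact:tau-0-q-minimum-in-Q}, which states that $\tau_0$ is the $(\preceq_q)$-minimum element of $Q$. Hence every $\sigma \in Q$, whether a vertex in $V_X$ or an edge in $E_X$, satisfies $\sigma \succeq_q \tau_0$, and by the definition of $\isubg{q}{\tau_0}$ each such $\sigma$ belongs to $\isubg{q}{\tau_0}$. This shows $Q$ is a subgraph of $\isubg{q}{\tau_0}$ with no further work.

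For connectivity, recall $Q = P \cup \{\tau_0\} \cup K(\subg{r}{\tau_0}, \omega_1)$. The path $P$ runs from $\omega_0$ to $\omega_e$, so it is connected and contains $\omega_0$; in particular $\omega_0 \in Q$. Since $\omega_e$ is by definition the endpoint of $\tau_0$ lying in $K(\subg{r}{\tau_0}, \omega_0)$, Proposition~\ref{prop:connect-two-comp} tells us that $\tau_0$ is precisely the edge bridging $K(\subg{r}{\tau_0}, \omega_0)$ and $K(\subg{r}{\tau_0}, \omega_1)$, so its other endpoint lies in $K(\subg{r}{\tau_0}, \omega_1)$. As $K(\subg{r}{\tau_0}, \omega_1)$ is a connected component it is connected in itself, so gluing $P$ to $\tau_0$ at the shared vertex $\omega_e$ and $\tau_0$ to $K(\subg{r}{\tau_0}, \omega_1)$ at its remaining endpoint yields a connected $Q$.

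Combining the two parts, $Q$ is a connected subgraph of $\isubg{q}{\tau_0}$ containing $\omega_0$, hence lies in $K(\isubg{q}{\tau_0}, \omega_0)$, completing the argument. I expect the only point requiring care to be the correct matching of the endpoints of $\tau_0$ against the two $(\succ_r)$-components, but once Proposition~\ref{prop:connect-two-comp} is applied this is routine bookkeeping rather than a genuine obstacle.
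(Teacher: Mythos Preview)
Your argument is correct and follows the same route as the paper: the paper simply states that Fact~\ref{fact:P-subgraph} follows from Fact~\ref{fact:tau-0-q-minimum-in-Q}, leaving the connectivity of $Q$ and the fact that $\omega_0 \in Q$ implicit from the construction of $Q = P \cup \{\tau_0\} \cup K(\subg{r}{\tau_0}, \omega_1)$. You have spelled out those implicit steps carefully, including the endpoint-matching for $\tau_0$, but the underlying idea is identical.
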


\begin{fact}\label{fact:case-1-omega-0-omega-1-differnt-cc}
  $K(\subg{q}{\tau_0}, \omega_0)$ and $K(\subg{q}{\tau_0}, \omega_1)$ are different connected components
  in $\subg{q}{\tau_0}$.
\end{fact}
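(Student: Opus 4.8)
The plan is to produce an explicit cut of $\subg{q}{\tau_0}$ separating $\omega_0$ from $\omega_1$, and the natural separator is the vertex set $A := \Kv(\subg{r}{\tau_0}, \omega_0)$ of the optimal volume of $(\tau_0,\omega_0)$ with respect to $r$. First I would record that $\omega_1 \notin A$: by Proposition~\ref{prop:connect-two-comp} the simplex $\omega_0$ is the $(\preceq_r)$-maximum vertex of $A$, whereas $\omega_1 \succ_r \omega_0$ by \eqref{eq:omega-1-succ-omega-0}, so $\omega_1$ cannot lie in $A$. Consequently it suffices to prove that $A$ is a union of connected components of $\subg{q}{\tau_0}$; for then the component $\Kv(\subg{q}{\tau_0},\omega_0)$ is contained in $A$, which excludes $\omega_1$, and the two components $K(\subg{q}{\tau_0},\omega_0)$ and $K(\subg{q}{\tau_0},\omega_1)$ must be distinct.

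To show that $A$ is closed in $\subg{q}{\tau_0}$, I would verify that no edge of $\subg{q}{\tau_0}$ has exactly one endpoint in $A$; equivalently, every $(n-1)$-simplex $\tau$ whose two cofaces split between $A$ and its complement satisfies $\tau \preceq_q \tau_0$ and is therefore absent from $\subg{q}{\tau_0}$. The starting observation is that any such boundary edge already satisfies $\tau \preceq_r \tau_0$: if instead $\tau \succ_r \tau_0$, then both cofaces of $\tau$ exceed $\tau_0$ in $\preceq_r$ (each contains $\tau$, so \eqref{eq:simplices-ordering} applies), making $\tau$ an edge of $\subg{r}{\tau_0}$ incident to the component $A$, which would force both its endpoints into $A$ and contradict that $\tau$ leaves $A$.

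It remains to promote $\tau \preceq_r \tau_0$ to $\tau \preceq_q \tau_0$, and here the two branches of the perturbation \eqref{eq:def-hatq-case-1} must be treated separately. When $\tau \in X^{(n-1)} \setminus Q_{\mathrm E}$ the level drops, so $\hat{q}(\tau) = \hat{r}(\tau) - \eta \le \hat{r}(\tau_0) - \eta < \hat{r}(\tau_0) + \eta = \hat{q}(\tau_0)$, giving $\tau \prec_q \tau_0$ by the first clause of \eqref{eq:def-q-case-1}. The delicate branch is $\tau \in Q_{\mathrm E}$, where the level is \emph{raised} by $\eta$; this is precisely the regime in which one might fear that $A$ and $\omega_1$ get reconnected, and I expect it to be the main obstacle. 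Its resolution is combinatorial rather than metric: among the edges of $Q = P \cup \{\tau_0\} \cup K(\subg{r}{\tau_0},\omega_1)$, those of $P$ lie entirely inside $A$ (a path in $\subg{r}{\tau_0}$ issuing from $\omega_0$ stays in the component $A$) and those of $K(\subg{r}{\tau_0},\omega_1)$ lie entirely inside the disjoint component $\Kv(\subg{r}{\tau_0},\omega_1)$, so the unique element of $Q_{\mathrm E}$ with exactly one coface in $A$ is $\tau_0$ itself, which satisfies $\tau_0 \preceq_q \tau_0$ trivially. Thus no boundary edge of $A$ in $Q_{\mathrm E}$ sits strictly above $\tau_0$, and Fact~\ref{fact:order-coincidence-q} (that $\preceq_q$ agrees with $\preceq_r$ on $Q_{\mathrm E}$) guarantees that this conclusion is stable under the order $\preceq_q$.

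Combining the two branches, every boundary edge of $A$ satisfies $\tau \preceq_q \tau_0$ and so is removed in passing to $\subg{q}{\tau_0}$. Hence $A$ carries no $\subg{q}{\tau_0}$-edge to its complement, the component of $\omega_0$ is confined to $A$, and since $\omega_1 \notin A$ the components $K(\subg{q}{\tau_0},\omega_0)$ and $K(\subg{q}{\tau_0},\omega_1)$ are different, as claimed. I note that the only genuinely nontrivial point is the $Q_{\mathrm E}$ bookkeeping above; the metric comparisons and the reduction to a closed vertex set are routine once $A$ is fixed as the separator.
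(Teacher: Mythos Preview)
Your argument is correct. You fix the separator $A=\Kv(\subg{r}{\tau_0},\omega_0)$ and show that no edge of $\subg{q}{\tau_0}$ crosses out of $A$, handling the two branches $\tau\in Q_{\mathrm E}$ and $\tau\notin Q_{\mathrm E}$ of the perturbation \eqref{eq:def-hatq-case-1} separately; the $Q_{\mathrm E}$ case reduces, as you observe, to the single edge $\tau_0$.

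The paper's proof takes a different route: it argues by contradiction, assuming a path $Q'$ from $\omega_0$ to $\omega_1$ in $\subg{q}{\tau_0}$ and showing that every edge $\tau$ on $Q'$ satisfies $\tau\succ_r\tau_0$, using only the uniform bound $|\hat q(\tau)-\hat r(\tau)|\le\eta$ together with $\hat q(\tau_0)=\hat r(\tau_0)+\eta$ and the definition \eqref{eq:def-q-case-1}. Thus $Q'$ is already a path in $\subg{r}{\tau_0}$, contradicting the separation of $\omega_0$ and $\omega_1$ there. The paper's path argument is a bit shorter because it treats all edges uniformly and never splits on membership in $Q_{\mathrm E}$; your cut argument is more explicit about what the separator is and makes transparent why the particular choice of $Q$ (with $P$ inside $A$ and $K(\subg{r}{\tau_0},\omega_1)$ outside) matters. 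Both approaches are sound and essentially dual to one another.
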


\begin{proof}
  We assume that $K(\subg{q}{\tau_0}, \omega_0) = K(\subg{q}{\tau_0}, \omega_1)$ and this will lead to a contradiction.
  From the assumption, there is a path $Q'$ from $\omega_0$ to $\omega_1$ in $\subg{q}{\tau_0}$.
  Any edge $\tau$ on $Q'$ satisfies $\tau \succ_q \tau_0$ and so $\hat{q}(\tau) \geq \hat{q}(\tau_0)$. Therefore,
  since $|\hat{r}(\tau)  - \hat{q}(\tau)| \leq \eta$ and $\hat{q}(\tau_0) = \hat{r}(\tau_0) + \eta$,
  the inequality $\hat{r}(\tau) \geq \hat{r}(\tau_0)$ holds for any $\tau \in Q_{\mathrm{E}}'$,
  where $Q_{\mathrm{E}}'$ is the set of all edges on $Q'$.

  We conclude that $\tau \succ_r \tau_0$ for any $\tau \in Q_{\mathrm{E}}'$
  from \eqref{eq:def-q-case-1}, $\tau \succ_q \tau_0$, and $\hat{r}(\tau) \geq \hat{r}(\tau_0)$.
  This means that $Q'$ is a subgraph
  of $\subg{r}{\tau_0}$. However, this contradicts the fact that $\omega_0$ and $\omega_1$ are contained in
  different connected components of $\subg{r}{\tau_0}$.
\end{proof}

From Facts~\ref{fact:order-coincidence-q}, \ref{fact:P-subgraph}, and \ref{fact:case-1-omega-0-omega-1-differnt-cc}, and \eqref{eq:omega-1-succ-omega-0},
we have the following fact.
\begin{fact} \label{fact:case-1-tau-q-omega-0}
  $\tau_{q,\omega_0} = \tau_0$
\end{fact}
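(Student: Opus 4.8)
The plan is to read off the pairing of $\omega_0$ in $\bdsp_{n-1}(\X_q)$ from the merge-tree description of Theorem~\ref{thm:vochd-alg} together with Proposition~\ref{prop:connect-two-comp}. Running Algorithm~\ref{alg:volopt-hd-compute} on $\X_q$, the $n$-simplex $\omega_0$ is paired with the edge $\tau_0$ exactly when, at the instant $\tau_0$ is processed in $(\preceq_q)$-descending order, its two cofaces lie in two \emph{distinct} connected components of $\subg{q}{\tau_0}$, the component containing $\omega_0$ has $\omega_0$ as its $(\preceq_q)$-maximum vertex (its root), and the other component has a strictly $(\preceq_q)$-larger root, so that $\omega_0$ is the smaller root that dies. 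Since every $n$-simplex is a death simplex in this setting, it therefore suffices to verify these three conditions for the filtration $\X_q$, which will force $\tau_{q,\omega_0}=\tau_0$.

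First I would show that $\tau_0$ merges $K(\subg{q}{\tau_0},\omega_0)$ and $K(\subg{q}{\tau_0},\omega_1)$. Every simplex of the path $P$ and of $K(\subg{r}{\tau_0},\omega_1)$ is an edge in $Q_{\mathrm{E}}$ or an $n$-simplex, and each is $\succ_r \tau_0$; by the definition \eqref{eq:def-hatq-case-1} its $\hat{q}$-value is raised by $\eta$, so it stays $\succ_q \tau_0$. Hence $P$ and $K(\subg{r}{\tau_0},\omega_1)$ are subgraphs of $\subg{q}{\tau_0}$ (consistently with Fact~\ref{fact:P-subgraph}), which places the two cofaces $\omega_e,\omega_e'$ of $\tau_0$ into $K(\subg{q}{\tau_0},\omega_0)$ and $K(\subg{q}{\tau_0},\omega_1)$ respectively; these are distinct by Fact~\ref{fact:case-1-omega-0-omega-1-differnt-cc}. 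Moreover $\omega_1 \succ_r \omega_0$ by \eqref{eq:omega-1-succ-omega-0}, and since $\preceq_q$ agrees with $\preceq_r$ on $V_X$ (Fact~\ref{fact:order-coincidence-q}) the root of $K(\subg{q}{\tau_0},\omega_1)$ is $\succeq_q \omega_1 \succ_q \omega_0$, giving the strictly-larger-root condition.

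The hard part is showing that $\omega_0$ is the $(\preceq_q)$-maximum of its own component $K(\subg{q}{\tau_0},\omega_0)$. The perturbation lowers every edge outside $Q_{\mathrm{E}}$ by $\eta$, so a priori the $q$-component could reach simplices that were $\preceq_r \tau_0$ and swallow an $n$-simplex that is $(\preceq_r)$-larger than $\omega_0$. The key lemma to exclude this is the vertex inclusion $\Kv(\subg{q}{\tau_0},\omega_0) \subseteq \Kv(\subg{r}{\tau_0},\omega_0)$. I would prove it by verifying that any simplex $\sigma \succ_q \tau_0$ occurring in this component also satisfies $\sigma \succ_r \tau_0$: if $\sigma \in Q_{\mathrm{E}}$ or $\sigma$ is an $n$-simplex, its value was raised by $\eta$, so $\hat{r}(\sigma)\ge \hat{r}(\tau_0)$ and injectivity gives $\sigma \succ_r \tau_0$; if $\sigma \notin Q_{\mathrm{E}}$, its value was lowered by $\eta$, so $\hat{r}(\sigma) > \hat{r}(\tau_0)+2\eta > \hat{r}(\tau_0)$. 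Thus the whole component is a subgraph of $\subg{r}{\tau_0}$ and, containing $\omega_0$, lies inside $K(\subg{r}{\tau_0},\omega_0)$. Because $\omega_0$ is the $(\preceq_r)$-maximum $n$-simplex of $K(\subg{r}{\tau_0},\omega_0)$ by Proposition~\ref{prop:connect-two-comp}, and $\preceq_q=\preceq_r$ on $V_X$, it remains the $(\preceq_q)$-maximum of the smaller set.

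Putting the three conditions together, Theorem~\ref{thm:vochd-alg} yields $(\tau_0,\omega_0)\in \bdsp_{n-1}(\X_q)$, that is $\tau_{q,\omega_0}=\tau_0$. I expect the component-control inclusion of the third paragraph to be the main obstacle, since it is the only place where the $\pm\eta$ reshaping of the filtration must be controlled; the remaining conditions follow routinely by transporting $r$-order facts through Fact~\ref{fact:order-coincidence-q} and the level shifts prescribed in \eqref{eq:def-hatq-case-1}.
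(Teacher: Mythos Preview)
Your argument is correct and follows the same line as the paper, which simply cites Facts~\ref{fact:order-coincidence-q}, \ref{fact:P-subgraph}, \ref{fact:case-1-omega-0-omega-1-differnt-cc} and \eqref{eq:omega-1-succ-omega-0} without further detail. You have in fact made explicit a step the paper leaves implicit: verifying that $\omega_0$ is the $(\preceq_q)$-maximum vertex of $K(\subg{q}{\tau_0},\omega_0)$, via the inclusion $\Kv(\subg{q}{\tau_0},\omega_0)\subseteq \Kv(\subg{r}{\tau_0},\omega_0)$. This is indeed needed, and your reduction to Proposition~\ref{prop:connect-two-comp} together with Fact~\ref{fact:order-coincidence-q} is the right way to finish.

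One small correction: the appeal to ``injectivity'' of $\hat{r}$ is not valid in the order-with-levels setting, where distinct simplices may share the same level. When $\sigma\in Q_{\mathrm{E}}$ or $\sigma\in V_X$ you only get $\hat{r}(\sigma)\geq \hat{r}(\tau_0)$ from $\sigma\succ_q\tau_0$, not strict inequality. The conclusion $\sigma\succ_r\tau_0$ in the equality case must instead be read off from the tie-breaking clauses in the definition \eqref{eq:def-q-case-1} of $\preceq_q$ (face inclusion or fallback to $\preceq_r$), exactly as is done in the proof of Fact~\ref{fact:case-1-omega-0-omega-1-differnt-cc}. With that adjustment your argument goes through unchanged.
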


Next, we examine $\tilde{\tau}$.
\begin{fact}\label{fact:q-tau0-ge-tau}
  $\tau_0 \succ_q \tilde{\tau}$.
\end{fact}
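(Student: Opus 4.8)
The plan is to reduce the order comparison $\tau_0 \succ_q \tilde{\tau}$ to a comparison of $\hat{q}$-values: by the first clause of \eqref{eq:def-q-case-1} it suffices to prove the strict inequality $\hat{q}(\tilde{\tau}) < \hat{q}(\tau_0)$. Since $\tau_0 \in Q_{\mathrm{E}}$, the definition \eqref{eq:def-hatq-case-1} already fixes $\hat{q}(\tau_0) = \hat{r}(\tau_0) + \eta$, so the entire argument hinges on a single membership question, namely whether $\tilde{\tau} \in Q_{\mathrm{E}}$. First I would show $\tilde{\tau} \notin Q_{\mathrm{E}}$, which forces $\hat{q}(\tilde{\tau}) = \hat{r}(\tilde{\tau}) - \eta$ by the rule for $(n-1)$-simplices outside $Q_{\mathrm{E}}$ in \eqref{eq:def-hatq-case-1}.

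To settle this, I would first locate the two endpoints of $\tilde{\tau}$. By \eqref{eq:tau-start}, $\tilde{\omega}$ and $\omega_0$ lie in distinct components of $\subg{r}{\tilde{\tau}}$ but in a common component of $\isubg{r}{\tilde{\tau}}$; since $\isubg{r}{\tilde{\tau}}$ differs from $\subg{r}{\tilde{\tau}}$ only by the single extra edge $\tilde{\tau}$, that edge must join $K(\subg{r}{\tilde{\tau}}, \omega_0)$ to $K(\subg{r}{\tilde{\tau}}, \tilde{\omega})$. The key transport step is the inclusion $\subg{r}{\tilde{\tau}} \subseteq \subg{r}{\tau_0}$ (valid because $\tau_0 \prec_r \tilde{\tau}$), which, combined with the standing assumption $\tilde{\omega} \in \OV(r, \omega_0) = \Kv(\subg{r}{\tau_0}, \omega_0)$, shows that both of these smaller components sit inside $K(\subg{r}{\tau_0}, \omega_0)$. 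I therefore expect to conclude that both endpoints of $\tilde{\tau}$ belong to $K(\subg{r}{\tau_0}, \omega_0)$.

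With the endpoints located, ruling out $\tilde{\tau} \in Q_{\mathrm{E}}$ becomes a short case check against $Q = P \cup \{\tau_0\} \cup K(\subg{r}{\tau_0}, \omega_1)$: we have $\tilde{\tau} \neq \tau_0$ from $\tau_0 \prec_r \tilde{\tau}$; $\tilde{\tau}$ cannot be an edge of $K(\subg{r}{\tau_0}, \omega_1)$ because $\Kv(\subg{r}{\tau_0}, \omega_1)$ is disjoint from $\Kv(\subg{r}{\tau_0}, \omega_0)$ by Proposition~\ref{prop:connect-two-comp} while both endpoints of $\tilde{\tau}$ lie in the latter; and $\tilde{\tau}$ cannot be an edge of $P$ because one of its endpoints lies in $K(\subg{r}{\tilde{\tau}}, \tilde{\omega})$, which $P$ was chosen (in Case 1) to avoid. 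The proof then finishes with the arithmetic $\hat{q}(\tau_0) - \hat{q}(\tilde{\tau}) = (\hat{r}(\tau_0)+\eta) - (\hat{r}(\tau_0)+\epsilon_*-\eta) = 2\eta - \epsilon_* > 0$, using $\eta > \epsilon_*/2$.

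The hard part will be the endpoint-location step of the second paragraph. I want to be careful that adjoining the lone edge $\tilde{\tau}$ merges exactly the components of $\omega_0$ and $\tilde{\omega}$ (so that $\tilde{\tau}$ genuinely has one endpoint in each), and that the inclusion $\subg{r}{\tilde{\tau}} \subseteq \subg{r}{\tau_0}$ together with $\tilde{\omega} \in \Kv(\subg{r}{\tau_0}, \omega_0)$ really places both endpoints in the single component $K(\subg{r}{\tau_0}, \omega_0)$. Once this is secured, the disjointness from the $\omega_1$-component and the avoidance property of $P$ are immediate, and the rest is the displayed one-line computation.
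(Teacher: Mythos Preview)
Your proposal is correct and follows the same approach as the paper: show $\tilde{\tau}\notin Q_{\mathrm{E}}$, read off the $\hat q$-values from \eqref{eq:def-hatq-case-1}, and conclude via $2\eta-\epsilon_*>0$. In fact your write-up is more complete than the paper's, which simply asserts ``$\tau_0\in Q_{\mathrm{E}}$ and $\tilde{\tau}\notin Q_{\mathrm{E}}$'' and goes straight to the arithmetic; your endpoint-location argument and three-case check against $Q=P\cup\{\tau_0\}\cup K(\subg{r}{\tau_0},\omega_1)$ supply exactly the justification the paper omits.
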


\begin{proof}
  Since $\tau_0 \in Q_{\mathrm{E}}$ and $\tilde{\tau} \not \in Q_{\mathrm{E}}$, we have
  \begin{align*}
    \hat{q}(\tau_0) - \hat{q}(\tilde{\tau}) &= (\hat{r}(\tau_0) + \eta) - (\hat{r}(\tilde{\tau}) - \eta) \\
                                            &= \hat{r}(\tau_0) - \hat{r}(\tilde{\tau}) - 2\eta \\
                                            &= \epsilon_* - 2\eta > 0.
  \end{align*}
  The inequality leads to $\tau_0 \succ_q \tilde{\tau}$ since $q$ is an order with levels.
\end{proof}

% From Fact~\ref{fact:tau-0-q-minimum-in-Q} and \ref{fact:q-tau0-ge-tau}, the following fact is easy to show.
% \begin{fact}\label{fact:P-tau-tau0-subgraph}
%   $Q$ is a subgraph of $\subg{q}{\tilde{\tau}}$.
% \end{fact}

% Fact~\ref{fact:P-tau-tau0-subgraph} immediately leads the folloiwng.
% \begin{fact}\label{fact:P-subgraph-cc}
%   $Q$ is a subgraph of $K(\subg{q}{\tilde{\tau}}, \omega_0) $
% \end{fact}

% \begin{proof}
%   We consider the subgraph $\subg{q}{\tau_0}$.
% \end{proof}

We can prove the following fact in a similar way to Fact~\ref{fact:case-1-omega-0-omega-1-differnt-cc}.
\begin{fact}\label{fact:tilde-omega-omega-0-differnt-cc}
  $\tilde{\omega} \not \in K(\subg{q}{\tilde{\tau}}, \omega_0)$.
\end{fact}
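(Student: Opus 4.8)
The plan is to mirror the proof of Fact~\ref{fact:case-1-omega-0-omega-1-differnt-cc}: assume for contradiction that $\tilde{\omega} \in \Kv(\subg{q}{\tilde{\tau}}, \omega_0)$, choose a path $Q''$ from $\omega_0$ to $\tilde{\omega}$ inside $\subg{q}{\tilde{\tau}}$ (so every edge $\tau$ on $Q''$ satisfies $\tau \succ_q \tilde{\tau}$), and attempt to transfer this path into $\subg{r}{\tilde{\tau}}$ to contradict $\tilde{\omega} \not\in \Kv(\subg{r}{\tilde{\tau}}, \omega_0)$ from \eqref{eq:tau-start}. For an edge $\tau \not\in Q_{\mathrm{E}}$ the transfer is immediate: since $\tilde{\tau} \not\in Q_{\mathrm{E}}$ gives $\hat{q}(\tilde{\tau}) = \hat{r}(\tilde{\tau}) - \eta$ and $\hat{q}(\tau) = \hat{r}(\tau) - \eta$ by \eqref{eq:def-hatq-case-1}, the relation $\tau \succ_q \tilde{\tau}$ forces $\hat{r}(\tau) \geq \hat{r}(\tilde{\tau})$, and together with Fact~\ref{fact:order-coincidence-q} (the orders coincide on $X^{(n-1)} \setminus Q_{\mathrm{E}}$) this yields $\tau \succ_r \tilde{\tau}$.

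The hard part, and exactly where the pure mirror of Fact~\ref{fact:case-1-omega-0-omega-1-differnt-cc} breaks, is the boosted edges $\tau \in Q_{\mathrm{E}}$: because $\tilde{\tau}$ is lowered while these edges are raised, $\tau \succ_q \tilde{\tau}$ only gives $\hat{r}(\tau) \geq \hat{r}(\tilde{\tau}) - 2\eta$, so such an edge may genuinely lie below $\tilde{\tau}$ in $\preceq_r$ and $Q''$ need not be a subgraph of $\subg{r}{\tilde{\tau}}$. I would resolve this by a structural disjointness argument rather than an order comparison. First I would show that the whole subgraph $Q = P \cup \{\tau_0\} \cup K(\subg{r}{\tau_0}, \omega_1)$ is disjoint from $K(\subg{r}{\tilde{\tau}}, \tilde{\omega})$: the path $P$ avoids it by the Case 1 hypothesis; the edge $\tau_0$ avoids it because $\tau_0 \prec_r \tilde{\tau}$ and its two endpoints lie in $P$ and in $K(\subg{r}{\tau_0}, \omega_1)$; and $K(\subg{r}{\tau_0}, \omega_1)$ avoids it because $\tilde{\tau} \succ_r \tau_0$ makes $\subg{r}{\tilde{\tau}}$ a subgraph of $\subg{r}{\tau_0}$, so $K(\subg{r}{\tilde{\tau}}, \tilde{\omega}) \subseteq K(\subg{r}{\tau_0}, \tilde{\omega}) = K(\subg{r}{\tau_0}, \omega_0)$, which is a different $\subg{r}{\tau_0}$-component from $K(\subg{r}{\tau_0}, \omega_1)$ by Proposition~\ref{prop:connect-two-comp}.

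With this disjointness in hand, I would locate the crossing edge of $Q''$. Since $\omega_0 \not\in K(\subg{r}{\tilde{\tau}}, \tilde{\omega})$ (the components of $\omega_0$ and $\tilde{\omega}$ differ in $\subg{r}{\tilde{\tau}}$) while $\tilde{\omega}$ lies in it, the path $Q''$ must contain an edge $\tau_c$ with exactly one endpoint inside $K(\subg{r}{\tilde{\tau}}, \tilde{\omega})$. Maximality of the connected component forces $\tau_c \prec_r \tilde{\tau}$ (an edge of $\subg{r}{\tilde{\tau}}$ incident to the component would keep both endpoints inside), and then the order analysis above shows that $\tau_c \succ_q \tilde{\tau}$ is possible only if $\tau_c$ is boosted, i.e. $\tau_c \in Q_{\mathrm{E}}$. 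But an edge of $Q_{\mathrm{E}}$ has both endpoints among the vertices of $Q$, which are disjoint from $K(\subg{r}{\tilde{\tau}}, \tilde{\omega})$, contradicting that $\tau_c$ has an endpoint inside it. This contradiction completes the proof, so $\tilde{\omega} \not\in \Kv(\subg{q}{\tilde{\tau}}, \omega_0)$. The main obstacle throughout is precisely the interaction with $Q_{\mathrm{E}}$, which is why the Case 1 hypothesis that $P$ avoids $K(\subg{r}{\tilde{\tau}}, \tilde{\omega})$ is indispensable.
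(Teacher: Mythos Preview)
Your proof is correct. The paper itself gives no argument for this fact beyond the remark that it ``can be proved in a similar way to Fact~\ref{fact:case-1-omega-0-omega-1-differnt-cc}''. You have correctly noticed that this remark is somewhat optimistic: in Fact~\ref{fact:case-1-omega-0-omega-1-differnt-cc} the reference simplex $\tau_0$ lies in $Q_{\mathrm{E}}$ and is therefore boosted, so $\hat{q}(\tau_0)=\hat{r}(\tau_0)+\eta$ lets every edge on the $q$-path be pushed above $\tau_0$ in $\preceq_r$ uniformly. Here the reference simplex $\tilde{\tau}$ is lowered ($\hat{q}(\tilde{\tau})=\hat{r}(\tilde{\tau})-\eta$), and for boosted edges $\tau\in Q_{\mathrm{E}}$ the inequality $\tau\succ_q\tilde{\tau}$ only yields $\hat{r}(\tau)\geq\hat{r}(\tilde{\tau})-2\eta$, which is not enough. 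Your structural fix is exactly the right one: the Case~1 hypothesis guarantees that the vertex set of $Q$ is disjoint from $K(\subg{r}{\tilde{\tau}},\tilde{\omega})$, so the first crossing edge $\tau_c$ of the hypothetical $q$-path into $K(\subg{r}{\tilde{\tau}},\tilde{\omega})$ cannot be a boosted edge; but the non-boosted analysis then forces $\tau_c\succ_r\tilde{\tau}$, contradicting maximality of the component. One small point worth making explicit: you use $\tilde{\tau}\notin Q_{\mathrm{E}}$, which follows because one endpoint of $\tilde{\tau}$ lies in $K(\subg{r}{\tilde{\tau}},\tilde{\omega})$, a set disjoint from both $P$ (Case~1 hypothesis) and $K(\subg{r}{\tau_0},\omega_1)$, while $\tilde{\tau}\neq\tau_0$ since $\tau_0\prec_r\tilde{\tau}$.

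In short, your argument is a genuine completion of what the paper leaves implicit, and the extra work you put in for the $Q_{\mathrm{E}}$ edges is not avoidable.
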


From Facts~\ref{fact:case-1-tau-q-omega-0}, \ref{fact:q-tau0-ge-tau}, and \ref{fact:tilde-omega-omega-0-differnt-cc}, the following fact is easily established.
\begin{fact}
  $\tilde{\omega} \not \in K(\subg{q}{\tau_{q, \omega_0}}, \omega_0)$.
\end{fact}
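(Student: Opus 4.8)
The plan is to chain the three cited facts together, using only the elementary monotonicity of the subgraphs $\subg{q}{\cdot}$ with respect to the total order $\preceq_q$. The target asserts that the vertex $\tilde{\omega}$ avoids the connected component $K(\subg{q}{\tau_{q,\omega_0}}, \omega_0)$. First I would eliminate the symbol $\tau_{q,\omega_0}$: Fact~\ref{fact:case-1-tau-q-omega-0} identifies it with $\tau_0$, so the goal reduces to showing $\tilde{\omega} \not\in K(\subg{q}{\tau_0}, \omega_0)$.

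Next I would exploit the nesting of the thresholded subgraphs. Fact~\ref{fact:q-tau0-ge-tau} gives $\tau_0 \succ_q \tilde{\tau}$, i.e.\ $\tilde{\tau} \prec_q \tau_0$. Since $\subg{q}{\sigma}$ consists precisely of those simplices that are strictly $(\succ_q)$-above $\sigma$, transitivity shows that every vertex and edge of $\subg{q}{\tau_0}$ also belongs to $\subg{q}{\tilde{\tau}}$; hence $\subg{q}{\tau_0}$ is a subgraph of $\subg{q}{\tilde{\tau}}$. Connected components respect this inclusion, because any path inside the smaller graph is automatically a path inside the larger one, so $K(\subg{q}{\tau_0}, \omega_0) \subseteq K(\subg{q}{\tilde{\tau}}, \omega_0)$.

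Finally I would invoke Fact~\ref{fact:tilde-omega-omega-0-differnt-cc}, which states $\tilde{\omega} \not\in K(\subg{q}{\tilde{\tau}}, \omega_0)$. Combined with the inclusion just established, $\tilde{\omega}$ cannot belong to the smaller component either, giving $\tilde{\omega} \not\in K(\subg{q}{\tau_0}, \omega_0) = K(\subg{q}{\tau_{q,\omega_0}}, \omega_0)$, which is exactly the claim.

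I do not expect a genuine obstacle: the entire argument rests on the single observation that raising the threshold shrinks the subgraph, so non-membership propagates from a coarser threshold to a finer one. The one place that demands care is the direction of the order in Fact~\ref{fact:q-tau0-ge-tau}; one must track that $\tau_0 \succ_q \tilde{\tau}$ makes $\subg{q}{\tau_0}$ the smaller graph, so that the non-membership at the threshold $\tilde{\tau}$ descends to the threshold $\tau_0$ rather than the reverse.
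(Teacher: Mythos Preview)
Your argument is correct and matches the paper's approach exactly: the paper simply states that the fact follows from Facts~\ref{fact:case-1-tau-q-omega-0}, \ref{fact:q-tau0-ge-tau}, and \ref{fact:tilde-omega-omega-0-differnt-cc}, and you have spelled out precisely how, via the inclusion $\subg{q}{\tau_0}\subseteq\subg{q}{\tilde{\tau}}$ induced by $\tau_0\succ_q\tilde{\tau}$. Your remark about tracking the direction of the order is the only subtle point, and you handled it correctly.
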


Finally, since $\OV(q, \omega_0) = \Kv(\subg{q}{\tau_{q, \omega_0}}, \omega_0)$, we conclude that $\tilde{\omega} \not \in \OV(q, \omega_0)$.

\subsubsection{Case 2}

In Case 2, there is a path $T$ from $K(\subg{r}{\tilde{\tau}}, \tilde{\omega})$ to $\omega_e$ without passing through $K(\subg{r}{\tilde{\tau}}, \omega_0)$.
Figure~\ref{fig:case-2-figure} shows the relationship between connected components and paths between the components in this case.

\begin{figure}[htbp]
  \centering
  \includegraphics[width=0.8\hsize]{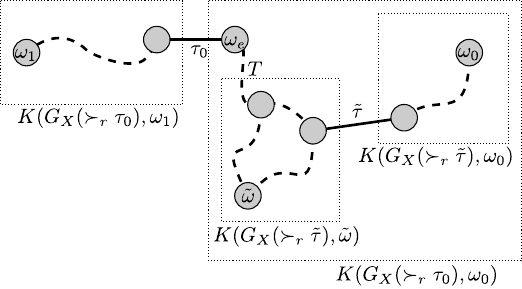}
  \caption{
    The relationship between connected components and path $T$ in Case 2.
    Circles represent vertices, rectangles by dotted lines represent
    connected components, solid lines represent edges between connected
    components, and dashed curves represent paths.
  }
  \label{fig:case-2-figure}
\end{figure}

We now define a function $\hat{s}$ on $X$ as follows:
\begin{equation}\label{eq:def-hats}
  \hat{s}(\sigma) = \left\{
    \begin{array}{ll}
      \hat{r}(\sigma) + \eta & \text{if } \sigma \in X^{(n)},  \\
      \hat{r}(\sigma) + \eta & \text{if } \sigma \in S_{\mathrm{E}}, \\
      \hat{r}(\sigma) - \eta & \text{if } \sigma \in X^{(n-1)} \backslash S_{\mathrm{E}}, \\
      \hat{r}(\sigma) - \eta & \text{if } \sigma \in X^{(0)} \cup \cdots \cup X^{(n-2)},
    \end{array} \right.
\end{equation}
where $S = K(\subg{r}{\tilde{\tau}}, \tilde{\omega}) \cup T \cup \{\tau_0\} \cup K(\subg{r}{\tau_0}, \omega_1)$ and
$S_{\mathrm{E}}$ is the set of all edges of $S$.

We also define the total order on $X$, $\preceq_s$, as follows:
\begin{equation}
  \label{eq:def-s}
  \begin{aligned}
    &\sigma \prec_s \sigma' \text{ if and only if} \\
    &\hat{s}(\sigma) < \hat{s}(\sigma'), \text{ or }\\
    &\hat{s}(\sigma) = \hat{s}(\sigma') \text{ and } \sigma \subsetneq \sigma', \text{ or } \\
    &\hat{s}(\sigma) = \hat{s}(\sigma') \text{ and } \sigma \cap \sigma' = \emptyset \text{ and }
    \sigma \prec_r \sigma'.
  \end{aligned}
\end{equation}

We can prove the following fact in a similar way to Facts~\ref{fact:case-1-q-owl} and \ref{fact:q-in-R-epsilon}.
\begin{fact}
  $s = (\preceq_s, \hat{s})$ is an order with levels on $X$, and $s \in \mathcal{R}_\epsilon$.
\end{fact}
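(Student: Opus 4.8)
The plan is to mirror the two Case~1 verifications, namely Fact~\ref{fact:case-1-q-owl} (that $q$ is an order with levels) and Fact~\ref{fact:q-in-R-epsilon} (that $q \in \mathcal{R}_\epsilon$), replacing the set $Q$ and its edge set $Q_{\mathrm{E}}$ throughout by $S$ and $S_{\mathrm{E}}$. This works because $\hat{s}$ and $\preceq_s$ are given by exactly the same formulas as $\hat{q}$ and $\preceq_q$ (with $S_{\mathrm{E}}$ in place of $Q_{\mathrm{E}}$), and because those verifications never use the geometric meaning of $Q$---they use only that $Q_{\mathrm{E}} \subseteq X^{(n-1)}$ is some set of $(n-1)$-simplices, which holds for $S_{\mathrm{E}}$ as well. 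I would spell out the two non-trivial points below.

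First I would check that $\hat{s}$ is a level function, i.e. that $\sigma \subsetneq \sigma'$ implies $\hat{s}(\sigma) \le \hat{s}(\sigma')$. Writing $\hat{s}(\cdot) = \hat{r}(\cdot) \pm \eta$ with the sign $+$ taken exactly on $X^{(n)} \cup S_{\mathrm{E}}$, and using $\hat{r}(\sigma) \le \hat{r}(\sigma')$, the inequality can fail only in the ``bad'' case where $\sigma$ receives $+\eta$ while $\sigma'$ receives $-\eta$, so that the level gap would have to absorb a drop of $2\eta$. This case is excluded purely by dimension: if $\sigma$ receives $+\eta$ then $\sigma \in X^{(n)}$ or $\sigma \in S_{\mathrm{E}} \subseteq X^{(n-1)}$; in the first subcase $\sigma$ has no proper coface $\sigma'$, and in the second any proper coface $\sigma'$ lies in $X^{(n)}$ and hence also receives $+\eta$, not $-\eta$. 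Thus $\hat{s}(\sigma) \le \hat{s}(\sigma')$ always holds. Granting this, $\preceq_s$ is a total order of exactly the same lexicographic form as $\preceq_q$ in \eqref{eq:def-q-case-1}; the first defining condition of an order with levels is immediate from the first clause of \eqref{eq:def-s}, and condition \eqref{eq:simplices-ordering} follows because $\sigma \subsetneq \sigma'$ forces either $\hat{s}(\sigma) < \hat{s}(\sigma')$ or else $\hat{s}(\sigma) = \hat{s}(\sigma')$ with $\sigma \subsetneq \sigma'$, and in both cases $\sigma \prec_s \sigma'$. This gives the analogue of Fact~\ref{fact:case-1-q-owl}.

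Next I would establish $s \in \mathcal{R}_\epsilon$ by repeating the proof of Fact~\ref{fact:q-in-R-epsilon}. The bound $\|\hat{s} - \hat{r}\|_\infty \le \eta < \epsilon/2$ is immediate, since every value moves by exactly $\eta$ and $\eta = (\epsilon_* + \epsilon)/4 < \epsilon/2$ was already recorded. For the $(r,\omega_0)$-order condition, assume $\sigma \prec_r \omega_0$, so $\hat{r}(\sigma) \le \hat{r}(\omega_0)$; since $\omega_0 \in X^{(n)}$ gives $\hat{s}(\omega_0) = \hat{r}(\omega_0) + \eta$ while $\hat{s}(\sigma) \le \hat{r}(\sigma) + \eta$, we obtain $\hat{s}(\omega_0) \ge \hat{s}(\sigma)$. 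The identical four-way case split then yields $\sigma \prec_s \omega_0$, the only delicate branch ($\hat{s}(\omega_0) = \hat{s}(\sigma)$ with $\omega_0 \subsetneq \sigma$) being vacuous because it contradicts $\sigma \prec_r \omega_0$ via \eqref{eq:simplices-ordering}.

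I do not expect a genuine obstacle, since the statement is by design a transcription of the Case~1 facts with $S$ substituted for $Q$. The only step needing care is the level-function check for $\hat{s}$: one must confirm that the $\pm\eta$ perturbation cannot invert a face relation. That is precisely the dimension argument above, and it carries over unchanged from $Q_{\mathrm{E}}$ to $S_{\mathrm{E}}$ because it never refers to how $S$ (or $Q$) was constructed, only to $S_{\mathrm{E}} \subseteq X^{(n-1)}$.
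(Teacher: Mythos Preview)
Your proposal is correct and follows exactly the approach the paper intends: the paper's own proof is simply the sentence ``We can prove the following fact in a similar way to Facts~\ref{fact:case-1-q-owl} and \ref{fact:q-in-R-epsilon},'' and you have carried out that transcription with $S$ in place of $Q$, correctly isolating the dimension argument showing the $\pm\eta$ perturbation cannot invert a face relation.
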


The following fact arises directly from the definition of $\hat{s}$ and $\preceq_s$.
\begin{fact}\label{fact:order-coincidence-s}
  The order $\preceq_s$ coincides with $\preceq_r$ on $V_X$. The same is true on $Q_{\mathrm{E}}$ or $X^{(n-1)} \backslash Q_{\mathrm{E}}$.
\end{fact}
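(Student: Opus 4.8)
The plan is to exploit the one structural feature of the definition \eqref{eq:def-hats} of $\hat s$: the passage from $\hat r$ to $\hat s$ adds the same constant $+\eta$ to every element of $V_X$ and of $S_{\mathrm{E}}$, and the same constant $-\eta$ to every element of $X^{(n-1)}\setminus S_{\mathrm{E}}$ (these are the three sets playing here the role of $V_X$, $Q_{\mathrm{E}}$, $X^{(n-1)}\setminus Q_{\mathrm{E}}$ in Fact~\ref{fact:order-coincidence-q}). Consequently, if $Y$ denotes any one of these three sets and $\sigma, \sigma' \in Y$, then $\hat s(\sigma) < \hat s(\sigma') \iff \hat r(\sigma) < \hat r(\sigma')$ and $\hat s(\sigma) = \hat s(\sigma') \iff \hat r(\sigma) = \hat r(\sigma')$, since on $Y$ the map $\hat s$ is just $\hat r$ shifted by a fixed constant.

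First I would fix such a $Y$ and two distinct $\sigma, \sigma' \in Y$ with $\sigma \prec_r \sigma'$; because both $\preceq_r$ and $\preceq_s$ are total orders, it is enough to deduce $\sigma \prec_s \sigma'$, the converse then following from totality of $\preceq_r$ and antisymmetry of $\preceq_s$. As $r$ is an order with levels, $\sigma \prec_r \sigma'$ forces $\hat r(\sigma) \le \hat r(\sigma')$. If $\hat r(\sigma) < \hat r(\sigma')$ then $\hat s(\sigma) < \hat s(\sigma')$ by the previous paragraph, and the first clause of \eqref{eq:def-s} gives $\sigma \prec_s \sigma'$ immediately. If $\hat r(\sigma) = \hat r(\sigma')$ then $\hat s(\sigma) = \hat s(\sigma')$, so the comparison in $\preceq_s$ is settled by its tie-breaking clauses, which are exactly the containment $\subseteq$ followed by $\preceq_r$.

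The point that makes the tie case work is that every element of $Y$ has the same dimension --- $n$ on $V_X$ (apart from the maximal cell $\omega_\infty$) and $n-1$ on either edge set --- so no two distinct elements of $Y$ are $\subseteq$-comparable. Hence the containment clause of \eqref{eq:def-s} is vacuous on $Y$, and the comparison falls through to the clause governed by $\preceq_r$; since $\sigma \prec_r \sigma'$, this yields $\sigma \prec_s \sigma'$ and closes the case. The element $\omega_\infty$ needs no separate treatment, as it is simultaneously the $(\preceq_r)$- and $(\preceq_s)$-maximum of $V_X$.

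The only delicate point --- and the one I would state explicitly --- is the reading of the last clause of \eqref{eq:def-s} in the tie case. Two equal-dimensional simplices can share a proper face, so they need not be disjoint; for $\preceq_s$ to be the total order asserted after \eqref{eq:def-q-case-1}, this clause must apply to every $\subseteq$-incomparable pair, not merely to disjoint ones. Under that reading the homogeneity of $Y$ does all the work: the $\subseteq$-clause is empty and the order on $Y$ is decided by $\hat s$ (equivalently $\hat r$) together with the $\preceq_r$ fall-through, which is precisely the content of the Fact. The whole argument is the verbatim analogue of the proof of Fact~\ref{fact:order-coincidence-q}, with $S_{\mathrm{E}}$ in place of $Q_{\mathrm{E}}$.
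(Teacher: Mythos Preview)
Your argument is correct and is precisely the direct verification the paper has in mind when it says the fact ``arises directly from the definition of $\hat{s}$ and $\preceq_s$''; no further proof is given there. You also correctly read $S_{\mathrm{E}}$ for the paper's $Q_{\mathrm{E}}$ in the statement (a typo carried over from Case~1) and rightly flag that the clause ``$\sigma \cap \sigma' = \emptyset$'' in \eqref{eq:def-s} must be read as ``$\subseteq$-incomparable'' for $\preceq_s$ to be the total order the paper asserts, which is exactly what the paper's lexicographic justification after \eqref{eq:def-q-case-1} intends.
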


The following fact can be shown in a similar way to Facts~\ref{fact:tau-0-q-minimum-in-Q} and \ref{fact:P-subgraph}.
\begin{fact}\label{fact:S-subgraph}
  $S$ is a subgraph of $K(\isubg{s}{\tau_0}, \tilde{\omega})$.
\end{fact}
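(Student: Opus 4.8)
The plan is to mirror, almost verbatim, the two-step argument used in Case~1 for Facts~\ref{fact:tau-0-q-minimum-in-Q} and \ref{fact:P-subgraph}, with $S$ and $\preceq_s$ now playing the roles of $Q$ and $\preceq_q$. First I would prove the analog of Fact~\ref{fact:tau-0-q-minimum-in-Q}, namely that $\tau_0$ is the $(\preceq_s)$-minimum element of $S$. The crucial observation is that every element of $S$ receives the \emph{same} $+\eta$ shift under $\hat{s}$: each vertex because it is an $n$-simplex (first line of \eqref{eq:def-hats}), and each edge because, by construction, it belongs to $S_{\mathrm{E}}$ (second line); the global maximum $\omega_\infty$, if present, causes no trouble since it stays maximal. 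Consequently $\hat{s}$ restricted to $S$ is just $\hat{r}+\eta$, so $\preceq_s$ and $\preceq_r$ induce the same order on $S$ (this is the content of Fact~\ref{fact:order-coincidence-s} applied to the union $V_X \cup S_{\mathrm{E}}$). It then suffices to check $(\preceq_r)$-minimality of $\tau_0$ in $S$, which holds because every vertex of $S$ lies in $\subg{r}{\tau_0}$ and every edge of $S$ other than $\tau_0$ is $\succ_r \tau_0$: the edges of $K(\subg{r}{\tilde{\tau}}, \tilde{\omega})$ satisfy $\succ_r \tilde{\tau} \succ_r \tau_0$, while those of $T$ and of $K(\subg{r}{\tau_0}, \omega_1)$ lie in $\subg{r}{\tau_0}$.

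Next I would deduce the subgraph statement exactly as in Fact~\ref{fact:P-subgraph}. Minimality of $\tau_0$ gives $\sigma \succeq_s \tau_0$ for every vertex and edge $\sigma$ of $S$, so $S$ is a subgraph of $\isubg{s}{\tau_0}$. It then remains only to observe that $S$ is connected and contains $\tilde{\omega}$, which forces $S$ to lie inside the single connected component $K(\isubg{s}{\tau_0}, \tilde{\omega})$. Connectivity is built into the assembly of $S$: the path $T$ joins $K(\subg{r}{\tilde{\tau}}, \tilde{\omega})$ to the vertex $\omega_e$, and the edge $\tau_0$ joins $\omega_e$ to $K(\subg{r}{\tau_0}, \omega_1)$ — here $\omega_e$ and the other endpoint of $\tau_0$ are precisely the two endpoints supplied by Proposition~\ref{prop:connect-two-comp}. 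Since $\tilde{\omega}$ already sits in the first piece $K(\subg{r}{\tilde{\tau}}, \tilde{\omega}) \subseteq S$, the whole of $S$ lands in the component of $\tilde{\omega}$.

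I do not expect a genuine obstacle here, since the argument is essentially bookkeeping parallel to Case~1; the one point requiring care is the uniform-shift observation. Specifically, I would make sure $S$, viewed as a subgraph of $G_X$, contains only $n$-simplices (its vertices) and $(n-1)$-simplices (its edges), all of the latter lying in $S_{\mathrm{E}}$, so that the piecewise $\pm\eta$ definition of $\hat{s}$ genuinely collapses to a single global $+\eta$ translation on $S$. Once that is verified, the coincidence of $\preceq_s$ and $\preceq_r$ on $S$, the minimality of $\tau_0$, and the connectivity all follow immediately, completing the proof.
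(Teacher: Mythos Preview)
Your proposal is correct and follows exactly the approach the paper indicates: the paper proves Fact~\ref{fact:S-subgraph} by saying it ``can be shown in a similar way to Facts~\ref{fact:tau-0-q-minimum-in-Q} and \ref{fact:P-subgraph},'' and you have faithfully unpacked that two-step argument (uniform $+\eta$ shift on $S$ giving $(\preceq_s)$-minimality of $\tau_0$, then connectivity of $S$ through $\tilde{\omega}$). The only caveat is that Fact~\ref{fact:order-coincidence-s} as stated gives coincidence of $\preceq_s$ and $\preceq_r$ separately on $V_X$ and on $S_{\mathrm{E}}$, not on their union, but your uniform-shift observation handles cross-comparisons directly, so there is no gap.
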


We can show the following fact in the same way as Fact~\ref{fact:q-tau0-ge-tau}.
From the following fact, we have $K(\isubg{s}{\tau_0}, \tilde{\omega}) \subseteq K(\subg{s}{\tilde{\tau}}, \tilde{\omega})$.
\begin{fact}\label{fact:case-2-tau-q-omega-0}
  $\tau_0 \succ_s \tilde{\tau}$.
\end{fact}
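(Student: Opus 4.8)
The plan is to transcribe the proof of Fact~\ref{fact:q-tau0-ge-tau} almost verbatim, swapping the Case~1 data $(\hat{q}, Q_{\mathrm{E}})$ for the Case~2 data $(\hat{s}, S_{\mathrm{E}})$. The statement then collapses to a single comparison of $\hat{s}$-values, once the two memberships $\tau_0 \in S_{\mathrm{E}}$ and $\tilde{\tau} \notin S_{\mathrm{E}}$ are established.

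First I would settle the two memberships. Since $\{\tau_0\}$ is one of the four pieces defining $S = K(\subg{r}{\tilde{\tau}}, \tilde{\omega}) \cup T \cup \{\tau_0\} \cup K(\subg{r}{\tau_0}, \omega_1)$, we have $\tau_0 \in S_{\mathrm{E}}$, so the first branch of \eqref{eq:def-hats} gives $\hat{s}(\tau_0) = \hat{r}(\tau_0) + \eta$. To see $\tilde{\tau} \notin S_{\mathrm{E}}$ I would rule out each piece in turn: $\tilde{\tau}$ is not an edge of $K(\subg{r}{\tilde{\tau}}, \tilde{\omega})$, since every edge there satisfies $\succ_r \tilde{\tau}$ while $\tilde{\tau} \not\succ_r \tilde{\tau}$; $\tilde{\tau} \neq \tau_0$, as $\tau_0 \prec_r \tilde{\tau}$ by \eqref{eq:tau-start}; $\tilde{\tau}$ is not on $T$, because $T$ avoids $K(\subg{r}{\tilde{\tau}}, \omega_0)$ whereas one endpoint of $\tilde{\tau}$ lies in that component; and $\tilde{\tau}$ is not an edge of $K(\subg{r}{\tau_0}, \omega_1)$, since its $\tilde{\omega}$-side endpoint lies in $K(\subg{r}{\tau_0}, \omega_0)$, a connected component distinct from that of $\omega_1$. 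As $\tilde{\tau} \in X^{(n-1)}$, the third branch of \eqref{eq:def-hats} then yields $\hat{s}(\tilde{\tau}) = \hat{r}(\tilde{\tau}) - \eta$.

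The closing step is the same arithmetic as in Fact~\ref{fact:q-tau0-ge-tau}:
\[
  \hat{s}(\tau_0) - \hat{s}(\tilde{\tau}) = (\hat{r}(\tau_0) + \eta) - (\hat{r}(\tilde{\tau}) - \eta) = 2\eta - \epsilon_* > 0,
\]
where the final inequality uses $\eta > \epsilon_*/2$, recorded just after \eqref{eq:tau-start}. Because $s$ is an order with levels, $\hat{s}(\tau_0) > \hat{s}(\tilde{\tau})$ forces $\tau_0 \succ_s \tilde{\tau}$, which is the claim.

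I expect the only genuine content to be the membership $\tilde{\tau} \notin S_{\mathrm{E}}$, and within it the step showing $\tilde{\tau}$ does not lie on $T$. This is where I would check most carefully, since it hinges on reading ``$T$ does not pass through $K(\subg{r}{\tilde{\tau}}, \omega_0)$'' as forbidding every vertex of that component, so that the bridging edge $\tilde{\tau}$, being incident to such a vertex, cannot occur on $T$. Every other step transfers unchanged from the Case~1 computation, and the level-value arithmetic is routine.
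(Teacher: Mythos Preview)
Your proof is correct and follows exactly the approach the paper indicates (``in the same way as Fact~\ref{fact:q-tau0-ge-tau}''), with the roles of $(\hat{q},Q_{\mathrm{E}})$ replaced by $(\hat{s},S_{\mathrm{E}})$. You are in fact more careful than the paper, which asserts the analogous membership $\tilde{\tau}\notin Q_{\mathrm{E}}$ (and hence $\tilde{\tau}\notin S_{\mathrm{E}}$) without spelling out the case analysis; your verification of each piece of $S$ is sound, and your arithmetic $2\eta-\epsilon_*>0$ even corrects a sign typo present in the paper's Case~1 computation.
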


We can prove the following in a similar way to Facts~\ref{fact:case-1-omega-0-omega-1-differnt-cc} and \ref{fact:case-1-tau-q-omega-0} using
Facts~\ref{fact:order-coincidence-s}, \ref{fact:S-subgraph}, and \ref{fact:case-2-tau-q-omega-0}.
\begin{fact}
  $K(\subg{s}{\tilde{\tau}}, \tilde{\omega})$ and $K(\subg{s}{\tilde{\tau}}, \omega_0)$ are different
  connected components in $\subg{s}{\tilde{\tau}}$ and $\tilde{\tau}$ connects the two connected components.
  Therefore, $\tau_{s, \omega_0} = \tilde{\tau}$.
\end{fact}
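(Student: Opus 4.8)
The plan is to follow the two-step template of Facts~\ref{fact:case-1-omega-0-omega-1-differnt-cc} and~\ref{fact:case-1-tau-q-omega-0}, with $\tilde{\tau}$ now playing the role of $\tau_0$ from Case~1. The $\tilde{\omega}$-side is handled immediately by the facts already in hand: Fact~\ref{fact:S-subgraph} places the whole graph $S$ inside $K(\isubg{s}{\tau_0}, \tilde{\omega})$, and since Fact~\ref{fact:case-2-tau-q-omega-0} gives $\tau_0 \succ_s \tilde{\tau}$ and hence the inclusion $K(\isubg{s}{\tau_0}, \tilde{\omega}) \subseteq K(\subg{s}{\tilde{\tau}}, \tilde{\omega})$ noted there, we get $S \subseteq K(\subg{s}{\tilde{\tau}}, \tilde{\omega})$; in particular $\omega_1$, $\omega_e$, and the edge $\tau_0$ all lie in $K(\subg{s}{\tilde{\tau}}, \tilde{\omega})$. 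It then remains to show that $\omega_0$ does \emph{not} lie in this component and that the bridge between the two components is exactly $\tilde{\tau}$.

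For the separation I would prove $\Kv(\subg{s}{\tilde{\tau}}, \omega_0) \subseteq \Kv(\subg{r}{\tilde{\tau}}, \omega_0)$, which by \eqref{eq:tau-start} excludes $\tilde{\omega}$ (and indeed all of $S$). The key geometric observation is that the component $K(\subg{r}{\tilde{\tau}}, \omega_0)$ is vertex-disjoint from $S$: it meets neither $K(\subg{r}{\tilde{\tau}}, \tilde{\omega})$ (a different component at threshold $\tilde{\tau}$), nor $T$ or its endpoint $\omega_e$ (which avoid it by the Case~2 choice of $T$), nor $\tau_0$ and $K(\subg{r}{\tau_0}, \omega_1)$ (which live on the $\omega_1$-side). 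To establish the inclusion I would take a hypothetical path in $\subg{s}{\tilde{\tau}}$ issuing from $\omega_0$ and its first edge $\tau^*$ leaving $K(\subg{r}{\tilde{\tau}}, \omega_0)$. If $\tau^* \notin S_{\mathrm E}$, then since $\preceq_s$ and $\preceq_r$ agree on $X^{(n-1)} \backslash S_{\mathrm E}$ (Fact~\ref{fact:order-coincidence-s}) and both $\tau^*$ and $\tilde{\tau}$ carry the $-\eta$ shift, the relation $\tau^* \succ_s \tilde{\tau}$ forces $\tau^* \succ_r \tilde{\tau}$, contradicting that $K(\subg{r}{\tilde{\tau}}, \omega_0)$ is a full component of $\subg{r}{\tilde{\tau}}$; if $\tau^* \in S_{\mathrm E}$, then one endpoint of $\tau^*$ lies simultaneously in $K(\subg{r}{\tilde{\tau}}, \omega_0)$ and in $S$, contradicting the disjointness. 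Hence no such exit exists.

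Once the separation holds, $\tilde{\tau}$ is manifestly the bridge: its two cofaces lie in $K(\subg{r}{\tilde{\tau}}, \tilde{\omega})$ and $K(\subg{r}{\tilde{\tau}}, \omega_0)$ by \eqref{eq:tau-start}, and these sit inside the two distinct $s$-components above. To read off $\tau_{s, \omega_0} = \tilde{\tau}$ I would apply the merge-tree description (Theorem~\ref{thm:vochd-alg}) together with Proposition~\ref{prop:connect-two-comp}: because $\preceq_s$ and $\preceq_r$ agree on $V_X$ and $\omega_0$ is the $\preceq_r$-maximum of $K(\subg{r}{\tau_0}, \omega_0) \supseteq \Kv(\subg{s}{\tilde{\tau}}, \omega_0)$, the vertex $\omega_0$ is the $(\preceq_s)$-root of its component, while the $\tilde{\omega}$-component contains $\omega_1$ with $\omega_1 \succ_r \omega_0$, hence $\omega_1 \succ_s \omega_0$, so its root is $\succ_s \omega_0$. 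Thus $\tilde{\tau}$ merges the smaller-root component (that of $\omega_0$) into the larger one, and the algorithm records the edge $\omega_0 \xrightarrow{\tilde{\tau}} *$ in $\pt_*$, i.e.\ $(\tilde{\tau}, \omega_0) \in D_{n-1}(\X_s)$.

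The main obstacle is precisely the separation step, which is where Case~2 genuinely departs from Case~1: here $\tilde{\tau} \notin S_{\mathrm E}$, so it receives the $-\eta$ shift rather than the $+\eta$ shift that made the conversion $\succ_s \Rightarrow \succ_r$ automatic for $\tau_0 \in Q_{\mathrm E}$ in Fact~\ref{fact:case-1-omega-0-omega-1-differnt-cc}. Salvaging the ``similar'' argument therefore requires the extra vertex-disjointness of $S$ from $K(\subg{r}{\tilde{\tau}}, \omega_0)$ and the first-exit-edge case split above; the remaining level-versus-order translations are the same routine estimates used throughout Case~1.
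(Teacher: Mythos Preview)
Your proposal is correct and follows the route the paper itself only sketches (``in a similar way to Facts~\ref{fact:case-1-omega-0-omega-1-differnt-cc} and~\ref{fact:case-1-tau-q-omega-0} using Facts~\ref{fact:order-coincidence-s}, \ref{fact:S-subgraph}, and~\ref{fact:case-2-tau-q-omega-0}''). You are right that Case~2 is not a verbatim replay of Case~1: since $\tilde{\tau}\notin S_{\mathrm E}$ it receives the $-\eta$ shift, so the single level inequality used in Fact~\ref{fact:case-1-omega-0-omega-1-differnt-cc} is unavailable, and your disjointness-plus-first-exit-edge argument is exactly the patch needed.

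One small point to tighten: when you say the two $r$-components ``sit inside the two distinct $s$-components,'' the inclusion $\Kv(\subg{r}{\tilde{\tau}},\omega_0)\subseteq \Kv(\subg{s}{\tilde{\tau}},\omega_0)$ is the direction you actually \emph{use} (to place the $\omega_0$-side coface of $\tilde{\tau}$ in $K(\subg{s}{\tilde{\tau}},\omega_0)$ and thereby conclude the merge happens at $\tilde{\tau}$), but you only argued the reverse inclusion. The missing direction follows immediately from your own disjointness observation: every edge of $K(\subg{r}{\tilde{\tau}},\omega_0)$ has both endpoints outside $S_V$, hence lies in $X^{(n-1)}\setminus S_{\mathrm E}$, where $\preceq_s$ agrees with $\preceq_r$ by Fact~\ref{fact:order-coincidence-s}; thus any $r$-path from $\omega_0$ to the coface is also an $s$-path. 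With that, your merge-tree conclusion $\tau_{s,\omega_0}=\tilde{\tau}$ is complete.
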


From the above facts, we have the following.
\begin{fact}
  $\tilde{\omega} \not \in K(\subg{s}{\tau_{s, \omega_0}}, \omega_0)$.
\end{fact}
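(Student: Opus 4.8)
The plan is to obtain this fact immediately from the two facts that precede it, without introducing any new construction. First I would invoke the preceding fact, which identifies the birth simplex as $\tau_{s, \omega_0} = \tilde{\tau}$; substituting this identity into the statement reduces the goal to showing $\tilde{\omega} \not\in K(\subg{s}{\tilde{\tau}}, \omega_0)$.

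Next I would appeal to the other half of that same preceding fact, namely that $K(\subg{s}{\tilde{\tau}}, \tilde{\omega})$ and $K(\subg{s}{\tilde{\tau}}, \omega_0)$ are \emph{distinct} connected components of $\subg{s}{\tilde{\tau}}$. Since a vertex always lies in its own connected component, $\tilde{\omega} \in K(\subg{s}{\tilde{\tau}}, \tilde{\omega})$; and because two connected components of a graph are either identical or vertex-disjoint, the distinctness of these two components forces $\tilde{\omega} \not\in K(\subg{s}{\tilde{\tau}}, \omega_0)$. Combined with $\tau_{s, \omega_0} = \tilde{\tau}$, this is precisely the asserted $\tilde{\omega} \not\in K(\subg{s}{\tau_{s, \omega_0}}, \omega_0)$.

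I do not expect any genuine obstacle in this final step: all of the substantive work has already been carried out in the preceding facts, in particular the separation of the two components in $\subg{s}{\tilde{\tau}}$ (established in the manner of Fact~\ref{fact:case-1-omega-0-omega-1-differnt-cc}) and the identification $\tau_{s, \omega_0} = \tilde{\tau}$. The only point worth confirming is that $\tilde{\omega}$ is genuinely a vertex of $\subg{s}{\tilde{\tau}}$, so that the component $K(\subg{s}{\tilde{\tau}}, \tilde{\omega})$ is well-defined; this is already implicit in the inclusion $K(\isubg{s}{\tau_0}, \tilde{\omega}) \subseteq K(\subg{s}{\tilde{\tau}}, \tilde{\omega})$ recorded just above, and can alternatively be read off from $\tilde{\omega} \in \Kv(\isubg{r}{\tilde{\tau}}, \omega_0)$ in \eqref{eq:tau-start} together with Fact~\ref{fact:order-coincidence-s}. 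Thus the fact reduces to the elementary disjointness property of connected components.
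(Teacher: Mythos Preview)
Your proposal is correct and matches the paper's own treatment: the paper simply writes ``From the above facts, we have the following'' and states the result, relying exactly on $\tau_{s,\omega_0}=\tilde{\tau}$ together with the distinctness of $K(\subg{s}{\tilde{\tau}}, \tilde{\omega})$ and $K(\subg{s}{\tilde{\tau}}, \omega_0)$, as you do.
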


Finally, since $\OV(q, \omega_0) = \Kv(\subg{q}{\tau_{q, \omega_0}}, \omega_0)$, we conclude that $\tilde{\omega} \not \in \OV(q, \omega_0)$.

\subsection{Limitation of stable volumes}

In this subsection, we discuss some of the limitations of stable volumes.

The first relates to the $(r, \omega_0)$-order condition.
Theorem~\ref{thm:stable-tree} requires the $(r, \omega_0)$-order condition; however, this condition seems somewhat curious. To clarify, we offer a brief discussion of the role
of the condition.

First, we examine what happens when orders with levels $q$ and $r$ satisfy $\|\hat{q} - \hat{r}\| \leq \epsilon/2$ but not the $(r, \omega_0)$-order condition.
Figure~\ref{fig:r-omega-cond} shows an example. In this example, we assume the following conditions:
\begin{align}
  Y &= Y_0 \cup \{ \tau_1, \tau_2, \omega_1, \omega_2 \} \text{ is a simplicial complex}, \\
  \hat{r}(\sigma) &\approx \hat{q}(\sigma) \text{ for any } \sigma \in Y, \label{eq:r-oemga-approx-1} \\
  \hat{r}(\omega_1) &\approx \hat{r}(\omega_2),  \hat{q}(\omega_1) \approx \hat{q}(\omega_2), \label{eq:r-oemga-approx-2} \\
  \hat{r}(\sigma) &< \hat{r}(\tau_1)  \text{ for any } \sigma \in Y_0, \\
  \hat{q}(\sigma) &< \hat{q}(\tau_1)  \text{ for any } \sigma \in Y_0, \\
  \hat{r}(\tau_1) &< \hat{r}(\tau_2) < \hat{r}(\omega_1) < \hat{r}(\omega_2) \text{ for any } \sigma \in Y_0, \\
  \hat{q}(\tau_1) &< \hat{q}(\tau_2) < \hat{q}(\omega_2) < \hat{q}(\omega_1) \text{ for any } \sigma \in Y_0 .
\end{align}

\begin{figure}[htbp]
  \centering
  \includegraphics[width=\hsize]{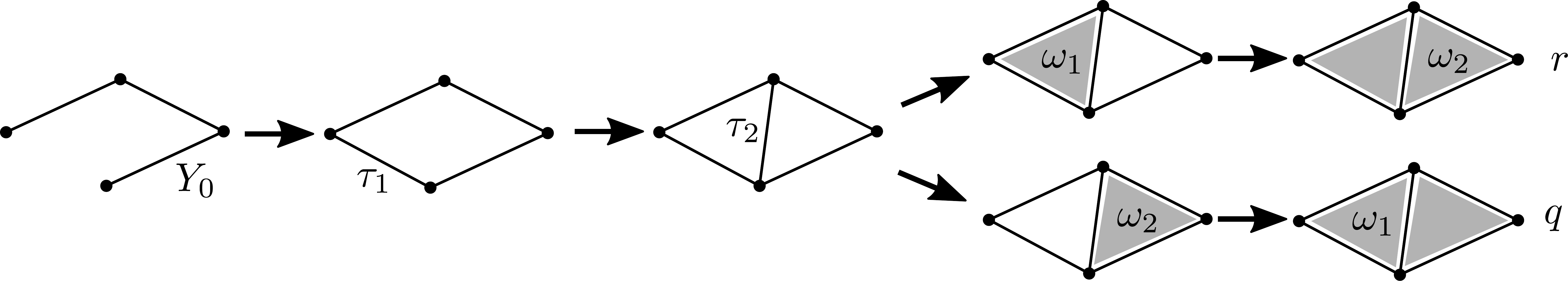}
  \caption{Two filtrations on the same simplicial complex}
  \label{fig:r-omega-cond}
\end{figure}

Here, we have
\begin{align*}
  (\hat{r}(\tau_1), \hat{r}(\omega_2)), (\hat{r}(\tau_2), \hat{r}(\omega_1)) &\in \pd_1(X, r), \\
  (\hat{q}(\tau_1), \hat{q}(\omega_1)), (\hat{q}(\tau_2), \hat{q}(\omega_2)) &\in \pd_1(X, q), \\
  (\hat{r}(\tau_1), \hat{r}(\omega_2)) & \approx (\hat{q}(\tau_1), \hat{q}(\omega_1)), \\
  (\hat{r}(\tau_2), \hat{r}(\omega_1)) & \approx (\hat{q}(\tau_2), \hat{q}(\omega_2)). 
\end{align*}
The optimal volumes are
\begin{align}
  \{\omega_1, \omega_2 \} &= \OV(\hat{r}, \hat{r}(\tau_1), \hat{r}(\omega_2)) = \OV(\hat{q}, \hat{q}(\tau_1), \hat{q}(\omega_1)),
                            \label{eq:r-omege-large-ov}\\
  \{\omega_1 \} & = \OV(\hat{r}, \hat{r}(\tau_2), \hat{r}(\omega_1)), \\
  \{\omega_2 \} &=  \OV(\hat{q}, \hat{q}(\tau_2), \hat{q}(\omega_2)). 
\end{align}
Therefore, $(\hat{r}(\tau_2), \hat{r}(\omega_1)) \approx (\hat{q}(\tau_2), \hat{q}(\omega_2))$; however,
$\OV(\hat{r}, \hat{r}(\tau_2), \hat{r}(\omega_1))$ does not intersect with $\OV(\hat{q}, \hat{q}(\tau_2), \hat{q}(\omega_2))$.
Thus, the example shows that $\OV(\hat{r}, \hat{r}(\tau_2), \hat{r}(\omega_1))$ does not have a robust part to small noise
if we do not assume the $(r, \omega_0)$-order condition.

In contrast, \eqref{eq:r-omege-large-ov} indicates that $\OV(\hat{r}, \hat{r}(\tau_1), \hat{r}(\omega_2))$ has a stable part 
even if we do not assume the $(r, \omega_0)$-order condition. We expect that if an optimal volume is large in some sense,
then the optimal volume has a stable part even if the $(r, \omega_0)$-order condition is not satisfied.
Mathematically clarifying what is meant by ``large in some sense'' is a subject for future research.

Next, we focus on the construction of simplicial filtrations. An alpha complex~\cite{alpha-shape} is often used to construct a filtration
from a pointcloud. Alpha complexes have some good properties. If a pointcloud is in $\R^d$ and satisfies the general position condition,
then the alpha shape is embedded in $\R^d$. An alpha shape has the same information as the union-balls model. More precisely, an alpha shape
is homotopy equivalent to the union of balls. However, the alpha shape may vary discontinuously with noise. Theorem~\ref{thm:stable-tree} assumes that
the simplicial complex $X$ does not change with noise; however, the assumption does not hold if the noise is large.
On the other hand, if the noise is small, the assumption holds and the theorem works perfectly. In Section~\ref{sec:example}, we
show that a large noise bandwidth parameter gives unexpected results. Therefore, it is reasonable to assume that
the noise is small; however, care should be taken.

\section{Another formalization of a stable volume as the solution to an optimization problem}\label{sec:stable-volume-optimization}

The formalization of stable volumes is based on persistence trees. Importantly, however, persistence trees are available only for the $(n-1)$th persistence homology,
and we cannot apply the concept directly to another degree.
Another version of a stable volume can be defined as the solution to an optimization problem. 

\begin{definition}\label{def:sv-opt}
  Let $r$ be an order with levels and  $(\tau_0, \omega_0) \in \bdsp_k(\X_r)$.
  The \emph{stable volume by optimization} of $(\tau_0, \omega_0)$ is defined by the solution to the following minimization problem.
  \begin{align}
    \text{minimize }& \|z\|_0, \text{ s. t. } \nonumber \\
    z & = \omega_0 + \sum_{\omega \in \mathcal{F}_{\epsilon, k+1} } \alpha_\omega \omega \in C_{k+1}(X; \Bbbk), \label{eq:z-equal} \\
    \tau^*(\partial z) &= 0 \text{ for any } \tau \in \mathcal{F}_{\epsilon, k}, \label{eq:tau-d-z-zero}
  \end{align}
  where $\Bbbk$ is the coefficient field and
  $\mathcal{F}_{\epsilon, k} = \{\sigma \in X^{(k)} \mid \hat{r}(\tau_0) + \epsilon \leq \hat{r}(\sigma) \text{ and } \sigma \prec_r \omega_0 \}$.
  $\SVo_\epsilon(r, \tau_0, \omega_0)$ denotes the stable volume by optimization.
\end{definition}

\subsection{Another formalization of a stable volume: A special case}\label{subsec:stable-volume-optimization-n-1}

For $(n-1)$th PH, we will prove the following second main theorem, which guarantees the validity of the definition.

\begin{theorem}\label{thm:sv-opt-tree}
  If $X$ satisfies Condition~\ref{cond:rn}, $\Bbbk=\Z/2\Z$, and $(\tau_0, \omega_0) \in \bdsp_{n-1}(\X_r)$,
  $\SVo_\epsilon(r, \tau_0, \omega_0)$ coincides with $\SV_\epsilon(r, \tau_0, \omega_0)$.
\end{theorem}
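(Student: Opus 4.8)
The plan is to reduce everything to the graph description already obtained in the proof of Claim~\ref{claim:easy}, namely the identity \eqref{eq:sv-by-kv}, $\SV_\epsilon(r,\tau_0,\omega_0) = \Kv(G_X(\hat{r} \geq \hat{r}(\tau_0)+\epsilon),\omega_0)$. Write $s_0 = \hat{r}(\tau_0)+\epsilon$. Since $\Bbbk = \Z/2\Z$, a chain $z \in C_n(X;\Bbbk)$ is just a subset $S \subseteq X^{(n)}$, and for an $(n-1)$-simplex $\tau$ with its two cofaces $\omega_a,\omega_b$ (possibly $\omega_\infty$, which is never in $S$) one has $\tau^*(\partial z)=0$ iff $\omega_a$ and $\omega_b$ agree on membership in $S$. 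So I would first restate the feasibility conditions of Definition~\ref{def:sv-opt} set-theoretically: a feasible $z$ is a set $S \ni \omega_0$ with $S\setminus\{\omega_0\}\subseteq \mathcal{F}_{\epsilon,n}$ such that every constraint edge $\tau\in\mathcal{F}_{\epsilon,n-1}$ has both cofaces inside $S$ or both outside. The goal then becomes showing that $\Kv(G_X(\hat{r}\geq s_0),\omega_0)$ is the unique minimal such $S$.

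I would then prove two things. First, that $\SV_\epsilon$ is feasible. Because $s_0 > \hat{r}(\tau_0)$, every vertex and edge of $G_X(\hat{r}\geq s_0)$ lies in $\subg{r}{\tau_0}$ (a simplex of level $\geq s_0$ is $\succ_r \tau_0$), so $\Kv(G_X(\hat{r}\geq s_0),\omega_0)\subseteq \Kv(\subg{r}{\tau_0},\omega_0)=\OV(r,\omega_0)$. By Proposition~\ref{prop:connect-two-comp}, $\omega_0$ is the $\preceq_r$-maximum cell of this component and $\omega_\infty$ sits in the other component, so every element of $\SV_\epsilon$ other than $\omega_0$ satisfies $\omega\prec_r\omega_0$ and $\hat{r}(\omega)\geq s_0$, i.e. lies in $\mathcal{F}_{\epsilon,n}$, while $\omega_\infty\notin\SV_\epsilon$. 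For the cut condition, any $\tau\in\mathcal{F}_{\epsilon,n-1}$ has $\hat{r}(\tau)\geq s_0$, hence is an edge of $G_X(\hat{r}\geq s_0)$ whose two cofaces are vertices of that graph; since no edge of a graph can have exactly one endpoint in a given connected component, the two cofaces agree on membership in $\SV_\epsilon$, so $\tau^*(\partial z)=0$.

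Second, that $\SV_\epsilon\subseteq S$ for every feasible $S$, which together with feasibility forces $\SV_\epsilon$ to be the unique minimizer and yields $\SVo_\epsilon(r,\tau_0,\omega_0)=\SV_\epsilon(r,\tau_0,\omega_0)$. Fix $\omega\in\SV_\epsilon$ and choose a path $\omega_0=\nu_0,\tau^{(1)},\nu_1,\dots,\tau^{(m)},\nu_m=\omega$ inside $G_X(\hat{r}\geq s_0)$. Each $\tau^{(i)}$ satisfies $\hat{r}(\tau^{(i)})\geq s_0$, and since $\tau^{(i)}$ is a proper face of $\nu_{i-1}$ with $\nu_{i-1}\preceq_r\omega_0$, condition \eqref{eq:simplices-ordering} gives $\tau^{(i)}\prec_r\nu_{i-1}\preceq_r\omega_0$; thus $\tau^{(i)}\in\mathcal{F}_{\epsilon,n-1}$ is a genuine constraint edge whose two cofaces are exactly $\nu_{i-1}$ and $\nu_i$. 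The cut condition then propagates membership along the path: $\nu_0=\omega_0\in S$ forces $\nu_1\in S$, and inductively $\omega=\nu_m\in S$. Hence $\SV_\epsilon\subseteq S$, completing the argument.

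The delicate point is not the homological bookkeeping but the interplay between the two defining conditions of $\mathcal{F}_\epsilon$ — the level threshold $\hat{r}(\sigma)\geq s_0$ and the order condition $\sigma\prec_r\omega_0$. The whole argument hinges on checking that every edge along a connecting path simultaneously clears the threshold and lies below $\omega_0$ in $\preceq_r$, so that it is actually among the imposed constraints; this is precisely where the inclusion into $\OV(r,\omega_0)$ (with $\omega_0$ its $\preceq_r$-maximum and $\omega_\infty$ excluded) is used. One must also keep track of the case where a coface is $\omega_\infty$, using that $\omega_\infty\notin S$ always, so that the reading of $\tau^*(\partial z)=0$ as ``both cofaces agree'' remains correct. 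An alternative I would mention but not pursue is to realize $\SVo_\epsilon$ as an ordinary optimal volume for a perturbed order pushing all simplices of level $< s_0$ below $\tau_0$; but since $\mathcal{F}_\epsilon$ is phrased through $\hat{r}$ rather than through an order, the direct dual-graph argument above is cleaner.
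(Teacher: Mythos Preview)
Your argument is correct and follows essentially the same route as the paper: both translate feasibility into the dual-graph picture, verify that $\Kv(G_X(\hat r\ge\hat r(\tau_0)+\epsilon),\omega_0)$ is feasible (the paper's Lemma~\ref{lemma:cc-opt}), and then establish minimality. The only cosmetic difference is that the paper packages minimality as the structural Lemma~\ref{lemma:G-z} (every feasible support is a disjoint union of connected components of $G_X(\hat r\ge s_0)$), whereas you argue directly by propagating membership along a path; these are two phrasings of the same observation.
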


Recall the remark associated with Theorem~\ref{thm:vochd-tree} that  $\SVo_\epsilon(r, \tau_0, \omega_0)$ can be regarded as a subset of $X^{(n)}$.

For $z = \omega_0 + \sum_{\omega \in \mathcal{F}_{\epsilon,n}} \alpha_\omega \omega \in C_n(X; \Z/2\Z)$, we define $V_z$ and $E_z$ as follows.
\begin{equation}
  \begin{aligned}
    V_z &= \{ \omega \in \mathcal{F}_{\epsilon, n} \mid \alpha_\omega = 1\} \cup \{\omega_0\}, \\
    E_z &= \{ \tau \in \mathcal{F}_{\epsilon, n-1} \mid \exists \omega \in V_z\ \tau^* (\partial \omega) = 1\}.
  \end{aligned}
\end{equation}

To prove the theorem, we show the following lemma.
\begin{lemma}\label{lemma:G-z}
  If $z$ satisfies \eqref{eq:z-equal} and \eqref{eq:tau-d-z-zero},
  $G_z := V_z \cup E_z$ is a subgraph of $G_X$ and $G_z$ is a disjoint union of
  some connected components of $G_X(\hat{r} \geq \hat{r}(\tau_0) + \epsilon)$.
\end{lemma}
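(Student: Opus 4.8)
The plan is to argue entirely inside the dual graph $G_X$, using two structural facts: each $\tau \in E_X = X^{(n-1)}$ has exactly two cofaces in $X \cup \{\omega_\infty\}$, and over $\Z/2\Z$ the coefficient $\tau^*(\partial z)$ equals, modulo $2$, the number of cofaces of $\tau$ in $X$ that belong to $V_z$. Since \eqref{eq:z-equal} forces the coefficient of $\omega_0$ to be $1$, I would first rewrite $z = \sum_{\omega \in V_z}\omega$, so that $V_z$ is exactly the support of $z$ together with $\omega_0$, and record this incidence dictionary for every $(n-1)$-simplex.

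Next I would show $G_z$ is a subgraph of $G_X$, i.e. both cofaces of each $\tau \in E_z$ lie in $V_z$. By definition an edge $\tau \in E_z$ satisfies $\tau \in \mathcal{F}_{\epsilon, n-1}$ and has at least one coface in $V_z$, so hypothesis \eqref{eq:tau-d-z-zero} gives $\tau^*(\partial z) = 0$; by the dictionary this is possible only if both cofaces of $\tau$ are finite $n$-simplices lying in $V_z$ (the outer cell $\omega_\infty$ is never in $V_z$, so an edge with $\omega_\infty$ as second coface would have coefficient $1$). In particular $\tau$ is not a boundary simplex and its endpoints are genuine vertices of $G_X$. Writing $G_\epsilon = G_X(\hat{r} \geq \hat{r}(\tau_0)+\epsilon)$, I would then note $V_z \subseteq V_X(\hat{r} \geq \hat{r}(\tau_0)+\epsilon)$ — each $\omega \in \mathcal{F}_{\epsilon,n}$ has $\hat{r}(\omega) \geq \hat{r}(\tau_0)+\epsilon$, and $\hat{r}(\omega_0) \geq \hat{r}(\tau_0)+\epsilon$ since $\hat{r}(\tau_0) < \hat{r}(\omega_0)$ for a genuine birth-death pair — and $E_z \subseteq E_X(\hat{r} \geq \hat{r}(\tau_0)+\epsilon)$, hence $G_z \subseteq G_\epsilon$.

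To promote this to ``disjoint union of whole connected components of $G_\epsilon$'', I would establish the saturation property: for every $\omega \in V_z$ and every edge $\tau$ of $G_\epsilon$ incident to $\omega$, both $\tau$ and its other coface lie in $G_z$. As $\tau$ is a face of $\omega$, and either $\omega = \omega_0$ or $\omega \prec_r \omega_0$ (because $\omega \in \mathcal{F}_{\epsilon,n}$), condition \eqref{eq:simplices-ordering} gives $\tau \prec_r \omega_0$, while $\hat{r}(\tau) \geq \hat{r}(\tau_0)+\epsilon$ holds because $\tau$ is a $G_\epsilon$-edge; thus $\tau \in \mathcal{F}_{\epsilon,n-1}$ and, having the coface $\omega \in V_z$, $\tau \in E_z$. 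The subgraph argument of the previous paragraph then forces the second coface into $V_z$ as well. A routine connectivity argument now shows that $G_z$ contains every connected component of $G_\epsilon$ it meets, and no partial components, which is exactly the claim.

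I expect the only delicate point to be the bookkeeping around the outer cell $\omega_\infty$ and the boundary $(n-1)$-simplices: the argument that \eqref{eq:tau-d-z-zero} forbids an edge of $E_z$ from having $\omega_\infty$ as a second endpoint is precisely what keeps $G_z$ an honest subgraph and what makes the saturation step close up, since a single ``escaping'' edge to a vertex outside $V_z$ would contradict the coefficient-zero condition. Everything else reduces to the parity count over $\Z/2\Z$ and the order axiom \eqref{eq:simplices-ordering}, and is routine.
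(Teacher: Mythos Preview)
Your argument is essentially the paper's: it too first rules out $\omega_\infty$ as a coface of any $\tau\in E_z$, then uses the parity identity $\tau^*(\partial z)=\alpha_{\omega_1}+\alpha_{\omega_2}$ together with \eqref{eq:tau-d-z-zero} to force both cofaces of each $\tau\in E_z$ into $V_z$, and finally proves your ``saturation'' step (stated there as $A_{\omega}\cap E_X(\hat r\ge\hat r(\tau_0)+\epsilon)=A_{\omega}\cap E_z$ for $\omega\in V_z$) and walks along paths in $G_X(\hat r\ge\hat r(\tau_0)+\epsilon)$. One small wrinkle: your justification that $\hat r(\omega_0)\ge\hat r(\tau_0)+\epsilon$ ``since $\hat r(\tau_0)<\hat r(\omega_0)$ for a genuine birth-death pair'' is a non sequitur---this really needs $\epsilon$ not to exceed the lifetime, an assumption the paper also leaves tacit.
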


We now assume that $z$ satisfies \eqref{eq:z-equal} and \eqref{eq:tau-d-z-zero}. To show the above lemma, we first show the following fact. 
\begin{fact}
  For any $\tau \in E_z$, $\tau$ is not a face of $\omega_\infty$.
\end{fact}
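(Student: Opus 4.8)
The plan is to argue by contradiction, using the fact that in the compactified complex every $(n-1)$-simplex has exactly two cofaces in $X \cup \{\omega_\infty\}$. First I would record the elementary dictionary between coefficients of $z$ and membership in $V_z$: for $\omega \in X^{(n)}$, the coefficient of $\omega$ in $z = \omega_0 + \sum_{\omega \in \mathcal{F}_{\epsilon,n}} \alpha_\omega \omega$ equals $1$ if and only if $\omega \in V_z$. This follows at once from $\omega_0 \notin \mathcal{F}_{\epsilon,n}$ (as $\omega_0 \not\prec_r \omega_0$) together with the definition of $V_z$.

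Next I would interpret $\tau^*(\partial z)$ geometrically. Over $\Z/2\Z$, the coefficient of $\tau$ in $\partial z$ is the sum, modulo $2$, of the $z$-coefficients of the $n$-simplices having $\tau$ as a face. By the earlier Fact, $\tau$ has exactly two cofaces in $X \cup \{\omega_\infty\}$; hence ``$\tau$ is a face of $\omega_\infty$'' is precisely the assertion that exactly one of these cofaces, say $\omega_a$, lies in $X^{(n)}$, the other being $\omega_\infty$ (which contributes nothing to $z \in C_n(X)$). In that situation $\tau^*(\partial z)$ reduces to the single term given by the $z$-coefficient of $\omega_a$.

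Now I would close the argument. Assume $\tau \in E_z$ is a face of $\omega_\infty$, with unique $X$-coface $\omega_a$. On one hand, $\tau \in E_z$ supplies some $\omega \in V_z$ with $\tau^*(\partial \omega) = 1$; the only $n$-simplex admitting $\tau$ as a face is $\omega_a$, so $\omega_a \in V_z$ and hence the $z$-coefficient of $\omega_a$ is $1$. On the other hand, $E_z \subseteq \mathcal{F}_{\epsilon,n-1}$, so constraint \eqref{eq:tau-d-z-zero} forces $\tau^*(\partial z) = 0$, which by the previous paragraph equals the $z$-coefficient of $\omega_a$, hence $0$. These are contradictory, so no such $\tau$ exists.

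I do not expect a genuine obstacle here; the proof is short once the setup is unwound. The only place demanding care is the identification of ``face of $\omega_\infty$'' with ``having a single coface in $X^{(n)}$'', which rests on the quoted Fact that each $(n-1)$-simplex has exactly two cofaces in $X \cup \{\omega_\infty\}$ (a consequence of Condition~\ref{cond:rn} and the $S^n$ cell structure). Equally, one must keep the $z$-coefficient of $\omega_a$ distinct from the incidence number $\tau^*(\partial \omega_a)$, as conflating them is the natural source of error.
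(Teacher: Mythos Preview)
Your argument is correct and follows essentially the same route as the paper: assume $\tau$ is a face of $\omega_\infty$, observe that then $\tau$ has a unique coface $\omega_a$ in $X^{(n)}$, use the definition of $E_z$ to conclude $\omega_a \in V_z$, and then compute $\tau^*(\partial z) = 1$, contradicting \eqref{eq:tau-d-z-zero}. Your version is a bit more explicit about the dictionary between $z$-coefficients and membership in $V_z$, but the logical structure is identical.
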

\begin{proof}
  Assume $\tau$ is a face of $\omega_\infty$. Then there is a unique $n$-simplex $\omega_1 \in X^{(n)}$ which is the coface of $\tau$.
  From the definition of $E_z$ and $\omega_\infty \not \in V_z$, $V_z$ should contain $\omega_1$. However,
  \begin{equation}
    \tau^*(\partial z) = \tau^*(\partial (\sum_{\omega \in V_z} \omega)) = \sum_{\omega \in V_z} \tau^*(\partial \omega)
    = \tau^*(\partial \omega_1) = 1 \not = 0,
  \end{equation}
  which contradicts \eqref{eq:tau-d-z-zero}.
\end{proof}

The following fact guarantees that $G_z$ is a subgraph of $G_X$.
\begin{fact}\label{fact:edge-to-vertex}
  For any $\tau \in E_z$,
  both cofaces of $\tau$, $\omega_1$ and $\omega_2$, are
  contained in $V_z$.
\end{fact}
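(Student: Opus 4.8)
The plan is to read off both cofaces of $\tau$ directly from the boundary constraint \eqref{eq:tau-d-z-zero} by means of a parity count over $\Z/2\Z$. Since the coefficient field is $\Z/2\Z$ and $V_z$ collects exactly the $n$-simplices occurring in $z$ (with $\omega_0$ adjoined separately, because $\omega_0 \notin \mathcal{F}_{\epsilon,n}$), I would first record the identity $z = \sum_{\omega \in V_z} \omega$ in $C_n(X; \Z/2\Z)$ and rewrite the cochain evaluation as
\begin{equation}
  \tau^*(\partial z) = \sum_{\omega \in V_z} \tau^*(\partial \omega),
\end{equation}
noting that $\tau^*(\partial \omega) = 1$ holds precisely when $\tau$ is a face of $\omega$, i.e. when $\omega$ is a coface of $\tau$.

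Next I would invoke the two preceding facts to pin down which $\omega$ contribute. The earlier structural fact states that $\tau$ has exactly two proper cofaces $\omega_1, \omega_2$ among the $n$-cells of $X \cup \{\omega_\infty\}$, and the fact just established guarantees that for $\tau \in E_z$ neither of these equals $\omega_\infty$, so $\omega_1, \omega_2 \in X^{(n)}$. Hence the sum collapses to a count of how many of $\omega_1, \omega_2$ lie in $V_z$:
\begin{equation}
  \tau^*(\partial z) = \bigl| \{\omega_1, \omega_2\} \cap V_z \bigr| \pmod 2.
\end{equation}

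Then I would apply the constraint and the definition of $E_z$. Because $\tau \in E_z \subseteq \mathcal{F}_{\epsilon, n-1}$, equation \eqref{eq:tau-d-z-zero} forces $\tau^*(\partial z) = 0$, so $|\{\omega_1, \omega_2\} \cap V_z|$ is even, hence $0$ or $2$. On the other hand, the defining property of $E_z$ supplies some $\omega \in V_z$ with $\tau^*(\partial \omega) = 1$, so at least one of $\omega_1, \omega_2$ lies in $V_z$ and the count is at least $1$. Being even and positive, it must equal $2$, which yields $\omega_1, \omega_2 \in V_z$, the desired conclusion.

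I do not expect a serious obstacle: the argument is a parity computation riding on the boundary constraint. The only points needing care are (i) writing $z$ as a sum over $V_z$ so that $\omega_0$ is correctly included, and (ii) verifying $\tau \in \mathcal{F}_{\epsilon, n-1}$ so that \eqref{eq:tau-d-z-zero} actually applies, both of which are immediate from the definitions of $V_z$ and $E_z$. The one step I would state explicitly rather than gloss over is the use of the preceding fact that $\tau$ is not a face of $\omega_\infty$: without it the parity count could silently involve only one genuine coface, and the "even, hence $0$ or $2$" dichotomy would break down.
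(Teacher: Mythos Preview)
Your argument is correct and follows essentially the same route as the paper: both compute $\tau^*(\partial z)$ over $\Z/2\Z$, reduce it to the sum of the coefficients of the two cofaces, and use the constraint \eqref{eq:tau-d-z-zero} together with the definition of $E_z$ to force both coefficients to be $1$. Your write-up is in fact slightly more careful than the paper's, since you explicitly include $\omega_0$ in $V_z$ and explicitly invoke the preceding fact that $\tau$ is not a face of $\omega_\infty$.
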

\begin{proof}
  From the definition $E_z$, $V_z$ contains either $\omega_1$ or $\omega_2$ and one of 
  the following holds: $\alpha_{\omega_1}= 1$ or $\alpha_{\omega_1} = 1$.
  From the condition \eqref{eq:tau-d-z-zero},
  \begin{equation}
    \tau^*(\partial z) = \sum_{\omega \in \mathcal{F}_{\epsilon, n}} \alpha_\omega \tau^*(\partial( \omega))
    = \alpha_{\omega_1} + \alpha_{\omega_2} = 0.
  \end{equation}
  Therefore, $\alpha_{\omega_1} = \alpha_{\omega_2} = 1$ , which means that both $\omega_1$ and $\omega_2$ are contained in $V_z$.
\end{proof}

\begin{fact}\label{fact:vertex-to-edge}
  For $\omega_1 \in V_X$, $A_{\omega_1}$ denotes the set of all $(n-1)$-dimensional faces of $\omega_1$.
  If $\omega_1 \in V_z$, then the following holds:
  \begin{equation}
    A_{\omega_1} \cap E_X(\hat{r} \geq \hat{r}(\tau_0) + \epsilon) = A_{\omega_1} \cap E_z.
  \end{equation}
\end{fact}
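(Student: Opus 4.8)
The plan is to establish the two inclusions separately and to observe that one of them is immediate while the other is the entire content. The inclusion $A_{\omega_1} \cap E_z \subseteq A_{\omega_1} \cap E_X(\hat{r} \geq \hat{r}(\tau_0) + \epsilon)$ is trivial: any $\tau \in E_z$ belongs to $\mathcal{F}_{\epsilon, n-1}$ by the definition of $E_z$, and membership in $\mathcal{F}_{\epsilon, n-1}$ already forces $\hat{r}(\tau) \geq \hat{r}(\tau_0) + \epsilon$, which is exactly the defining condition of $E_X(\hat{r} \geq \hat{r}(\tau_0) + \epsilon)$.

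For the reverse inclusion I would take $\tau \in A_{\omega_1}$ with $\hat{r}(\tau) \geq \hat{r}(\tau_0) + \epsilon$ and show $\tau \in E_z$. Since $\omega_1 \in V_z$ and $V_z \subseteq X^{(n)}$, the simplex $\omega_1$ is a genuine $n$-simplex (in particular $\omega_1 \neq \omega_\infty$), so $\tau$ is one of its $(n-1)$-faces and hence $\tau^*(\partial \omega_1) = 1$ over $\Bbbk = \Z/2\Z$. Thus the witness demanded by the definition of $E_z$ is supplied by $\omega_1$ itself, and it only remains to verify $\tau \in \mathcal{F}_{\epsilon, n-1}$; the level condition $\hat{r}(\tau) \geq \hat{r}(\tau_0) + \epsilon$ is given, so the sole thing left to check is the ordering condition $\tau \prec_r \omega_0$.

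The ordering condition is the one point requiring a short argument, and it is where I would be careful. Because $\omega_1 \in V_z = \{\omega \in \mathcal{F}_{\epsilon, n} \mid \alpha_\omega = 1\} \cup \{\omega_0\}$, either $\omega_1 = \omega_0$ or $\omega_1 \in \mathcal{F}_{\epsilon, n}$, and in the latter case $\omega_1 \prec_r \omega_0$ by definition of $\mathcal{F}_{\epsilon, n}$; in both subcases $\omega_1 \preceq_r \omega_0$. Since $\tau$ is a proper face of $\omega_1$, condition \eqref{eq:simplices-ordering} yields $\tau \prec_r \omega_1 \preceq_r \omega_0$, hence $\tau \prec_r \omega_0$. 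Combined with $\hat{r}(\tau) \geq \hat{r}(\tau_0) + \epsilon$, this places $\tau$ in $\mathcal{F}_{\epsilon, n-1}$ and completes $\tau \in E_z$. There is no genuine obstacle: the statement is essentially bookkeeping that unwinds the definitions of $V_z$, $E_z$, and $\mathcal{F}_{\epsilon, \bullet}$, and the only subtlety is to pass through $\omega_1 \preceq_r \omega_0$ (splitting into the two subcases $\omega_1 = \omega_0$ and $\omega_1 \in \mathcal{F}_{\epsilon, n}$) so that $\tau$ is eligible to lie in $\mathcal{F}_{\epsilon, n-1}$.
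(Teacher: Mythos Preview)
Your proof is correct and follows essentially the same approach as the paper: both establish the trivial inclusion via $E_z \subseteq \mathcal{F}_{\epsilon,n-1} \subseteq E_X(\hat{r} \geq \hat{r}(\tau_0) + \epsilon)$, and for the reverse inclusion both derive $\tau \prec_r \omega_0$ from $\tau \prec_r \omega_1$ together with $\omega_1 \in V_z \subseteq \mathcal{F}_{\epsilon,n} \cup \{\omega_0\}$, then use $\tau^*(\partial \omega_1) = 1$ to conclude $\tau \in E_z$. Your version merely spells out the case split $\omega_1 = \omega_0$ versus $\omega_1 \in \mathcal{F}_{\epsilon,n}$ that the paper leaves implicit.
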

\begin{proof}
  Since $E_z \subseteq \mathcal{F}_{\epsilon, n-1} \subseteq E_X(\hat{r} \geq \hat{r}(\tau_0) + \epsilon)$,
  $A_{\omega_1} \cap E_X(\hat{r} \geq \hat{r}(\tau_0) + \epsilon) \supseteq A_{\omega_1} \cap E_z$ is trivial. Therefore,
  we assume $\tau \in A_{\omega_1} \cap E_X(\hat{r} \geq \hat{r}(\tau_0) + \epsilon)$ and will show
  $\tau \in A_{\omega_1} \cap E_z$. Since $\tau \prec_r \omega_1$ and $\omega_1 \in V_z \subset \mathcal{F}_{\epsilon, n} \cup \{\omega_0\}$,
  we have $\tau \prec_r \omega_0$. Therefore, $\tau \in \mathcal{F}_{\epsilon, n-1}$ since $\tau \in E_X(\hat{r} \geq \hat{r}(\tau_0) + \epsilon)$.
  We also have $\tau^*(\partial \omega_1) = 1$ because $\tau \in A_{\omega_1}$ and we conclude $\tau \in E_z$.
\end{proof}

By repeatedly using Fact~\ref{fact:vertex-to-edge}, we can see that any path in $G_X(\hat{r} \geq \hat{r}(\tau_0) + \epsilon)$ starting from
$\omega_1 \in V_z$ is also a path in $G_z$. This means that $G_z$ is a disjoint union of
some connected components of $G_X(\hat{r} \geq \hat{r}(\tau_0) + \epsilon)$.

\begin{lemma}\label{lemma:cc-opt}
  $z_1 = \sum_{\omega \in \Kv(G_X(\hat{r} \geq \hat{r}(\tau_0) + \epsilon), \omega_0) } \omega$ satisfies \eqref{eq:z-equal} and \eqref{eq:tau-d-z-zero}.
\end{lemma}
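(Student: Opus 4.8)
The plan is to recognize $z_1$ as the indicator chain, over $\Z/2\Z$, of the connected component $K := \Kv(G_X(\hat{r} \geq \hat{r}(\tau_0)+\epsilon), \omega_0)$ of the dual graph, and to verify the two feasibility conditions \eqref{eq:z-equal} and \eqref{eq:tau-d-z-zero} directly. By the already established identity \eqref{eq:sv-by-kv}, $\omega_0 \in K$, so it suffices to show (i) every other vertex of $K$ lies in $\mathcal{F}_{\epsilon,n}$, which gives \eqref{eq:z-equal}, and (ii) $\tau^*(\partial z_1) = 0$ for each $\tau \in \mathcal{F}_{\epsilon,n-1}$, which gives \eqref{eq:tau-d-z-zero}.

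For (i), I would first pin down the vertices that could spoil the description: $\omega_\infty$ and any vertex not $\preceq_r \omega_0$. Taking $q = r$, which trivially satisfies $\|\hat{r}-\hat{r}\|_\infty = 0 < \epsilon/2$ and the $(r,\omega_0)$-order condition, the subgraph argument from the proof of Claim~\ref{claim:easy} shows that $G_X(\hat{r}\geq \hat{r}(\tau_0)+\epsilon)$ is a subgraph of $\subg{r}{\tau_0}$, hence $K \subseteq \Kv(\subg{r}{\tau_0},\omega_0) = \OV(r,\omega_0)$. Because the optimal volume is a chain in $C_n(X)$, that is, a subset of the finite $n$-simplices $X^{(n)}$, we immediately get $\omega_\infty \notin K$ and that every vertex of $K$ is a genuine $n$-simplex of $X$. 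Proposition~\ref{prop:connect-two-comp} further gives that $\omega_0$ is the $(\preceq_r)$-maximum $n$-simplex of $\OV(r,\omega_0)$, so $\omega \preceq_r \omega_0$ for all $\omega \in K$. Since membership in the graph forces $\hat{r}(\omega) \geq \hat{r}(\tau_0)+\epsilon$, each $\omega \in K \setminus \{\omega_0\}$ satisfies both defining inequalities of $\mathcal{F}_{\epsilon,n}$, establishing \eqref{eq:z-equal}.

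For (ii), the crux is the observation that an edge of $G_X(\hat{r}\geq \hat{r}(\tau_0)+\epsilon)$ joins its two endpoints within the graph, so they lie in the same connected component. Given $\tau \in \mathcal{F}_{\epsilon,n-1}$ we have $\hat{r}(\tau) \geq \hat{r}(\tau_0)+\epsilon$, so $\tau$ is such an edge; by \eqref{eq:simplices-ordering} each of its (exactly two) cofaces $\omega$ satisfies $\hat{r}(\omega) \geq \hat{r}(\tau)$ and is therefore a vertex of the graph joined to the other coface by $\tau$. Hence the two cofaces are either both in $K$ or both outside $K$. Computing over $\Z/2\Z$, $\tau^*(\partial z_1) = \sum_{\omega \in K}\tau^*(\partial\omega)$ equals, modulo $2$, the number of cofaces of $\tau$ lying in $K$, which is $0$ in either case; when one coface happens to be $\omega_\infty$ the same conclusion holds, since $\omega_\infty \notin K$ forces the other coface out of $K$ as well. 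This yields \eqref{eq:tau-d-z-zero}.

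The step demanding the most care is the bookkeeping around $\omega_\infty$: I must ensure that $z_1$ is genuinely a chain supported on finite $n$-simplices, so that $\omega_\infty \notin K$ is available, and that the ``exactly two cofaces'' fact is applied correctly when one of them is the infinite cell. The boundary cancellation in (ii) is the heart of the argument, but it is short once the connectivity observation — an edge present in the thresholded dual graph forces its two cofaces into a common component — is in hand; everything else is routine given Proposition~\ref{prop:connect-two-comp} and the subgraph relation already used for Claim~\ref{claim:easy}.
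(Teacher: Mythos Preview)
Your proof is correct and follows essentially the same approach as the paper: both arguments establish \eqref{eq:z-equal} by using that $G_X(\hat{r}\geq \hat{r}(\tau_0)+\epsilon)$ is a subgraph of $\subg{r}{\tau_0}$ together with the $(\preceq_r)$-maximality of $\omega_0$ from Proposition~\ref{prop:connect-two-comp}, and both establish \eqref{eq:tau-d-z-zero} by the connectivity observation that any $\tau\in\mathcal{F}_{\epsilon,n-1}$ is an edge of the thresholded dual graph, so its two cofaces lie in the same component and contribute either $1+1=0$ or $0$ to $\tau^*(\partial z_1)$. Your treatment is a bit more explicit about the role of $\omega_\infty$, and you reach the subgraph relation via the $q=r$ instance of Claim~\ref{claim:easy} rather than directly, but these are presentational differences only.
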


\begin{proof}
  First, we show \eqref{eq:z-equal}. The condition \eqref{eq:z-equal} is equivalent to the following inclusion relationship:
  \begin{equation}\label{eq:cc-in-F}
    \Kv(G_X(\hat{r} \geq \hat{r}(\tau_0) + \epsilon), \omega_0) \subset \mathcal{F}_{\epsilon, n} \cup \{\omega_0\}.
  \end{equation}
  Since $G_X(\hat{r} \geq \hat{r}(\tau_0) + \epsilon)$ is a subgraph of $\subg{r}{\tau_0}$ and $\omega_0$ is
  $(\succeq_r)$-maximum element in $K(\subg{r}{\tau_0}, \omega_0)$, $\omega \preceq_r \omega_0$ holds for any
  $\omega \in \Kv(G_X(\hat{r} \geq \hat{r}(\tau_0) + \epsilon), \omega_0)$. This equates to \eqref{eq:cc-in-F}.

  Next we show \eqref{eq:tau-d-z-zero}. We can show the following relationship in a similar way to \eqref{eq:cc-in-F}.
  \begin{equation}
    K_E(G_X(\hat{r} \geq \hat{r}(\tau_0) + \epsilon), \omega_0) \subseteq \mathcal{F}_{\epsilon, n-1}.
  \end{equation}
  Let $\tau$ be an element of $\mathcal{F}_{\epsilon, n-1}$. We consider the following two
  cases to show $\tau^*(\partial z) = 0$.
  \begin{enumerate}
  \item $\tau \in K_E(G_X(\hat{r} \geq \hat{r}(\tau_0) + \epsilon), \omega_0)$.
    In this case, $K_E(G_X(\hat{r} \geq \hat{r}(\tau_0) + \epsilon), \omega_0)$ contains both cofaces $\omega_1, \omega_2$ of $\tau$, and
    \begin{equation}
      \tau^*(\partial z_1) = \tau^*(\partial \omega_1) + \tau^*(\partial \omega_2) = 1 + 1  = 0.
    \end{equation}
  \item $\tau \in \mathcal{F}_{\epsilon, n-1} \backslash K_E(G_X(\hat{r} \geq \hat{r}(\tau_0) + \epsilon), \omega_0)$.
    In this case, $K_E(G_X(\hat{r} \geq \hat{r}(\tau_0) + \epsilon), \omega_0)$ does not contain both cofaces of $\tau$, and
    $\tau^*(\partial z_1) = 0$.
  \end{enumerate}
  In both cases, we have $\tau^*(\partial z_1) = 0$.
\end{proof}

From Lemma~\ref{lemma:G-z} and Lemma~\ref{lemma:cc-opt}, we can show that $z_1$ in Lemma~\ref{lemma:cc-opt} is the solution to the optimization
problem in Definition~\ref{def:sv-opt}. This means that 
$\Kv(G_X(\hat{r} \geq \hat{r}(\tau_0) + \epsilon), \omega_0) = \SVo_{\epsilon}(\hat{r}, \tau_0, \omega_0)$.
From \eqref{eq:sv-by-kv}, this set is equal to $\SV(r, \tau_0, \omega_0)$ , which completes the proof of the theorem.

\subsection{Another formalization of a stable volume: general case}\label{subsec:stable-volume-optimization-general}

We can apply Definition~\ref{def:sv-opt} to cases other than the above. In such cases,
Theorem~\ref{thm:sv-opt-tree} does not hold in general, and it is difficult to mathematically ensure the good property
shown in Theorem \ref{thm:stable-tree}. Empirically, however, the stable volumes often work well.
(We examine the property using computer experiments in Section~\ref{sec:example}.)

The reason for this is likely that an optimal volume is often included in a lower-dimensional structure (a submanifold or a lower-dimensional
simplicial complex) and the solution of the stable volume is also included in the structure.

\subsection{Stable sub-volume}

In the previous subsection, we expressed our belief that a lower-dimensional structure may make the stable volume work well.
We can develop a variant of stable volume using this idea.
If we find such a lower-dimensional structure, we can consider the
stable volume constrained on the structure. An optimal volume is
one possible lower-dimensional structure. We define a \emph{stable sub-volume}
as follows.

\begin{definition}\label{def:ssv-opt}
  Let $r$ be an order with levels and  $(\tau_0, \omega_0) \in \bdsp_k(\X_r)$.
  The \emph{stable sub-volume} of $(\tau_0, \omega_0)$ is defined by the solution to the following minimization problem:
  \begin{align}
    \text{minimize }& \|z\|_0, \text{ s. t. } \nonumber \\
    z & = \omega_0 + \sum_{\omega \in \mathcal{F}_{\epsilon, k+1} \cap \OV(r, \tau_0, \omega_0)} \alpha_\omega \omega \in C_{k+1}(X; \Bbbk), \label{eq:ssv-z-equal} \\
    \tau^*(\partial z) &= 0 \text{ for any } \tau \in \mathcal{F}_{\epsilon, k}, \label{eq:ssv-tau-d-z-zero}
  \end{align}
  where $\Bbbk$ is the coefficient field and
  $\mathcal{F}_{\epsilon, k} = \{\sigma \in X^{(k)} \mid \hat{r}(\tau_0) + \epsilon \leq \hat{r}(\sigma) \text{ and } \sigma \prec_r \omega_0 \}$.
\end{definition}
For stable sub-volumes, the parameter $\epsilon$ is also called a noise
bandwidth parameter.

The following proposition holds for the $(n-1)$th PH in $\R^n$ since the stable volume is a subset of an optimal volume.
\begin{prop}
  Let $X$ be a simplicial complex embedded in $\R^n$ satisfying
  Condition~\ref{cond:rn}. Let $r$ be an order with levels.
  For $(\tau_0, \omega_0) \in \bdsp_{n-1}(\X_r)$ and $\epsilon > 0$,
  the stable volume of $(\tau_0, \omega_0)$ with bandwidth parameter $\epsilon$
  coincides with
  the stable sub-volume of $(\tau_0, \omega_0)$ with bandwidth parameter $\epsilon$.
\end{prop}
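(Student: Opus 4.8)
The plan is to observe that the stable sub-volume problem of Definition~\ref{def:ssv-opt} is exactly the stable volume problem of Definition~\ref{def:sv-opt} with one extra restriction: the support of the unknown chain $z$ (apart from $\omega_0$) is forced to lie inside $\OV(r, \tau_0, \omega_0)$ rather than merely inside $\mathcal{F}_{\epsilon, n}$. Writing $\mathcal{A}$ for the feasible set of the stable volume problem and $\mathcal{B}$ for that of the stable sub-volume problem, both share the boundary constraint \eqref{eq:tau-d-z-zero}, and since $\mathcal{F}_{\epsilon, n} \cap \OV(r, \tau_0, \omega_0) \subseteq \mathcal{F}_{\epsilon, n}$ any $\mathcal{B}$-chain is automatically an $\mathcal{A}$-chain; hence $\mathcal{B} \subseteq \mathcal{A}$. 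The whole statement will then follow once I show that the (unique) minimiser of the stable volume problem already lies in the smaller feasible set $\mathcal{B}$.

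To this end, recall from the proof of Theorem~\ref{thm:sv-opt-tree} that the unique solution of the stable volume problem is
\[
  z_1 = \sum_{\omega \in \Kv(G_X(\hat{r} \geq \hat{r}(\tau_0) + \epsilon),\, \omega_0)} \omega,
\]
whose support, viewed as a set of $n$-simplices, is $\SV_\epsilon(r, \tau_0, \omega_0) = \Kv(G_X(\hat{r} \geq \hat{r}(\tau_0) + \epsilon), \omega_0)$ by \eqref{eq:sv-by-kv}. On the other hand, Proposition~\ref{prop:connect-two-comp} identifies the optimal volume as $\OV(r, \tau_0, \omega_0) = \Kv(\subg{r}{\tau_0}, \omega_0)$, which in particular contains $\omega_0$. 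The key step is the inclusion
\[
  \SV_\epsilon(r, \tau_0, \omega_0) \subseteq \OV(r, \tau_0, \omega_0),
\]
which I would prove exactly as in the proof of Claim~\ref{claim:easy}: every simplex $\sigma$ with $\hat{r}(\sigma) \geq \hat{r}(\tau_0) + \epsilon$ satisfies $\sigma \succ_r \tau_0$ by the order-with-levels axiom, so $G_X(\hat{r} \geq \hat{r}(\tau_0) + \epsilon)$ is a subgraph of $\subg{r}{\tau_0}$, and hence the connected component of $\omega_0$ in the former is contained in its connected component in the latter.

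With the inclusion in hand, the conclusion is a soft optimization argument. The support of $z_1$ lies in $\mathcal{F}_{\epsilon, n} \cup \{\omega_0\}$ (the constraint \eqref{eq:z-equal}, verified in Lemma~\ref{lemma:cc-opt}) and, by the inclusion just shown, also in $\OV(r, \tau_0, \omega_0)$; therefore $z_1 \in \mathcal{B}$. Since $\mathcal{B} \subseteq \mathcal{A}$ and $z_1$ minimises $\|\cdot\|_0$ over $\mathcal{A}$, it minimises $\|\cdot\|_0$ over $\mathcal{B}$ as well; conversely any stable sub-volume solution is feasible for the stable volume problem and attains this same minimal value, so by the uniqueness of the stable volume solution it must coincide with $z_1$. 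Hence the stable sub-volume equals the stable volume.

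I expect no serious obstacle: the only substantive ingredient is the inclusion $\SV_\epsilon \subseteq \OV$, and that is immediate from the subgraph relation already established while proving Claim~\ref{claim:easy}. The remaining care is bookkeeping—checking $\mathcal{B} \subseteq \mathcal{A}$, confirming $\omega_0 \in \OV(r, \tau_0, \omega_0)$ so that $z_1$ is genuinely feasible for the sub-volume problem, and invoking the uniqueness from Theorem~\ref{thm:sv-opt-tree} to upgrade ``$z_1$ achieves the minimum over $\mathcal{B}$'' to ``$z_1$ \emph{is} the minimiser over $\mathcal{B}$.''
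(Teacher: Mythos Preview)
Your proposal is correct and matches the paper's approach exactly: the paper states only the one-line justification ``since the stable volume is a subset of an optimal volume,'' and your write-up spells out precisely this inclusion $\SV_\epsilon \subseteq \OV$ together with the obvious optimization argument ($\mathcal{B}\subseteq\mathcal{A}$, the unique $\mathcal{A}$-minimiser lies in $\mathcal{B}$, hence it is also the $\mathcal{B}$-minimiser).
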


The advantages and disadvantages of a stable sub-volume will be discussed in Section~\ref{sec:comparison}.

\section{Implementation}\label{sec:implementation}

In this section, we discuss how to produce the stable volume using a computer.

While we can directly implement stable volumes by persistence trees 
(Definition \ref{def:sv-tree}) using the persistence trees constructed by
Algorithm~\ref{alg:volopt-hd-compute}, 
implementing stable volumes as an optimization problem is more difficult. As discussed in \cite{Chen2011,p-1-cycle}, these kinds of optimization
problems are NP-hard in general, which makes them difficult to solve on
a computer. To resolve this problem, we can apply the following techniques:
\begin{itemize}
\item Use $\R$ as a coefficient field instead of $\Z/2\Z$
\item Change $\ell^0$ norm to $\ell^1$ norm
\end{itemize}
The second technique is often used in sparse modeling. Following the above
approximations, the optimization problem can be formulated as a linear programming problem.

We first need to translate the problem into an acceptable form for a linear programming solver. 
That is, we need to specify the variables,
the objective function
(the function to be minimized), and the constraints in the following form:
\begin{itemize}
\item The objective function should be \\
  $(\mbox{a linear combination of variables}) + (\mbox{constant})$
\item Each constraint should be \\
  $(\mbox{a linear combination of variables}) + (\mbox{constant})\ \textrm{OP}\ (\mbox{another constant})$,
  where $\textrm{OP}$ is any of the relational operators $=, \geq, \leq$
\end{itemize}
We can translate this in the same way as in \cite{voc}.
\begin{align*}
  &\mbox{variables: }  \alpha_\omega, \bar{\alpha}_\omega \mbox{ for all } \omega \in \mathcal{F}_{\epsilon, k+1}\\
  &\mbox{minimize } \sum_{\omega \in \mathcal{F}_{\epsilon, k+1}} \bar{\alpha}_\omega, \mbox{ subject to}\\
  &\ \ \bar{\alpha}_\omega - \alpha_\omega \geq 0 \mbox{ for each } \omega \in \mathcal{F}_{\epsilon, k+1},\\
  &\ \ \bar{\alpha}_\omega + \alpha_\omega \geq 0 \mbox{ for each } \omega \in \mathcal{F}_{\epsilon, k+1}, \\
  &\ \ c_{\omega_0, \tau} + \sum_{\omega \in \mathcal{F}_{\epsilon, k+1} } c_{\omega, \tau} \ \alpha_\omega = 0 \mbox{ for each } \tau \in \mathcal{F}_{\epsilon, k}, \\
  &\text{where} \\
  & \ \ c_{\omega, \tau} = \tau^*(\partial \omega).
\end{align*}
It should be noted that $\R$ is used as the coefficient field and that we need to
consider the sign of $c_{\omega, \tau}$.

Section 4.2 in \cite{voc} introduced the idea of accelerating the computation of
optimal volume using locality. The same idea is applicable to
stable volumes. 

The program is already in HomCloud\cite{homcloud} and was used for the examples in Section~\ref{sec:example}.
% The following Python code gives an example of the use of a stable volume.
% \begin{lstlisting}[language=Python]
% import homcloud.interface as hc
% import numpy as np

% # Load pointcloud data from a text file
% pointcloud = np.loadtxt("pointcloud.txt")
% # Compute persistence diagrams of all degrees and save them
% hc.PDList.from_alpha_filtration(
%   pointcloud, save_boundary_map=True,
%   save_to="pointcloud.pdgm"
% )
% # Read 1st PD
% pd1 = hc.PDList("pointcloud.pdgm").dth_diagram(1)
% # Pick a birth-death pair
% pair = pd1.nearest_pair_to(0.7, 4.0)

% # Compute an optimal volume
% optimal_volume = pair.optimal_volume()
% optimal_volume.boundary_points()
% # => List of points on the boundary of the optimal volume

% # Compute a stable volume; 0.04 is the noise bandwidth parameter
% stable_volume = pair.stable_volume(0.04)
% stable_volume.boundary_points()
% # => List of points on the boundary of the stable volume

% # Compute a stable sub-volume; 0.04 is the noise bandwidth parameter
% stable_subvolume = optimal_volume.stable_subvolume(0.04)
% stable_subvolume.boundary_points()
% # => List of points on the boundary of the stable sub-volume
% \end{lstlisting}

\section{Examples}\label{sec:example}

In this section, we give examples of
stable volumes and stable sub-volumes.
Alpha filtration\cite{alpha-shape} is used to compute the persistence diagrams.

\subsection{3x3x3 lattice points}

For the example shown in Fig.~\ref{fig:crystalline},
we first produced 27 points in three-dimensional space on a lattice. Specifically,
the pointcloud here is $\{(0, 0, 0), (0, 0, 1), \ldots, (2, 2, 2)\}$. We added
a small noise to each point sampled from a uniform distribution on $(-0.05, 0.05)^3$, and
a persistence diagram was computed from the pointcloud (Fig.~\ref{fig:crystalline}(b)).
The diagram includes 28 birth-death pairs near $(1/4, 1/\sqrt{2}) \approx (0.5, 0.7)$,
each corresponding to 1x1 squares.

Figure~\ref{fig:crystalline}(c) shows two optimal cycles;
; Fig.~\ref{fig:crystalline}(d) shows two other optimal cycles.
Figure~\ref{fig:crystalline-2}(b) shows two stable volumes of the same pairs
as in Fig.~\ref{fig:crystalline}(d)
with a bandwidth parameter $\epsilon = 0.05$.

Table~\ref{tab:lattice3x3x3_stat} shows the numbers of volumes having different numbers of vertices among 
all optimal volumes and stable volumes . As indicated, twenty of the optimal volumes are squares, while eight are not.
In contrast, all the stable volumes are squares.

\begin{table}[hbtp]
  \centering
  \caption{Number of volumes having a number of vertices among all optimal volumes and stable volumes}
  \label{tab:lattice3x3x3_stat}

  \begin{tabular}{c|ccc} 
    & 4 vertices & 6 vertices & 10 vertices \\ \hline
    number of optimal volumes & 20 & 7 & 1 \\
    number of stable volumes & 28 & 0 & 0 \\
  \end{tabular}
\end{table}

We also computed stable sub-volumes in this setting and confirmed that
the stable sub-volumes coincided with the stable volumes.
% 4*9 - 8 = 28

\subsection{2D lattice with random defects}\label{subsec:lattice-2d}

For the example shown in Fig.~\ref{fig:lattice-2d},
we prepared 30x30 lattice points in a 2D space. The distance between
vertices is 1. The points were randomly removed with a probability of 0.5. The
points on the perimeter were not removed. We added
a small noise to each point sampled from a uniform distribution on $(-0.1, 0.1)^2$.
A 1st PD was computed from the pointcloud.

Figure~\ref{fig:lattice-2d}(c) shows the optimal volume of (0.496, 4.371). Figure~\ref{fig:lattice-2d}(d) shows the stable volume of the same birth-death pair
with bandwidth parameter $\epsilon = 0.12$.

We also examined the effect of changing the bandwidth parameter,
computing stable volumes for bandwidth parameter
$\epsilon = 0.0, 0.01, 0.02, \ldots, 0.40$.
Figure~\ref{fig:lattice2d-stat} shows the graph of the bandwidth parameter
versus the size of the stable volumes. Size was measured as the number of
simplices in the stable volume.

\begin{figure}[htbp]
  \centering
  \includegraphics[width=0.7\hsize]{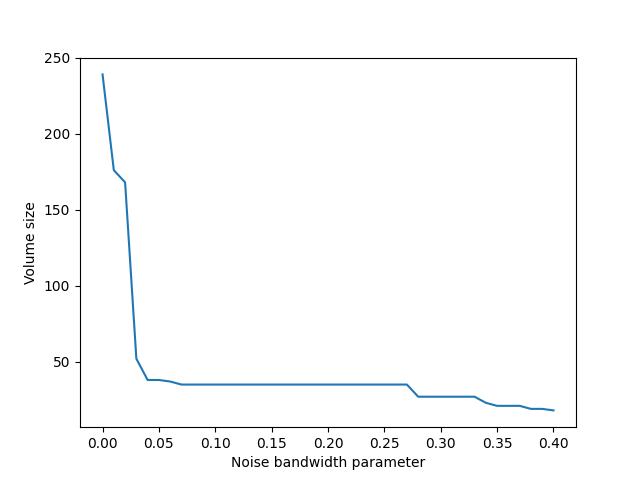}
  \caption{Bandwidth parameter versus size of stable volume}
  \label{fig:lattice2d-stat}
\end{figure}

From the optimal volume ($\epsilon = 0$), the size of the stable volume
rapidly decreases. A wide plateau appears at $\epsilon = 0.04$ and
is completely flat from $\epsilon = 0.06 $ to $\epsilon = 0.27$,
meaning that the stable volume is stable to changes in
bandwidth parameter over the range $[0.06, 0.27]$. This suggests that
$\epsilon$ should be somewhere within this range.
It should be noted that the scale of this range coincides with the scale of the noise.
This is consistent with the fact that
the bandwidth parameter indicates the strength of the virtual noise.

\subsection{Comparison with previous studies}

We apply the statistical method and reconstructed shortest cycles shortest using the same data as in Figure~\ref{fig:lattice-2d} to compare with our method.

Figure~\ref{fig:lattice-2d} shows the results of statistical method.
We use Algorithm~\ref{alg:stat} to compute these results with a random uniform 
perturbation to each point. The strenght of the noise is $[-0.03,0.03]^2$ and
$[-0.06,0.06]^2$. We put large black dots in the figure to indicate points
whose frequencies are more than 70\% or 90\%. The result is consistent with
Fig~\ref{fig:lattice-2d} and has richer information. However the result
looks more difficult to interpret than Fig~\ref{fig:lattice-2d}.

\begin{figure}[htbp]
  \centering
  \includegraphics[width=\hsize]{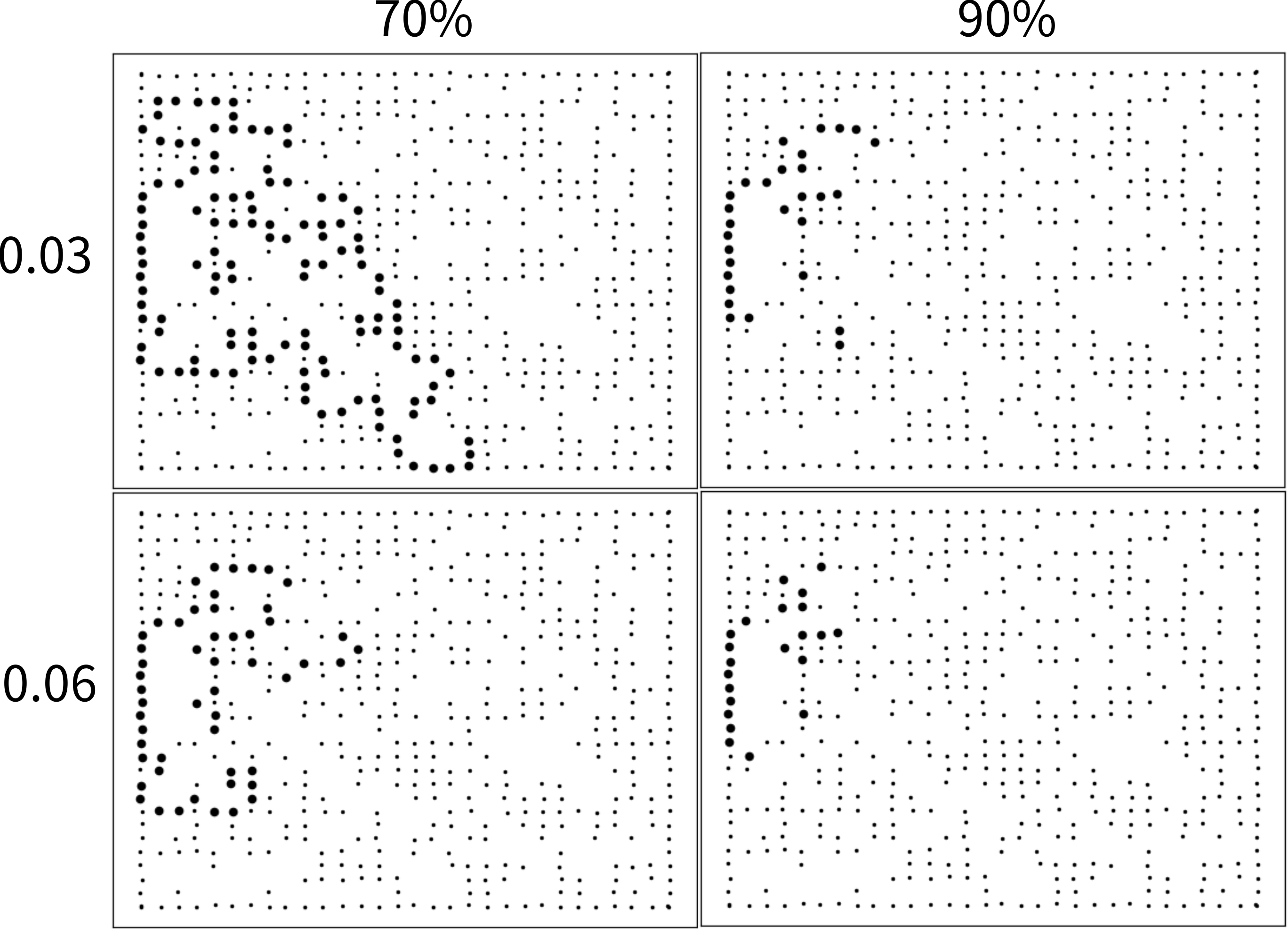}
  \caption{Result of the statistical method. Upper two figures: Additive noise from uniform distribution on $[-0.03, 0.03]^2$. Lower two figures: Additive noise from uniform distribution on $[-0.06, 0.06]^2$. The left two figures: The points with frequencies > 70\% is marked. The right two figures: The points with frequencies > 90\% is marked.}
  \label{fig:lattice2d-stat}
\end{figure}

Figure~\ref{fig:lattice2d-rsc} shows the reconstructed shortest cycles with
different noise bandwidth parameters, 0.1 (left) and 0.3 (right).
As shown in the figure, the result for 0.3 is consistent with other results but the result for 0.1 is not consistent.
This means that the shortest loop criterion sometimes gives a result inconsistent with the structure of persistent homology.

\begin{figure}[htbp]
  \centering
  \includegraphics[width=\hsize]{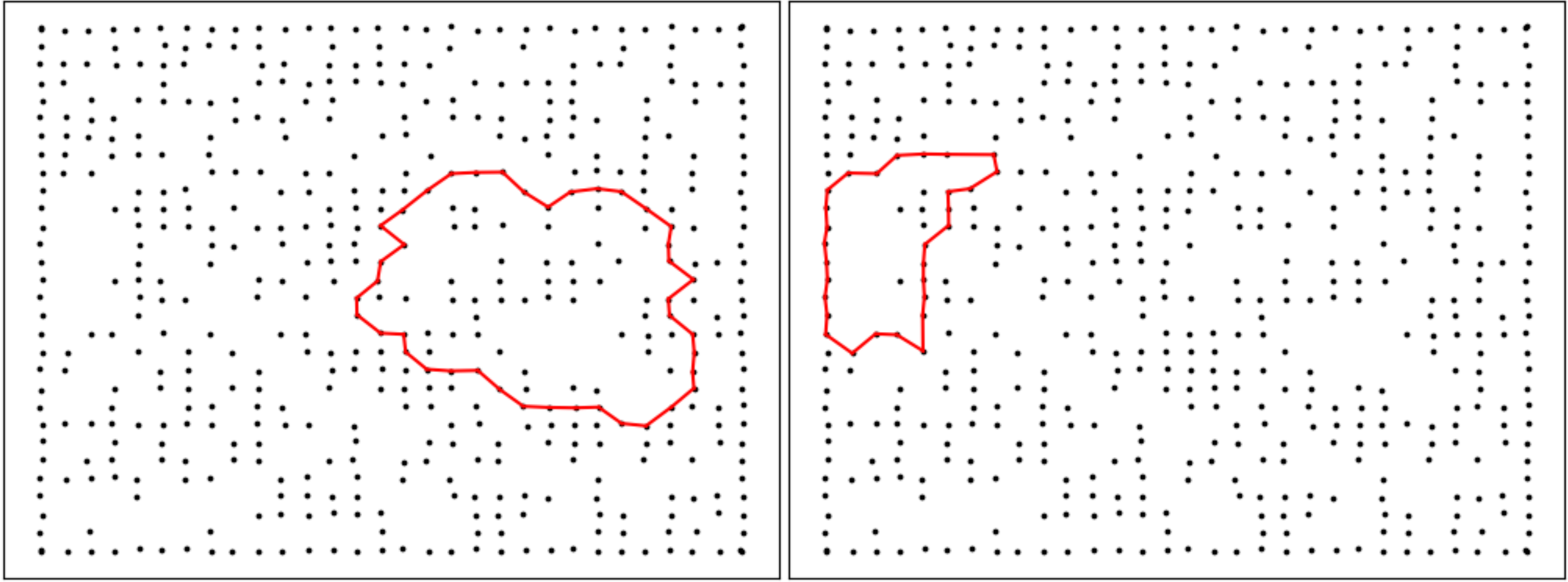}
  \caption{Result of reconstructed shortest cycles. Left: Noise bandwidth parameter is 0.1. Right: Noise bandwidth parameter is 0.3. }
  \label{fig:lattice2d-rsc}
\end{figure}

In this comparison, we also remark that the meaning of noise bandwidth parameters
used in those three methods are different, so direct comparison of the results with the same parameter is meaningless. We should focus on the changes of the results by the change of noise bandwidth parameters.

% \subsection{Large FCC crystal}

\subsection{Atomic configuration of amorphous silica}\label{subsec:sio2}

The proposed approach was applied to more realistic data. In this case, we used the atomic configuration of amorphous silica.
The data are from ISAACS\cite{isaacs},
generated by reverse Monte Carlo simulations guided by synchrotron X-ray radiation data.
The data are available at \url{http://isaacs.sourceforge.net/ex.html}.

The PDs were computed from the atomic configuration. Two types of atoms, silicons and oxygens, were mixed in a
1:2 ratio in the data. The atomic type was ignored in this example, and only the positions of the atoms were used. Figure~\ref{fig:sio2-pd} shows the 1st PD.

The PD has birth-death pairs on the vertical line with $(\text{birth time}) \approx 0.7$.
The birth-death pairs on the vertical line correspond to rings formed by the chemical bonds between oxygen and silicon. Oxygen and silicon atoms appear alternatively on these rings.
Previous studies~\cite{Hiraoka28062016, YoheiONODERA201919143} have found that 
the existence of the vertical line on a PD implies network structures.

\begin{figure}[hbtp]
  \centering
  \includegraphics[width=0.5\hsize]{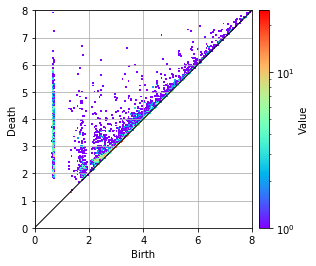}
  \caption{1st PD for amorphous silica}
  \label{fig:sio2-pd}
\end{figure}

\begin{figure}[hbtp]
  \centering
  \includegraphics[width=\hsize]{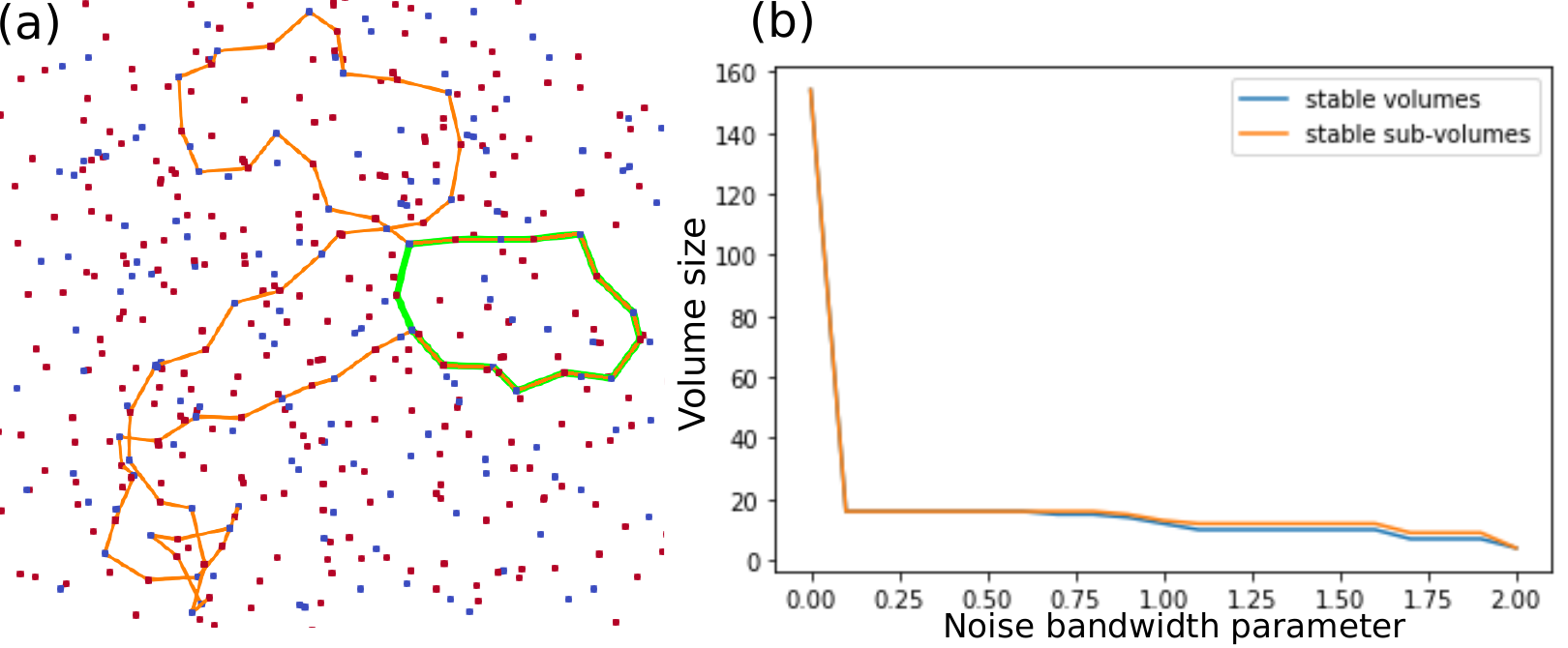}
  \caption{
    (a) Optimal volume and stable volume of (0.677, 5.007). The orange ring is the boundary of the optimal volume; the green ring is the boundary of the stable volume. The red and blue points are oxygen atoms and silicon atoms, respectively.
    (b) The plot of the bandwidth parameter $\epsilon$ versus the size of stable volumes and stable sub-volumes.
  }
  \label{fig:sio2-1}
\end{figure}

% pairs[0]
Figure~\ref{fig:sio2-1}(a) shows the boundary of the 
optimal volume (orange) and the boundary of the stable volume (green) with $\epsilon = 0.2$ of (0.677, 5.007).
The red and blue points are oxygen atoms and silicon atoms, respectively. In this case, the stable volume and stable sub-volume are identical.
The optimal volume is a large twisted ring, while the stable volume is a simpler ring. Figure~\ref{fig:sio2-1}(b) shows the
$\epsilon$ versus volume plot. The plot indicates that the size of the stable volume quickly decreases from $\epsilon=0$ to
$\epsilon=0.1$, meaning that the optimal volume is sensitive to noise. Therefore, we can consider that
the stable volume is the essential part for the birth-death pair.

It should be noted that the stable volume with $\epsilon \in [1.1, 1.6]$ on the second plateau is not a reasonable representation of the birth-death pair
since only oxygen atoms exist on the boundary of the volume. The large $\epsilon$ causes the removal of the information of the silicon atoms.

Figure~\ref{fig:sio2-2}(a) shows the optimal volume (orange) of (0.669, 5.053)
and the stable volume (green) of the same pair with $\epsilon=0.2$. In this case, the optimal volume and the stable sub-volume
are identical. However, the stable volume and sub-volume are not identical since
the stable volume is not included in the optimal volume. Figure~\ref{fig:sio2-2}(b) shows the plot of $\epsilon$ against the volume.

\begin{figure}[hbtp]
  \centering
  \includegraphics[width=\hsize]{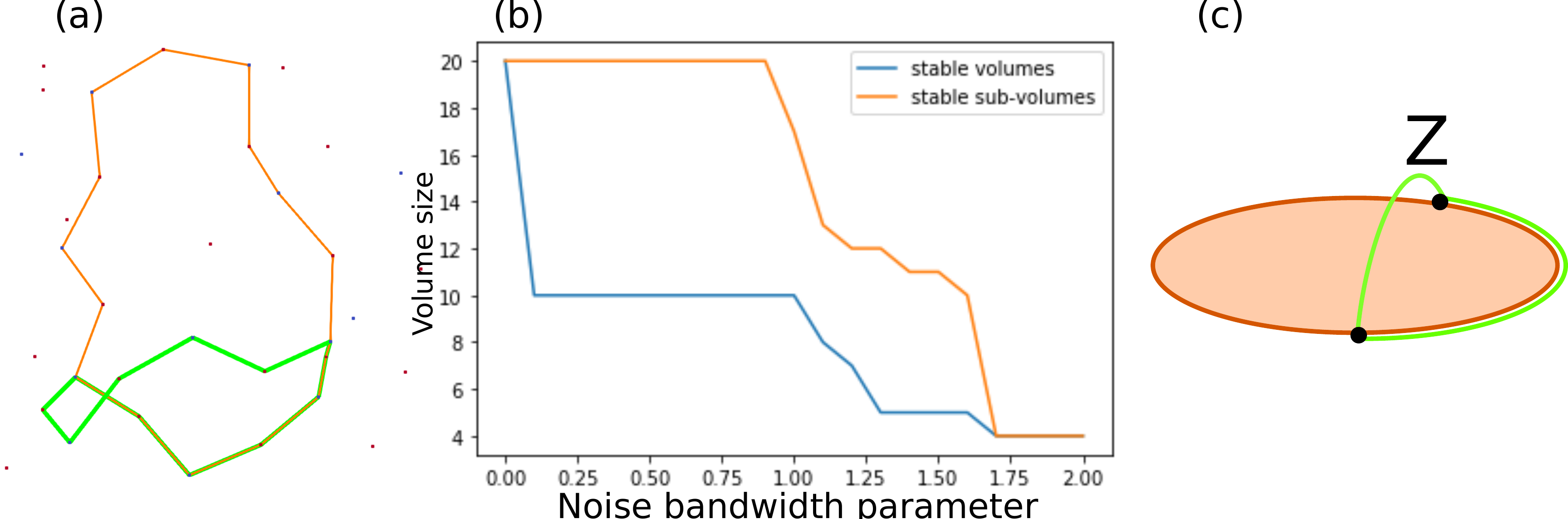}
  \caption{
    (a) Optimal volume and stable volume of (0.669, 5.053). The orange ring is the boundary of the optimal volume;
    the green ring is the boundary of the stable volume. The red and blue points are oxygen atoms and silicon atoms, respectively.
    (b) Plot of the bandwidth parameter $\epsilon$ versus the size of the stable volumes and stable sub-volumes.
    (c) Schematic of the optimal volume and the stable volume. The orange area is the optimal volume,
    the dark orange ring is its boundary, and the green ring is the boundary of the stable volume.
  }
  \label{fig:sio2-2}
\end{figure}

In Fig.~\ref{fig:sio2-2}(a), the stable volume is tighter than the stable sub-volume.
Figure~\ref{fig:sio2-2}(c) shows the schematic of the optimal volume and the stable volume.
The stable volume and sub-volume are different since path $Z$ in the figure is not included in the optimal volume. 
In our opinion, the stable volume appears superior since it surrounds the tunnel in the pointcloud more tightly.
However, we do not have a theoretical guarantee. In general, the stable volume is tighter than the stable sub-volume since
the optimization for a stable volume is more aggressive than that for a stable sub-volume.

\begin{figure}[hbtp]
  \centering
  \includegraphics[width=\hsize]{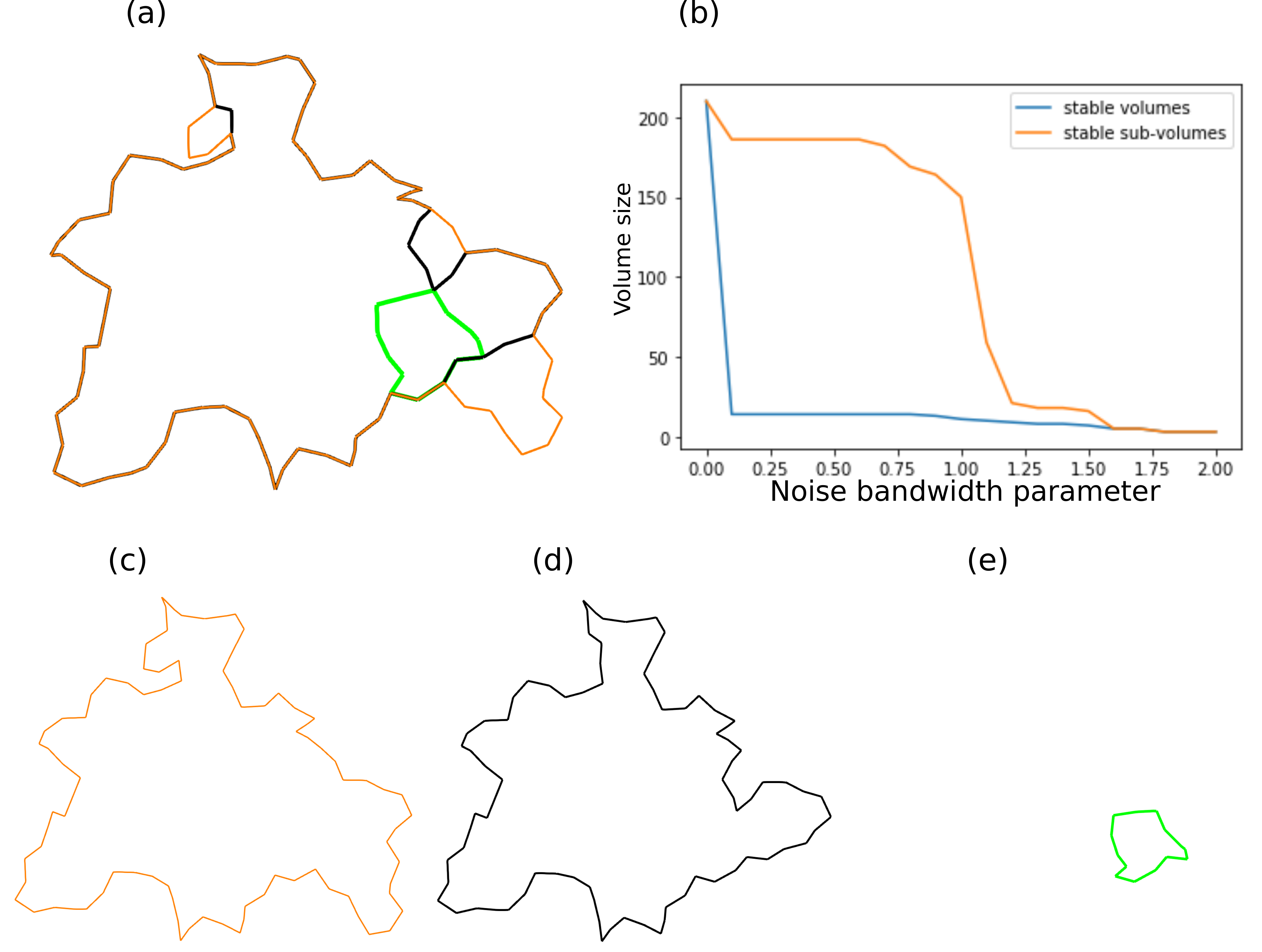}
  \caption{
    (a) Optimal volume and stable volume of (0.671, 5.063). The orange ring is the boundary of the optimal volume, 
    the green ring is the boundary of the stable volume, and the black ring is the boundary of the stable sub-volume.
    The atoms are omitted for the sake of clarity.
    (b) Plot of the bandwidth parameter $\epsilon$ versus the size of stable volumes and stable sub-volumes.
    (c, d, e) Optimal volume, stable sub-volume, and stable volume.
  } 
  \label{fig:sio2-4}
\end{figure}

Figure~\ref{fig:sio2-4}(a) shows the boundaries of the optimal volume (orange), stable volume (green), and stable sub-volume (black) of (0.671, 5.063).
The noise bandwidth parameter $\epsilon = 0.2$. In this example, the stable volume shows a very tight ring, while the stable sub-volume gives
a very conservative result.
The implication is that a stable volume gives a better result than a stable sub-volume when the optimal volume is extremely large.

\section{Tuning of noise bandwidth parameter}\label{sec:parameter-tuning}

In applying stable volumes, properly tuning the noise
bandwidth parameter $\epsilon$ is essential. The examples shown in the previous section suggest two possible approaches to such tuning:
\begin{itemize}
\item The scale of $\epsilon$ should be the same scale as
  the noise of the data
\item A value on the plateau shown in the plot of $\epsilon$ against
  volume size (Section~\ref{subsec:lattice-2d} and Fig.~\ref{fig:lattice2d-stat})
\end{itemize} should be used.

In applying the first approach, we can use the domain knowledge
about the data to tune the parameter.
Here, the scale of the system noise gives an estimate of the parameter.
For example, the scale of thermal fluctuation can be used as the parameter
if the data consist of the atomic configuration, and thermal fluctuation is the dominant noise.

To apply the second approach, we need to plot the figure.
As noted in Section~\ref{subsec:sio2}, when there are multiple plateaus, we need additional criteria to determine
which is better.

If we use stable volumes by persistence trees,
such a plot is easy to produce. From Definition~\ref{def:sv-tree}, the size
of a stable volume is computable as follows:
\begin{equation}\label{eq:stablevolume-size}
  1 +  \sum_{\omega \in C_\epsilon(\tau_0, \omega_0)} (\text{size of }\mathrm{dec}(\omega, V, E)).
\end{equation}
We can compute the size of $\mathrm{dec}(\omega, V, E)$ by counting
all the descendant elements of $\omega$. It is also easy to judge whether
$\omega \in C_\epsilon(\tau_0, \omega_0)$ by comparing
$\hat{r}(\tau_0)$ and $\hat{r}(\tau)$ in \eqref{eq:c-epsilon}.
HomCloud already has this functionality.

When we cannot use stable volumes by persistence trees and we want to plot $\epsilon$ against volume size,
a stable sub-volume is more useful than a stable volume.

\section{Comparison between stable volumes and stable sub-volumes}\label{sec:comparison}

The question of whether stable volumes or stable sub-volumes are better requires further discussion.
The differences between stable volumes and stable sub-volumes can be described as follows:
\begin{itemize}
\item If we want to compute stable volumes or stable sub-volumes using multiple bandwidth parameters,
  the computation cost of stable sub-volumes is smaller than that of stable volumes.
  Thus, stable sub-volumes are desirable for constructing $\epsilon$ versus volume plots.
\item If the size of the optimal volume is large, a stable volume often gives a more aggressive and 
  likely better result than a stable sub-volume.
\item A stable sub-volume gives a more conservative result than a stable volume.
\end{itemize}
The first listed characteristic shows the advantage of using a stable sub-volume, while the second and third characteristics imply that
the choice between stable volumes and stable sub-volumes depends on the problem. 

In the author's opinion, stable sub-volumes are easy to handle in general. However,
users should compare both options and make a decision as to which is better for their data.

\section{Concluding remarks}\label{sec:conclusion}

We have proposed the concept of stable volumes and stable sub-volumes
as a means for identifying good geometric realization of homology generators that, unlike many of the methods proposed in prior research, is robust to
noise.
The idea of stable volumes and stable sub-volumes is based on optimal volumes~\cite{voc}.

The statistical approach taken by 
Bendich, Bubenik, and Wagner~\cite{stabilization} offers another solution to
our problem. However, one advantage of stable volumes and stable sub-volumes is
that they do not require a large number of repeated computations.
Moreover, stable volumes and sub-volumes are easier to visualize than the output
of the statistical method
since they give a deterministic rather than a probabilistic description.
On the other hand, the advantage
of the statistical approach is its flexibility.
For first PH,
we can only apply the statistical approach when we want to minimize the length of a loop rather than minimize the volume. The statistical approach is also applicable to
the spatial distribution of points.
Another advantage of
the statistical approach is that it gives richer probabilistic information than stable volumes and sub-volumes.

Reconstruct shortest cycles in Ripserer.jl by Čufar\cite{cufar2020ripserer} offers the other solution.
The approach uses the representative of persistent cohomology and the shortest path algorithm.
Our proposed method has two advantages over reconstructed shortest cycles. The first is its mathematical background.
We prove Theorem~\ref{thm:stable-tree} to show the good property of stable volumes and stable sub-volumes.
Reconstruct shortest cycles do not have such mathematical justification.
The second is the difference in the scope of application.
Stable volumes and sub-volumes can apply to the PH of any degree; however, reconstructed shortest cycles are only applicable to 1st PH.
On the other hand, the advantage of reconstructed shortest cycles is computation efficiency.
The algorithm of reconstructed shortest cycles uses the shortest path algorithm and is faster than stable volumes and sub-volumes in general.

In this paper, we presented algorithms only for the filtration of
a simplicial complex;
however, the concept can be applied to other filtrations, such as cubical and
cell filtrations. The proposed algorithms should be helpful in the study of 
two-dimensional, three-dimensional, or higher-dimensional digital images.

Although we used the number of simplices as the volume size in this paper, it is possible to use the real volume by changing the objective function from $\sum_{\omega \in \mathcal{F}_{\epsilon, k+1}} \bar{\alpha}_\omega$ to $\sum_{\omega \in \mathcal{F}_{\epsilon, k+1}} \mathrm{vol}(\omega) \bar{\alpha}_\omega$, where $\mathrm{vol}(\omega)$ is the volume of the simplex $\omega$.
This idea may improve the result. % when the distribution of $\{\mathrm{vol}(\omega)\}_{\omega \in \mathcal{F}_{\epsilon, k+1}}$ is broad.

\section*{Acknowledgments}
This research is partially
supported by JSPS KAKENHI JP19H00834,
JP20H05884, JST Presto JPMJPR1923, JST CREST JPMJCR15D3,
and JST-Mirai Program JPMJMI18G3.

\bibliographystyle{plain}
\bibliography{references}

\begin{thebibliography}{10}

\bibitem{JMLR:v18:16-337}
Henry Adams, Tegan Emerson, Michael Kirby, Rachel Neville, Chris Peterson,
  Patrick Shipman, Sofya Chepushtanova, Eric Hanson, Francis Motta, and Lori
  Ziegelmeier.
\newblock Persistence images: A stable vector representation of persistent
  homology.
\newblock {\em Journal of Machine Learning Research}, 18(8):1--35, 2017.

\bibitem{Bauer2014}
Ulrich Bauer and Michael Lesnick.
\newblock Induced matchings of barcodes and the algebraic stability of
  persistence.
\newblock In {\em Proceedings of the Thirtieth Annual Symposium on
  Computational Geometry}, SOCG'14, page 355–364, New York, NY, USA, 2014.
  Association for Computing Machinery.

\bibitem{stabilization}
P.~Bendich, P.~Bubenik, and A.~Wagner.
\newblock Stabilizing the unstable output of persistent homology computations.
\newblock {\em J Appl. and Comput. Topology}, 4:309--338, 2020.

\bibitem{JMLR:v16:bubenik15a}
Peter Bubenik.
\newblock Statistical topological data analysis using persistence landscapes.
\newblock {\em Journal of Machine Learning Research}, 16(3):77--102, 2015.

\bibitem{carlsson}
Gunnar Carlsson.
\newblock Topology and data.
\newblock {\em Bull. Amer. Math. Soc.}, 46:255--308, January 2009.

\bibitem{Carlsson2008}
Gunnar Carlsson, Tigran Ishkhanov, Vin de~Silva, and Afra Zomorodian.
\newblock On the local behavior of spaces of natural images.
\newblock {\em International Journal of Computer Vision}, 76(1):1--12, Jan
  2008.

\bibitem{virus}
Joseph~Minhow Chan, Gunnar Carlsson, and Raul Rabadan.
\newblock Topology of viral evolution.
\newblock {\em Proceedings of the National Academy of Sciences},
  110(46):18566--18571, 2013.

\bibitem{Chazal2009}
Fr\'{e}d\'{e}ric Chazal, David Cohen-Steiner, Marc Glisse, Leonidas~J. Guibas,
  and Steve~Y. Oudot.
\newblock Proximity of persistence modules and their diagrams.
\newblock In {\em Proceedings of the Twenty-Fifth Annual Symposium on
  Computational Geometry}, SCG '09, page 237–246, New York, NY, USA, 2009.
  Association for Computing Machinery.

\bibitem{Chen2011}
Chao Chen and Daniel Freedman.
\newblock Hardness results for homology localization.
\newblock {\em Discrete {\&} Computational Geometry}, 45(3):425--448, Apr 2011.

\bibitem{Cohen-Steiner2007}
David Cohen-Steiner, Herbert Edelsbrunner, and John Harer.
\newblock Stability of persistence diagrams.
\newblock {\em Discrete {\&} Computational Geometry}, 37(1):103--120, Jan 2007.

\bibitem{optimal-Day}
Tamal~K. Dey, Anil~N. Hirani, and Bala Krishnamoorthy.
\newblock Optimal homologous cycles, total unimodularity, and linear
  programming.
\newblock {\em SIAM J. Comput.}, 40(4):1026--1044, July 2011.

\bibitem{p-1-cycle}
Tamal~K. Dey, Tao Hou, and Sayan Mandal.
\newblock Persistent 1-cycles: Definition, computation, and its application.
\newblock In Rebeca Marfil, Mariletty Calder{\'o}n, Fernando D{\'i}az~del
  R{\'i}o, Pedro Real, and Antonio Bandera, editors, {\em Computational
  Topology in Image Context}, pages 123--136, Cham, 2019. Springer
  International Publishing.

\bibitem{eh}
Herbert Edelsbrunner and John Harer.
\newblock {\em Computational topology: an introduction}.
\newblock American Mathematical Soc., 2010.

\bibitem{elz}
Herbert Edelsbrunner, David Letscher, and Afra Zomorodian.
\newblock Topological persistence and simplification.
\newblock {\em Discrete {\&} Computational Geometry}, 28(4):511--533, Nov 2002.

\bibitem{alpha-shape}
Herbert Edelsbrunner and Ernst~P. M\"{u}cke.
\newblock Three-dimensional alpha shapes.
\newblock {\em ACM Trans. Graph.}, 13(1):43--72, January 1994.

\bibitem{Erickson2005}
Jeff Erickson and Kim Whittlesey.
\newblock Greedy optimal homotopy and homology generators.
\newblock In {\em Proceedings of the Sixteenth Annual ACM-SIAM Symposium on
  Discrete Algorithms}, SODA '05, pages 1038--1046, Philadelphia, PA, USA,
  2005. Society for Industrial and Applied Mathematics.

\bibitem{Escolar2016}
Emerson~G. Escolar and Yasuaki Hiraoka.
\newblock {\em Optimal Cycles for Persistent Homology Via Linear Programming},
  pages 79--96.
\newblock Springer Japan, Tokyo, 2016.

\bibitem{Hiraoka28062016}
Yasuaki Hiraoka, Takenobu Nakamura, Akihiko Hirata, Emerson~G. Escolar, Kaname
  Matsue, and Yasumasa Nishiura.
\newblock Hierarchical structures of amorphous solids characterized by
  persistent homology.
\newblock {\em Proceedings of the National Academy of Sciences},
  113(26):7035--7040, 2016.

\bibitem{Hirata2020}
Akihiko Hirata, Tomohide Wada, Ippei Obayashi, and Yasuaki Hiraoka.
\newblock Structural changes during glass formation extracted by computational
  homology with machine learning.
\newblock {\em Communications Materials}, 1(1):98, Dec 2020.

\bibitem{kurlin2015one}
V.~Kurlin.
\newblock A one-dimensional homologically persistent skeleton of an
  unstructured point cloud in any metric space.
\newblock {\em Computer Graphics Forum}, 34(5):253--262, 2015.

\bibitem{JMLR:v18:17-317}
Genki Kusano, Kenji Fukumizu, and Yasuaki Hiraoka.
\newblock Kernel method for persistence diagrams via kernel embedding and
  weight factor.
\newblock {\em Journal of Machine Learning Research}, 18(189):1--41, 2018.

\bibitem{Lesnick2015}
Michael Lesnick.
\newblock The theory of the interleaving distance on multidimensional
  persistence modules.
\newblock {\em Foundations of Computational Mathematics}, 15(3):613--650, Jun
  2015.

\bibitem{voc*}
Ippei Obayashi.
\newblock Volume optimal cycle: Tightest representative cycle of a generator on
  persistent homology, 2017.
\newblock Preprint version of \cite{voc},
  \url{https://arxiv.org/abs/1712.05103}.

\bibitem{voc}
Ippei Obayashi.
\newblock Volume-optimal cycle: Tightest representative cycle of a generator in
  persistent homology.
\newblock {\em SIAM Journal on Applied Algebra and Geometry}, 2(4):508--534,
  2018.

\bibitem{homcloud}
Ippei Obayashi and {HomCloud developer team}.
\newblock Homcloud.

\bibitem{YoheiONODERA201919143}
Yohei ONODERA, Shinji KOHARA, Shuta TAHARA, Atsunobu MASUNO, Hiroyuki INOUE,
  Motoki SHIGA, Akihiko HIRATA, Koichi TSUCHIYA, Yasuaki HIRAOKA, Ippei
  OBAYASHI, Koji OHARA, Akitoshi MIZUNO, and Osami SAKATA.
\newblock Understanding diffraction patterns of glassy, liquid and amorphous
  materials via persistent homology analyses.
\newblock {\em Journal of the Ceramic Society of Japan}, 127(12):853--863,
  2019.

\bibitem{isaacs}
Sébastien~Le Rouxa and Valeri Petkova.
\newblock Isaacs -- interactive structure analysis of amorphous and crystalline
  systems.
\newblock {\em Journal of applied crystallography}, 43(1):181--185, 2010.

\bibitem{granular}
M.~Saadatfar, H.~Takeuchi, V.~Robins, N.~Francois, and Y.~Hiraoka.
\newblock Pore configuration landscape of granular crystallization.
\newblock {\em Nature Communications}, 8:15082, 2017.

\bibitem{Ben-thesis}
B.~Schweinhart.
\newblock {\em Statistical Topology of Embedded Graphs}.
\newblock PhD thesis, Princeton University, August 2015.
\newblock \url{https://web.math.princeton.edu/~bschwein/}.

\bibitem{smith2021skeletonisation}
Philip Smith and Vitaliy Kurlin.
\newblock Skeletonisation algorithms with theoretical guarantees for
  unorganised point clouds with high levels of noise.
\newblock volume 115, 2021.

\bibitem{Suzuki2021}
Anna Suzuki, Miyuki Miyazawa, James~M. Minto, Takeshi Tsuji, Ippei Obayashi,
  Yasuaki Hiraoka, and Takatoshi Ito.
\newblock Flow estimation solely from image data through persistent homology
  analysis.
\newblock {\em Scientific Reports}, 11(1):17948, Sep 2021.

\bibitem{sensor-l0-l1}
A.~Tahbaz-Salehi and A.~Jadbabaie.
\newblock Distributed coverage verification in sensor networks without location
  information.
\newblock In {\em 2008 47th IEEE Conference on Decision and Control}, pages
  4170--4176, Dec 2008.

\bibitem{zc}
Afra Zomorodian and Gunnar Carlsson.
\newblock Computing persistent homology.
\newblock {\em Discrete {\&} Computational Geometry}, 33(2):249--274, Feb 2005.

\bibitem{cufar2020ripserer}
Matija Čufar.
\newblock Ripserer.jl: flexible and efficient persistent homology computation
  in julia.
\newblock {\em Journal of Open Source Software}, 5(54):2614, 2020.

\end{thebibliography}

% \section*{Symbols}

% \begin{align*}
%   X&: \text{ simplicial complex} \\
%   |X| &= \cup_{\sigma \in X} \sigma \\
%   X^{(k)}& : \text{ all $k$-simplices in $X$} \\
%   r, p, q &: \text{ orders with levels } \\
%   r = (\hat{r}, \preceq_r) &: \text{ a pair of a level function and a total order} \\
%   \X_r &: \text{ a filtration defined by an order with levels $r$} \\
%   \bdsp_k(\X_r)&: \text{ the set of all $k$th birth-death simplices pairs }\\
%   \pd_k(X, r) &: \text{ a persistence diagram}  \\
%   (\tau_0, \omega_0)&: \text{ a birth-death simplices pair} \\
%   \sigma, \sigma_* &: \text{ any dimensional simplex } \\
%   \tau, \tau_* &: \text{ $k$-dimensional simplex }\\
%   \omega, \omega_* &:\text{ $(k+1)$-dimensional simplex }\\
%   (V, E) &: \text{ a directed graph in Algorithm} \\
%   \OV(r, \tau_0, \omega_0) &: \text{ the optimal volume of $(\tau_0, \omega_0)$} \\
%   \SV_\epsilon(r, \tau_0, \omega_0) &: \text{ the stable volume of $(\tau_0, \omega_0)$ with a bandwidth parameter $\epsilon$} \\
%   \tau_{q, \omega_0} &: \text{ the birth simplex paired with the death simplex $\omega_0$ in $\X_q$} \\
%   \OV(q, \omega_0) & = \OV(r, \tau_{q, \omega_0}, \omega_0) \\
%   \mathcal{R}_\epsilon &: \text{the set of all level functions allowed under small noise $\epsilon$}
% \end{align*}

\appendix
\section{Reconstructed shortest cycles in Ripserer.jl}\label{sec:rsc}

We illustrate the mechanism of reconstructed shortest cycles in Ripserer.jl using an example.

\begin{figure}[htbp]
  \centering
  \includegraphics[width=\hsize]{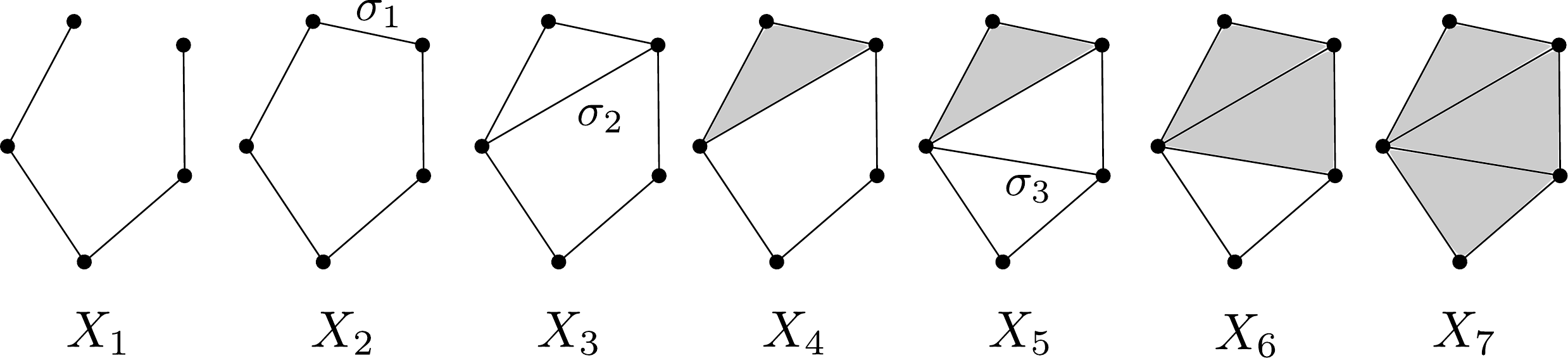}
  \caption{Example for reconstructed shortest cycles}\label{fig:rsc-example}
\end{figure}

Figure~\ref{fig:rsc-example} shows the example filtration, and we consider $\Z/2\Z$-persistent homology.
In this filtration, a homology generator appearing at $X_2$ disappears at $X_7$.
That is, $(2, 7)$ is a birth-death pair in the first persistence diagram. The representative cycle corresponding to the pair
is $\sum_{\sigma \in X_2^{(1)}} \sigma$. This representative cycle is the loop appearing at $X_2$.

Now we consider persistent cohomology. The persistence diagram defined by persistent cohomology is identical to the diagram by persistent homology since we use a field as a coefficient, but the representative \emph{cocycle} is, of course, different.
The representative cocycle corresponding to $(2, 7)$ is $\sigma_1^* + \sigma_2^* + \sigma_3^*$.

\begin{figure}[htbp]
  \centering
  \includegraphics[width=0.6\hsize]{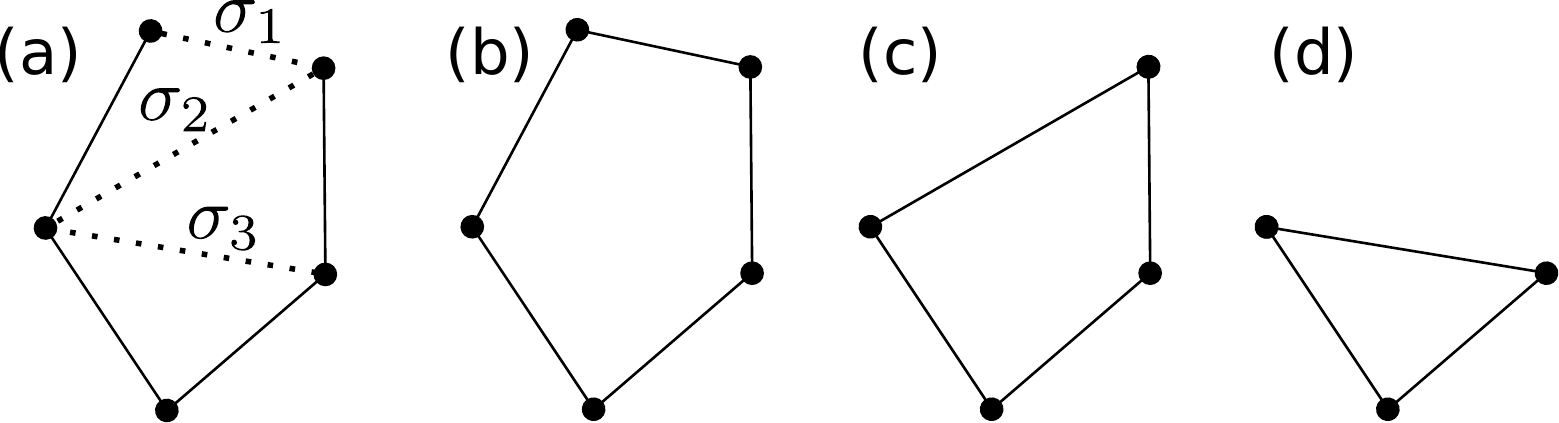}
  \caption{Representative cocycle}
  \label{fig:rsc-cocycle}
\end{figure}

From the observation of the example, we find that the representative cocycle is the ``cut'' of the loop.
This means that any representative cycle corresponding to the birth-death pair disappears if we remove all 1-simplices in the representative cocycle from
the 1-skeleton of $X_7$ as shown in Fig.~\ref{fig:rsc-cocycle}(a).

Using this fact, we can compute representative cycles using the shortest path algorithm.
Let $C$ be $\{\sigma_1, \sigma_2, \sigma_3\}$, the set of all 1-simplices in the representative cocycle.
Now $X_2^{(1)}$ is divided into two parts, one is $X_2^{(1)} \backslash C$ and another is $X_2^{(1)} \cap C = \{\sigma_1\}$.
Then we compute a shortest path in $X_4^{(1)} \backslash C$ whose two endpoints are the endpoints of $\sigma_1$.
The result is shown in Fig.~\ref{fig:rsc-cocycle}(b).

We can extend the idea to computer tighter cycles.
We can similarly divide $X_4^{(1)}$ into $X_4^{(1)} \backslash C$ and $X_4 ^{(1)} \cap C = \{\sigma_1, \sigma_2\}$.
For each $\sigma \in X_4^{(1)} \cap C$, we compute the shortest path in $X_4^{(1)} \backslash C$ whose two endpoints are the endpoints of $\sigma$.
The results are (b) and (c) in Fig.~\ref{fig:rsc-cocycle}, and we choose (c) as the shortest loop in these loops. Using $X_5$ instead of $X_4$,
we compute an even tighter loop (d). This is the mechanism of the reconstructed shortest cycles.

The algorithm is summarized as follows.
\begin{enumerate}
\item The 1st Persistence diagram $\pd_1(\X)$ for a filtration $\X: X_1 \subset \cdots \subset X_N$ is
  computed using persistent cohomology. The representative cocycles are also computed
\item We choose a birth-death pair $(b, d)$, and take the corresponding representative cocycle $\sum_{\sigma \in C} \sigma^*$
\item We also choose $k$ between $b$ and $d$
\item For each $\sigma \in X_k^{(1)} \cap C$, we compute the shortest path in $X_k^{(1)} \backslash C$, $P(\sigma)$, whose two endpoints are
  the endpoints of $\sigma$
\item We search the shortest one from $\{P(\sigma)\}_{\sigma \in C}$
\end{enumerate}
The mathematical justification of the algorithm is not yet given, but empirically it works well.
A deeper analysis of the algorithm is beyond the scope of this paper.

\end{document}